\documentclass[a4paper,11pt]{amsart}

\textheight 220mm
\textwidth 150mm 
\hoffset -16mm

\usepackage{amssymb}
\usepackage{amstext}
\usepackage{amsmath}
\usepackage{amscd}
\usepackage{amsthm}
\usepackage{amsfonts}
\usepackage{enumerate}
\usepackage{graphicx}
\usepackage{latexsym}
\usepackage[all]{xy}
\usepackage[usenames]{color}

\newtheorem*{corollary*}{Corollary}
\newtheorem{theorem}{Theorem}[section]
\newtheorem{corollary}[theorem]{Corollary}
\newtheorem{lemma}[theorem]{Lemma}
\newtheorem{proposition}[theorem]{Proposition}
\newtheorem*{proposition*}{Proposition}

\theoremstyle{definition}
\newtheorem{definition}[theorem]{Definition}

\newtheorem{example}[theorem]{Example}
\newtheorem*{notation}{Notation}

\newtheorem*{set-up}{Set-up}

\theoremstyle{remark}
\newtheorem{assumption}[theorem]{Assumption}

\numberwithin{equation}{section}
\renewcommand{\mod}{\mathsf{mod}}
\newcommand{\proj}{\mathsf{proj}}

\newcommand{\End}{\operatorname{End}}
\newcommand{\Hom}{\operatorname{Hom}}
\newcommand{\add}{\operatorname{\mathsf{add}}}


\newcommand{\T}{\mathcal{T}}
\newcommand{\Db}{\mathsf{D}^{\rm b}}
\newcommand{\Kb}{\mathsf{K}^{\rm b}}

\newcommand{\type}[1]{\mathbb{#1}}


\newcommand{\tilt}{\operatorname{tilt}}

\newcommand{\silt}{\operatorname{silt}}
\newcommand{\tsilt}{\operatorname{2-silt}}
\newcommand{\ttilt}{\operatorname{2-tilt}}
\newcommand{\sttilt}{\mbox{\rm s$\tau$-tilt}\hspace{.01in}}
\newcommand{\La}{\Lambda}

\newcommand{\inc}{\mathbf{i}}
\newcommand{\id}{{\rm id}}

\newcommand{\I}{\widetilde{I}}
\newcommand{\wLa}{\widetilde{\Lambda}}
\newcommand{\RHom}{\mathbf{R}\strut\kern-.2em\operatorname{Hom}\nolimits}
\newcommand{\REnd}{\mathbf{R}\strut\kern-.2em\operatorname{End}\nolimits}

\newcommand{\Lwotimes}{\mathop{{\otimes}^\mathbf{L}_{\widetilde{\Lambda}}}\nolimits}

\newcommand{\bmu}{{\boldsymbol \mu}}

\setcounter{tocdepth}{1}
\begin{document}
\title[tilting complexes over preprojective algebras
of Dynkin type]
{Classifying tilting complexes over preprojective algebras
of Dynkin type}

\author{Takuma Aihara}
\address{Department of Mathematics, Tokyo Gakugei University, 4-1-1 Nukuikita-machi, Koganei,Tokyo 184-8501, Japan}
\email{aihara@u-gakugei.ac.jp}
\author{Yuya Mizuno}
\thanks{{\em Key words.} preprojective algebras; tilting complexes; silting-discrete; braid groups; derived equivalences}
\thanks{The first author was partly supported by IAR Research Project,
Institute for Advanced Research, Nagoya University and by Grant-in-Aid for Young Scientists 15K17516. The second author was supported by Grant-in-Aid for Young Scientists 26800009.}
\address{Graduate School of Mathematics, Nagoya University, Furocho, Chikusaku, Nagoya 464-8602, Japan}
\email{yuya.mizuno@math.nagoya-u.ac.jp}

\begin{abstract}
We study tilting complexes over preprojective algebras of Dynkin type. 
We classify all tilting complexes by 
giving a bijection between tilting complexes and the braid group of the corresponding folded graph. 
In particular, we determine the derived equivalence class of the algebra. 
For the results, we develop the theory of silting-discrete triangulated categories and give a criterion of silting-discreteness. 
\end{abstract}
\maketitle
\tableofcontents

\section{Introduction}

\subsection{Background and motivation} 

Derived categories are nowadays considered as a fundamental object in many branches of mathematics including representation theory and algebraic geometry. 
Among others, one of the most important problems is to understand their equivalences. Derived equivalences provide a lot of interesting connections between various different objects and they are also quite useful to study structures of the categories.

It is known that derived equivalences are controlled by tilting objects (complexes) \cite{Ric,K} and therefore these constructions have been extensively studied. 
As a tool for studying tilting objects, 
Keller-Vossieck introduced the notion of silting objects (Definition \ref{def silt obj}), which is a generalization of tilting objects \cite{KV}. 
After that, it was shown that their mutation properties are much better than tilting ones and they yield a nice combinatorial description \cite{AI} (see Definition \ref{defsm}). 
Furthermore, silting objects have turned out to have deep connections with several important objects such as cluster tilting objects and $t$-structures, for example \cite{AIR,BRT,KY,BY,IJY,QW,BPP}.

One of the aim of the paper is to give a further development of the mutation theory of silting objects. 
In particular, we study a criterion when a triangulated category is \emph{silting-discrete} (Definition \ref{definition of silting-discrete}). 
A remarkable property of this class is that all silting objects are connected to each other by iterated mutation and this fact admits us to achieve a comprehensive understanding of the categories. 

Another aim of the paper is, by applying this technique, to classify all tilting complexes of \emph{preprojective algebras} of Dynkin type. 
Since preprojective algebras were introduced in \cite{GP,DR,BGL}, it turned out that they have
fundamental importance in representation theory as well as algebraic and differential geometry.
We refer to \cite{Rin} for quiver representations, \cite{L1,L2,KaS} for quantum groups,
\cite{AuR,CB} for Kleinian singularities, \cite{N1,N2,N3} for quiver varieties,
and \cite{GLS1,GLS2} for cluster algebras.

For the case of proprojective algebras of non-Dynkin type, 
its tilting theory  has been extensively studied in \cite{BIRS,IR1}. 
In particular, they show that certain ideals parameterized by the Coxeter group (see Theorem \ref{tau-weyl})  give tilting modules over the proprojective algebra and this fact provides a method for studying the derived category. 
On the other hand, in the case of Dynkin type, they are no longer tilting modules. 
Moreover, there is no spherical objects in this case and a similar nice theory had never been observed. 
In this paper, via a new strategy, we succeed to classify all tilting complexes as below.

\subsection{Our results} 
To explain our results, we give the following set-up.
Let $\Delta$ be a Dynkin graph and $\La$ the preprojective algebra of $\Delta$. 

First we study two-term tilting complexes of $\La$. 
For this purpose, we use \emph{$\tau$-tilting theory}. 
In \cite{M1}, the second author showed that the above ideals are support $\tau$-tilting $\La$-modules (Theorem \ref{tau-weyl}).
Then, combining the results of \cite{AIR}, we obtain a bijection between two-term silting complexes  of $\La$ and the Weyl group (Theorem \ref{tau-weyl}). 
Moreover we analyze this connection in more details and we can give a classification of two-term tilting complexes of $\La$ using the \emph{folded graph} $\Delta^{\rm f}$ of $\Delta$ (Definition \ref{def fold}) given by the following correspondences.

\[\begin{array}{|c|c|c|c|c|c|c|c|}
\hline                                                                       
\Delta & \type{A}_{2n-1},\type{A}_{2n}&\type{D}_{2n} &\type{D}_{2n+1} & \type{E}_6  & \type{E}_7 & \type{E}_8 \\ \hline
\Delta^{\rm f} &\type{B}_n  &\type{D}_{2n}        &\type{B}_{2n}      & \type{F}_4 & \type{E}_7  & \type{E}_8 \\ \hline
\end{array}\]

Then our first result is summarized as follows.

\begin{theorem}[Theorem \ref{action}]\label{main1} 
Let $W_{\Delta^{\rm f}}$ be the Weyl group of $\Delta^{\rm f}$ and $\ttilt\Lambda$ the set of isomorphism classes of basic two-term tilting complexes of $\Kb(\proj\La)$. 
Then we have a bijection 
$$W_{\Delta^{\rm f}}\longleftrightarrow\ttilt\Lambda.$$ 
\end{theorem}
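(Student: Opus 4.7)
My plan is to restrict the bijection of Theorem \ref{tau-weyl} from $W_\Delta \leftrightarrow \tsilt \Lambda$ to a bijection between a distinguished subgroup of $W_\Delta$ and $\ttilt \Lambda$. The preprojective algebra $\Lambda$ is self-injective, so its Nakayama functor $\nu$ is an auto-equivalence of $\Kb(\proj \Lambda)$ which preserves the silting property. The key input is Auslander--Reiten duality
$$\Hom_{\Kb(\proj \Lambda)}(T, T[-i]) \cong D\Hom_{\Kb(\proj \Lambda)}(T, \nu T[i]).$$
For a two-term silting complex $T$ with $\nu T \cong T$, this formula forces the negative $\Ext$'s of $T$ to vanish, so $T$ is tilting. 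The reverse implication, that a two-term tilting complex satisfies $\nu T \cong T$, is the crux. I would approach it by transporting the problem through the derived equivalence induced by $T$ (which identifies $\nu T$ with the $\End(T)$-bimodule $D\End(T)$), or directly via the ideal description $I_w$ of silting complexes furnished by Theorem \ref{tau-weyl}.

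Granted the equivalence ``$T$ tilting $\Leftrightarrow \nu T \cong T$'' for two-term silting complexes, I would next identify the induced action of $\nu$ on $W_\Delta$ under the bijection of Theorem \ref{tau-weyl}. The Nakayama functor permutes the indecomposable projectives of $\Lambda$ via the Nakayama permutation $\sigma$ of the simples, and $\sigma$ is precisely the diagram automorphism of $\Delta$ used to define $\Delta^{\rm f}$. I expect that $\nu I_w \cong I_{\sigma(w)}$, where $\sigma$ acts on $W_\Delta$ by $s_i \mapsto s_{\sigma(i)}$; this can be checked on Coxeter generators by computing $\nu$ on the simple summands of the tilting heart. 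Under this translation, the two-term tilting complexes correspond exactly to the fixed-point subgroup $W_\Delta^\sigma$.

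Finally, by classical folding theory of Coxeter systems, the fixed-point subgroup $W_\Delta^\sigma$ is isomorphic to the Weyl group $W_{\Delta^{\rm f}}$ of the folded diagram, which amounts to a case-by-case verification matching the table in the introduction. Combining the steps yields the desired bijection $W_{\Delta^{\rm f}} \cong W_\Delta^\sigma \longleftrightarrow \ttilt \Lambda$. The main obstacle is the converse half of the tilting-versus-$\nu$-invariance claim: while $\nu$-invariance and silting easily imply tilting via AR duality, upgrading the tilting hypothesis to an actual isomorphism $\nu T \cong T$ in $\Kb(\proj \Lambda)$ requires a rigidity argument beyond the trivial observation that $T$ and $\nu T$ induce the same derived equivalence. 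Once this rigidity is secured, the remaining translation between the module-theoretic and group-theoretic sides should follow from Theorem \ref{tau-weyl} combined with a direct computation of the Nakayama permutation on the ideals $I_w$.
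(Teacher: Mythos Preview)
Your overall strategy matches the paper's exactly: restrict the bijection $W_\Delta \leftrightarrow \tsilt\Lambda$ of Theorem~\ref{tau-weyl} to the $\nu$-fixed locus, identify this with the fixed-point subgroup $W_\Delta^\iota$ (your $W_\Delta^\sigma$), and then invoke folding (Theorem~\ref{folding}) to obtain $W_\Delta^\iota \cong W_{\Delta^{\rm f}}$.

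Two remarks on the execution. First, the equivalence ``two-term silting $T$ is tilting $\Leftrightarrow \nu T \cong T$'' that you flag as the main obstacle is not proved in the paper either; it is quoted from \cite[Appendix]{A}, where it holds for selfinjective algebras in general. So your instinct that a separate rigidity argument is required is correct, but the result is already available. Second, for the identification of the $\nu$-action on $W_\Delta$, the paper does \emph{not} check on Coxeter generators and extend as you suggest; this is delicate because $w \mapsto I_w$ is not multiplicative in any naive way, so knowing $\nu(I_i)\cong I_{\iota(i)}$ does not immediately propagate to arbitrary $w$. Instead the paper passes through $g$-vectors: one shows directly that $\nu$ acts on the $g$-matrix by conjugation with the permutation matrix $M_\iota$ (Lemma~\ref{gnu}), and separately that $g(\iota(w)) = M_\iota\, g(w)\, M_\iota$ via the contragredient geometric representation of $W_\Delta$ (Lemma~\ref{gsigma}). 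Since $g$-vectors determine support $\tau$-tilting pairs (Theorem~\ref{injection}), this yields $\nu(I_w)\cong I_{\iota(w)}$ for all $w$ without induction on length.
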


We remark that we can give not only a bijection but also an explicit description of all two-term tilting complexes (Theorem \ref{tau-weyl}). 
On the other hand, we study an important relationship between two-term silting complexes and silting-discrete categories.
More precisely, we give the following criterion of silting-discreteness (tilting-discreteness). 

\begin{theorem}[Theorem \ref{silting-discrete}, Corollary \ref{tilting-discrete}]\label{main2}
Let $A$ be a finite dimensional algebra (respectively, finite dimensional selfinjective algebra). 
The following are equivalent.
\begin{itemize}
\item[(a)] $\Kb(\proj A)$ is silting-discrete (respectively, tilting-discrete). 
\item[(b)] $\tsilt_PA$ (respectively, $\ttilt_PA$) is a finite set for any silting (respectively, tilting) complex $P$. 
\item[(c)] $\tsilt_PA$ (respectively, $\ttilt_PA$) is a finite set for any silting (respectively, tilting) complex $P$ which is given by iterated irreducible left silting (respectively, tilting) mutation from $A$.
\end{itemize}
\end{theorem}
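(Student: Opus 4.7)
The implications (a)$\Rightarrow$(b)$\Rightarrow$(c) are essentially formal. For (a)$\Rightarrow$(b), the set $\tsilt_PA$ sits inside the silting interval $[P[1],P]$, which is finite by silting-discreteness. The implication (b)$\Rightarrow$(c) is trivial since (c) restricts to a subclass of silting complexes.

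The substantive direction is (c)$\Rightarrow$(a). My plan is to prove by induction on $n\geq 1$ that the set
\[
\mathcal{S}_n := \{\, S \in \silt\Kb(\proj A) : A[n] \leq S \leq A \,\}
\]
is finite, which is the defining condition of silting-discreteness. The base case $\mathcal{S}_1 = \tsilt_A A$ is finite by (c) applied to $P=A$, which is trivially obtained from $A$ by zero irreducible left mutations.

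For the inductive step, assume $\mathcal{S}_n$ is finite. First I verify that every $P\in\mathcal{S}_n$ is reachable from $A$ by iterated irreducible left silting mutation: by \cite{AI}, the cover relations in the silting poset are precisely given by irreducible left mutations, and a maximal chain from $A$ down to $P$ lives inside the finite set $\mathcal{S}_n$, hence terminates. Hypothesis (c) then supplies finiteness of $\tsilt_P A$ for every $P\in\mathcal{S}_n$. Next I claim the covering $\mathcal{S}_{n+1} \subseteq \bigcup_{P \in \mathcal{S}_n} \tsilt_PA$: given $T\in\mathcal{S}_{n+1}$, one constructs an intermediate silting complex $P$ with $A[n]\leq P\leq A$ and $P \geq T \geq P[1]$, via a Bongartz-type completion of a presilting object assembled from $T$ and appropriate summands of $A$. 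The condition $T\geq A[n+1]$ together with the completion procedure forces $P\geq A[n]$, placing $P$ inside $\mathcal{S}_n$. Thus $\mathcal{S}_{n+1}$ is a finite union of finite sets.

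The main obstacle is precisely the existence of the completion $P$ with the required position in the silting order; this is the technical heart of the argument and depends on a careful analysis of presilting completions together with the mutation theory of \cite{AI}. Finally, the tilting version of the theorem follows from the silting version: for a finite-dimensional selfinjective algebra, every silting complex in $\Kb(\proj A)$ is tilting, so $\tsilt_PA=\ttilt_PA$ for every such $P$ and silting-discreteness coincides with tilting-discreteness.
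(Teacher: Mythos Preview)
Your inductive scaffolding for (c)$\Rightarrow$(a) is close to the paper's, but two points need correction.

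First, the covering step is not established. You assert that for $T\in\mathcal{S}_{n+1}$ one finds $P\in\mathcal{S}_n$ with $P\geq T\geq P[1]$ ``via a Bongartz-type completion of a presilting object assembled from $T$ and appropriate summands of $A$.'' The ordinary Bongartz completion (as in \cite[Proposition~2.16]{A}) only applies when the presilting object already lies in a two-term interval $A\geq T\geq A[1]$; it does not produce such a $P$ from a $T$ deep in $\mathcal{S}_{n+1}$. The paper's mechanism is different: one introduces $\nabla_{\!A}(T)=\{U\in\silt\ :\ A\geq U\geq A[1],\ U\geq T\}$, takes a \emph{minimal} element $P$ there (which exists because $\tsilt_A$ is finite), and proves via an approximation-triangle argument (Proposition~\ref{sm}) that minimality forces $T\geq P[\ell-1]$. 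Iterating this drops the length of the interval by one at each step. Your proposal identifies that this is ``the technical heart'' but does not supply the actual argument, and the tool you name does not do the job.

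Second, the reduction of the tilting case to the silting case is incorrect. It is not true that every silting complex over a selfinjective algebra is tilting: a silting complex $P$ is tilting precisely when $\nu(P)\cong P$, and for selfinjective algebras with non-trivial Nakayama permutation (e.g.\ preprojective algebras of type $\type{A}_n$, $\type{D}_{2n+1}$, $\type{E}_6$) there are many silting complexes failing this. Hence $\tsilt_PA\neq\ttilt_PA$ in general. The paper instead reruns the same argument with tilting objects and \emph{irreducible tilting mutation} in place of irreducible silting mutation, invoking the analogues of Proposition~\ref{sm} established in \cite{CKL} for the selfinjective setting; see Corollary~\ref{tilting-discrete}.
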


Here $\tsilt_PA$ (respectively, $\ttilt_PA$) denotes the subset of silting (respectively, tilting) objects $T$ in $\Kb(\proj A)$ such that $P\geq T\geq P[1]$ (Definition \ref{definition of silting-discrete}). 
An advantage of this theorem is that we can understand the condition of the all 
silting (respectively, tilting) objects by studying a certain special class of silting (respectively, tilting) objects. 
Then, we can apply Theorem \ref{main2} and obtain the following result.

\begin{theorem}[Theorem \ref{tilt discrete}, Proposition \ref{endo silt}]\label{main3} 
The endomorphism algebra of any irreducible left tilting mutation (Definition \ref{defsm}) of $\La$ is isomorphic to $\La$. 
In particular, the condition (b) of Theorem \ref{main2} is satisfied and hence $\Kb(\proj\La)$ is tilting-discrete. 
\end{theorem}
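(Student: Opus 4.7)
The plan is to establish the endomorphism isomorphism first and then deduce tilting-discreteness via Theorem~\ref{main2}. Since the preprojective algebra $\Lambda$ of a Dynkin graph is selfinjective, every silting object in $\Kb(\proj\Lambda)$ is automatically tilting, so the silting and tilting mutations coincide here and the criterion to check is condition (c) of Theorem~\ref{main2} for tilting complexes.

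First I would give an explicit description of the irreducible left tilting mutation $\mu^-_k(\Lambda)$ of $\Lambda$ at the indecomposable summand $e_k\Lambda$. By Theorem~\ref{tau-weyl}, two-term silting complexes of $\Lambda$ are parametrised by $W_\Delta$ via the ideals $I_w$, and $\mu^-_k(\Lambda)$ corresponds to the simple reflection $s_k$; it can be written as
\[
\mu^-_k(\Lambda) \;=\; \Bigl(\bigoplus_{i\neq k} e_i\Lambda\Bigr) \;\oplus\; \bigl(P^{-1}_k \xrightarrow{\ \alpha\ } e_k\Lambda\bigr),
\]
where $\alpha$ is a minimal left $\add(\bigoplus_{i\neq k} e_i\Lambda)$-approximation whose source $P^{-1}_k$ is assembled from the arrows of the double quiver of $\Delta$ incident to $k$.

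The key step, and the main obstacle, is to prove $\End_{\Kb(\proj\Lambda)}(\mu^-_k(\Lambda)) \cong \Lambda$. I would attempt this by constructing an explicit algebra morphism $\varphi\colon \Lambda \to \End(\mu^-_k(\Lambda))$ using the preprojective generators (arrows and their duals) and verifying it is bijective, either by computing dimensions of the components $e_i\End(\mu^-_k(\Lambda))e_j$ from the two-term presentation, or by producing an inverse through a ``reflection at vertex $k$'' construction in the spirit of \cite{BIRS,IR1}. The mesh relations of $\Lambda$ should allow each generator to be lifted to a chain map on the two-term complex, and the verification that this assignment respects all relations is the heart of the argument.

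Finally, granting $\End(\mu^-_k(\Lambda)) \cong \Lambda$, the rest is formal. By induction on the number of mutations, any tilting complex $P$ obtained from $\Lambda$ by iterated irreducible left tilting mutation satisfies $\End(P) \cong \Lambda$: at each stage one applies the established result to the current base algebra. The derived equivalence $\RHom_\Lambda(P,-)\colon \Kb(\proj\Lambda)\to\Kb(\proj\End(P))\simeq\Kb(\proj\Lambda)$ then identifies $\ttilt_P\Lambda$ with $\ttilt\Lambda$, which by Theorem~\ref{main1} is the finite set $W_{\Delta^{\rm f}}$ (finite because $\Delta^{\rm f}$ is again Dynkin). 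Thus condition (c) of Theorem~\ref{main2} is satisfied, and we conclude that $\Kb(\proj\Lambda)$ is tilting-discrete.
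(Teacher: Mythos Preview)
Your opening claim that ``every silting object in $\Kb(\proj\Lambda)$ is automatically tilting'' because $\Lambda$ is selfinjective is false, and this error propagates through the entire proposal. That statement holds for \emph{symmetric} algebras, but a preprojective algebra of Dynkin type is symmetric only when the Nakayama permutation $\iota$ is the identity (types $\type{D}_{2n}$, $\type{E}_7$, $\type{E}_8$). In the remaining types a silting complex $S$ is tilting if and only if $\nu(S)\cong S$ (see \cite[Appendix]{A}), and this fails for most $S$: the paper proves (Theorem~\ref{action}) that $\tsilt\Lambda$ is parametrised by $W_\Delta$ while $\ttilt\Lambda$ is parametrised by the proper subgroup $W_\Delta^\iota$. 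Consequently the irreducible left \emph{tilting} mutations of $\Lambda$ are not the single silting mutations $\mu_k^+$ at a vertex $k$, but the composites $\bmu_i^+$ which mutate along the whole $\iota$-orbit of $i$ (see the beginning of Section~\ref{end alge}). Your explicit two-term complex is therefore not the object whose endomorphism algebra must be computed, and the phrase ``silting and tilting mutations coincide here'' is exactly what fails.

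Even granting the correct object, your plan for the endomorphism computation is only an intention (``should allow'', ``the heart of the argument'') with no mechanism. The paper's route is entirely different and structural: it passes to the completed preprojective algebra $\wLa$ of the \emph{extended} Dynkin diagram, where the ideals $\I_w$ are genuine tilting $\wLa$-modules with $\End_{\wLa}(\I_w)\cong\wLa$ by \cite{BIRS}. Using $\Lambda\cong\wLa/\I_{w_0}$ and Proposition~\ref{left str}, which identifies $S_w$ with $\I_w\Lwotimes\La$, together with the commutation $\I_w\Lwotimes\La\cong\La\Lwotimes\I_w$ for $w\in W_\Delta^\iota$ (Lemma~\ref{longest}, which relies precisely on $w_0w=ww_0$ and hence on $\iota(w)=w$), one obtains $\REnd_\La(S_w)\cong\La\Lwotimes\REnd_{\wLa}(\I_w)\cong\La$ by adjunction. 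Only then does the inductive step (Corollary~\ref{cor end iso}) and the appeal to the criterion of Corollary~\ref{tilting-discrete} go through as in your final paragraph, which is the one part of your outline that matches the paper.
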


Then Theorem \ref{main3} implies that any tilting complexes are obtained from $\La$ by iterated irreducible mutation.
As a consequence of this result, we determine the derived equivalence class of $\La$ as follows.

\begin{corollary}[Theorem \ref{tilt discrete}]\label{main cor} 
Any basic tilting complex $T$ of $\La$ satisfies $\End_{\Kb(\proj\La)}(T)\cong\La$. In particular, the derived equivalence class coincides with the Morita equivalence class.
\end{corollary}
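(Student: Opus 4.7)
The plan is to combine Theorem \ref{main3} with an induction on the length of a mutation sequence, and then invoke Rickard's derived Morita theorem for the second assertion. By Theorem \ref{main3}, $\Kb(\proj\La)$ is tilting-discrete, and as recalled in the introduction this entails that every basic tilting complex is connected to $\La$ by a finite sequence of irreducible left tilting mutations. Fix a basic tilting complex $T$ together with such a chain
\[\La = T_0 \to T_1 \to \cdots \to T_n = T,\]
with $T_{i+1}$ an irreducible left tilting mutation of $T_i$.

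I would then prove $\End_{\Kb(\proj\La)}(T_i)\cong\La$ by induction on $i$. The case $i=0$ is trivial and $i=1$ is exactly the first assertion of Theorem \ref{main3}. For the inductive step, assume $\End(T_i)\cong\La$. Then $T_i$ yields a triangle equivalence
\[F_i := \RHom_\La(T_i,-) \colon \Db(\mod\La)\xrightarrow{\sim}\Db(\mod\End(T_i))\cong\Db(\mod\La)\]
which restricts to an autoequivalence of $\Kb(\proj\La)$ sending $T_i$ to the regular module. Since irreducible left tilting mutation is defined purely via minimal left approximation triangles in $\Kb(\proj-)$, the equivalence $F_i$ carries the irreducible left mutation $T_{i+1}$ of $T_i$ to an irreducible left tilting mutation of $\La$. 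Applying Theorem \ref{main3} to this image gives $\End(F_i(T_{i+1}))\cong\La$, and since $F_i$ is an equivalence this transfers back to $\End(T_{i+1})\cong\La$, closing the induction.

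For the ``in particular'' clause, suppose a finite dimensional algebra $B$ is derived equivalent to $\La$. By Rickard's theorem, $B\cong\End_{\Kb(\proj\La)}(T)$ for some basic tilting complex $T$, so the first assertion gives $B\cong\La$, and thus the derived equivalence class coincides with the Morita equivalence class.

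I expect the main obstacle to be the compatibility step: verifying that the triangle equivalence $F_i$ does transport irreducible left tilting mutations to irreducible left tilting mutations. This is standard in the Aihara--Iyama silting mutation framework, since equivalences of $\Kb(\proj-)$ preserve additive closures, minimal left $\add$-approximations, and indecomposability of summands, but the point should be spelled out (or cited precisely) to close the induction cleanly.
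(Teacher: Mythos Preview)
Your proposal is essentially the paper's own proof: the induction on mutation length via a derived autoequivalence is exactly Corollary~\ref{cor end iso}, and the compatibility of mutation with equivalences is invoked there just as you do. One small gap: tilting-discreteness guarantees that a tilting complex $T$ is reachable from $\La$ by iterated irreducible \emph{left} mutation only when $\La\geq T$; for arbitrary $T$ the paper first replaces $T$ by a shift $T[\ell]$ with $\La\geq T[\ell]$ (using \cite[Proposition 2.4]{AI}) and notes $\End(T)\cong\End(T[\ell])$, which you should add to close the argument.
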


In fact, we give a more detailed description about tilting complexes. Indeed, using 
Theorem \ref{main1} and Corollary \ref{main cor}, we can show that irreducible tilting mutation satisfy braid relations (Proposition \ref{braid relations}), which provide a nice relationship between the braid group and tilting complexes (c.f. \cite{BT,ST,G,KhS}).

Recall that the braid group $B_{\Delta^{\rm f}}$ is defined by generators $a_i$ $(i\in \Delta^{\rm f}_0)$ with relations $(a_ia_j)^{m(i,j)}=1$ for $i\neq j$ (see subsection \ref{relation} for $m(i,j)$), that is, the difference with $W_{\Delta^{\rm f}}$ is that we do not require the relations $a_i^2=1$ for $i\in \Delta^{\rm f}_0$.  
We denote by $\bmu_{i}^{+}$ (respectively, $\bmu_{i}^{-}$) the irreducible left (respectively, right) tilting mutation associated with $i\in \Delta_0^{\rm f}$.

Then we can define the map from the braid group to tilting complexes and it gives a classification of tilting complexes as follows.

\begin{theorem}[Theorem \ref{braid action}]\label{main4}
Let $B_{\Delta^{\rm f}}$ be the braid group of $\Delta^{\rm f}$ and $\tilt\La$ the set of isomorphism classes of basic tilting complexes of $\La$. Then we have a bijection 
$$B_{\Delta^{\rm f}}\longrightarrow\tilt\La,$$ 
$$\ \ \ \ \ \ \ \ \ \ \ a=a_{i_1}^{\epsilon_{i_1}}\cdots a_{i_k}^{\epsilon_{i_k}}\mapsto
\bmu_a(\La):=\bmu_{i_1}^{\epsilon_{i_1}}\circ\cdots \circ \bmu_{i_k}^{\epsilon_{i_k}}(\La).$$
\end{theorem}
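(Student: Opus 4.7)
The plan is to establish the three properties required of a bijection: well-definedness, surjectivity, and injectivity.

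For well-definedness, one must check that $\bmu_a(\La)$ depends only on the class of $a$ in $B_{\Delta^{\rm f}}$. The relations $a_ia_i^{-1}=1$ hold because $\bmu_i^+$ and $\bmu_i^-$ are mutually inverse operations on tilting complexes by construction. The nontrivial braid relations are exactly the content of Proposition \ref{braid relations}, which asserts that iterated irreducible tilting mutations of $\La$ satisfy the braid relations of $\Delta^{\rm f}$. Consequently the assignment descends to a well-defined map $\bmu\colon B_{\Delta^{\rm f}}\to\tilt\La$.

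For surjectivity, I would invoke Theorem \ref{main3}: the triangulated category $\Kb(\proj\La)$ is tilting-discrete, so every basic tilting complex is obtained from $\La$ by a finite sequence of irreducible tilting mutations. Moreover, by Corollary \ref{main cor}, every tilting complex has endomorphism algebra isomorphic to $\La$, so its indecomposable summands are canonically identified with those of $\La$ and hence indexed (after the folding) by the vertex set $\Delta^{\rm f}_0$. Therefore the mutations involved can all be written in the form $\bmu_i^{\pm}$ for $i\in\Delta^{\rm f}_0$, and any $T\in\tilt\La$ is of the form $\bmu_a(\La)$ for some word $a$.

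Injectivity is the main obstacle. The strategy I would pursue is to suppose $\bmu_a(\La)\cong\bmu_b(\La)$, set $c=ab^{-1}$, and reduce to showing $\bmu_c(\La)\cong\La \Rightarrow c=1$ in $B_{\Delta^{\rm f}}$. Since irreducible left tilting mutations strictly decrease the silting order, a positive braid $c_+=a_{i_1}\cdots a_{i_k}$ produces a strictly descending chain
\[
\La>\bmu_{i_1}^{+}(\La)>\bmu_{i_1}^{+}\bmu_{i_2}^{+}(\La)>\cdots>\bmu_{c_+}(\La).
\]
Using the Garside normal form $c=c_+c_-^{-1}$ with $c_{\pm}$ positive, one reduces to the positive case and must show that distinct positive braids yield non-isomorphic tilting complexes. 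The hard part is this rigidity at the level of positive braids; here the two-term bijection $W_{\Delta^{\rm f}}\leftrightarrow\ttilt\La$ of Theorem \ref{main1}, combined with the compatibility between the canonical projection $B_{\Delta^{\rm f}}\twoheadrightarrow W_{\Delta^{\rm f}}$ and an appropriate silting-theoretic truncation of $\bmu_a(\La)$, is what I would expect to provide the crucial inductive step, in analogy with known faithfulness results for braid group actions on derived categories of spherical objects.
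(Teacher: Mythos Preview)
Your overall architecture matches the paper exactly: well-definedness via Proposition~\ref{braid relations}, surjectivity via tilting-discreteness (Theorem~\ref{tilt discrete}), and injectivity reduced via a Garside fraction $c=c_+c_-^{-1}$ to the claim that distinct positive braids yield non-isomorphic tilting complexes. Up to this point you are reproducing the paper's Propositions~\ref{braid map} and~\ref{faithful}.

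The genuine gap is in that last claim, which you correctly flag as the hard part but do not actually prove. Your suggested mechanism---compatibility of the projection $B_{\Delta^{\rm f}}\twoheadrightarrow W_{\Delta^{\rm f}}$ with an unspecified ``silting-theoretic truncation'' of $\bmu_a(\La)$---does not work as stated: distinct positive braids can have the same Weyl image, and there is no evident truncation functor on $\tilt\La$ recovering enough data to separate them. The paper's argument (Lemma~\ref{braid inj}) instead exploits the \emph{lattice} structure that Theorem~\ref{main1} transports from $W_{\Delta^{\rm f}}$ to $\ttilt\La$. Writing $b=b'a_i$, $c=c'a_j$ with $i\neq j$ and $\bmu_b(\La)\cong\bmu_c(\La)$, one has $\bmu_b(\La)$ below both $\bmu_{a_i}(\La)$ and $\bmu_{a_j}(\La)$, hence below their meet, which by the Weyl identification is $\bmu_{a_{i,j}}(\La)$ for the longest alternating word $a_{i,j}$ in $a_i,a_j$. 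Tilting-discreteness then produces a positive $d$ with $\bmu_{da_{i,j}}(\La)\cong\bmu_b(\La)$; since $da_{i,j}a_i^{-1}$ and $da_{i,j}a_j^{-1}$ are positive and shorter, induction on length forces $b=da_{i,j}=c$. So you are right that Theorem~\ref{main1} is the key input, but its role is to identify the meet $\bmu_{a_i}(\La)\wedge\bmu_{a_j}(\La)$ explicitly, not to supply a truncation compatible with the quotient to $W_{\Delta^{\rm f}}$.
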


We now describe the organization of this paper.

In section 2, we deal with triangulated categories and study some properties of silting-discrete categories. In particular, we give a criterion of silting-discreteness. 
We also investigate a Bongartz-type lemma for silting objects.  
In section 3, we recall definitions and some results related to preprojective algebras.  
In section 4, we explain a connection between two-term silting complexes and the Weyl group. In particular, we characterize two-term tilting complexes in terms of the subgroup of the Weyl group and this observation is crucial in this paper. 
In section 5, we show that preprojective algebras of Dynkin type are tilting-discrete. It implies that any tilting complex is obtained by iterated mutation from an arbitrary tilting complex. 
In section 6, we show that there exists a map from the braid group to tilting complexes and  we prove that it is a bijection.

\begin{notation}
Throughout this paper, 
let $K$ be an algebraically closed field and $D:=\Hom_K(-,K)$.
For a finite dimensional algebra $\La$ over $K$, we denote by $\mod\Lambda$ the category of finitely generated right $\Lambda$-modules and by $\proj\La$ the category of finitely generated projective $\La$-modules. 
We denote by $\Db(\mod\La)$ the bounded derived category of $\mod\La$ and by $\Kb(\proj\Lambda)$ the bounded homotopy category of $\proj\La$. 
\end{notation}

\textbf{Acknowledgement.} 
The authors are deeply grateful to Osamu Iyama for his kind advice and helpful discussions. 
The second author thanks Kenichi Shimizu and Dong Yang for useful discussion.
He also thanks the Institute Mittag-Leffler and Nanjing normal university for the support and warm hospitality during the preparation of this paper.


\section{Silting-discrete triangulated categories}

In this section, we study silting-discrete triangulated categories. 
In particular, we give a criterion for silting-discreteness. 
Moreover we apply this theory for tilting-discrete categories for selfinjective algebras. 
We also study a relationship between silting-discrete categories and a Bongartz-type lemma.

Throughout this section, let $\T$ be a Krull-Schmidt triangulated category and 
assume that it satisfies the following property:
\begin{itemize}
\item[$\bullet$] For any object $X$ of $\T$, 
the additive closure $\add X$ is functorially finite in $\T$. 
\end{itemize}

For example, it is satisfied if $\T$ is the homotopy category of bounded complexes of finitely generated projective modules over a finite dimensional algebra, which is a main object in this paper.  
More generally, let $R$ be a complete local Noetherian ring and $\T$ an $R$-linear idempotent-complete triangulated category such that $\Hom_\T(X,Y)$ is a finitely generated $R$-module for any object $X$ and $Y$ of $\T$. Then $\T$ satisfies the above property.

\subsection{Criterions of silting-discreteness}

Let us start with recalling the definition of silting objects \cite{AI,BRT,KV}.

\begin{definition}\label{def silt obj}
\begin{itemize}
\item[(a)] We call an object $P$ in $\T$ is \emph{presilting} (respectively, \emph{pretilting})
if it satisfies $\Hom_\T(P, P[i])=0$ for any $i>0$ (respectively, $i\neq0$).
\item[(b)] We call an object $P$ in $\T$ \emph{silting} (respectively, \emph{tilting})
if it is presilting (respectively, pretilting) and the smallest thick subcategory  containing $P$ is $\T$.
\end{itemize}
We denote by $\silt\T$ (respectively, $\tilt\T$) the set of isomorphism classes of basic silting objects (respectively, tilting objects) in $\T$.

It is known that the number of non-isomorphic indecomposable summands of a silting object 
does not depend on the choice of silting objects \cite[Corollary 2.28]{AI}. 
Moreover, for objects $P$ and $Q$ of $\T$,
we write $P\geq Q$ if $\Hom_\T(P,Q[i])=0$ for any $i>0$, which gives a partial order on $\silt\T$ \cite[Theorem 2.11]{AI}.
\end{definition}

Then we give the definition of silting-discrete triangulated categories as follows. 

\begin{definition}\label{definition of silting-discrete}
\begin{itemize}
\item[(a)]
We call a triangulated category $\T$ \emph{silting-discrete}
if for any  $P\in\silt\T$ and any $\ell>0$, the set 
$$\{T\in \silt\T\ |\ P\geq T\geq P[\ell]\}$$
is finite.
Note that the property of being silting-discrete does not depend on the choice of silting objects \cite[Proposition 3.8]{A}. 
Hence 
it is equivalent to say that, for a silting object $A\in\T$ and any $\ell>0$, the set
$\{T\in \silt\T\ |\ A\geq T\geq A[\ell]\}$
is finite. 
Similarly, we call $\T$ \emph{tilting-discrete} if, for a tilting object $A\in\T$ and any $\ell>0$, the set $\{T\in \tilt\T\ |\ A\geq T\geq A[\ell] \}$ is finite. 
\item[(b)]For a silting object $P$ of $\T$,
we denote by $\tsilt_P\T$ the subset of $\silt\T$ such that $U$ with $P\geq U\geq P[1]$. 
We call $\T$ \emph{2-silting-finite} if $\tsilt_P\T$ is a finite set for any silting object $P$ of $\T$. Note that the finiteness of $\tsilt_P\T$ depends on a  silting object $P$ in general.
Similarly, we denote by $\ttilt_P\T$ the subset of $\tilt\T$ such that $U$ with $P\geq U\geq P[1]$.
\end{itemize}
\end{definition}

Moreover we recall mutation for silting objects \cite[Theorem 2.31]{AI}.

\begin{definition}\label{defsm}
Let $P$ be a basic silting object of $\T$ and decompose it as $P=X\oplus M$. 
We take a triangle
\[\xymatrix{
X \ar[r]^f & M' \ar[r] & Y \ar[r] & X[1]
}\]
with a minimal left ($\add M$)-approximation $f$ of $X$.
Then $\mu_X^+(P):=Y\oplus M$ is again a silting object, 
and we call it the \emph{left mutation} of $P$ with respect to $X$.
Dually, we define the right mutation $\mu_X^-(P)$ \footnote{
The convention of $\mu^+$ and $\mu^-$ is different from \cite{M1} in which we use the converse notation}.
Mutation will mean either left or right mutation. 
If $X$ is indecomposable, then we say that 
mutation is \emph{irreducible}. 
In this case, we have $P>\mu_X^+(P)$ and there is no silting object $Q$ satisfying $P>Q>\mu_X^+(P)$ \cite[Theorem 2.35]{AI}. 

Moreover, if $P$ and $\mu_X^+(P)$ are tilting objects, then we call it the (left) \emph{tilting mutation}. In this case, if there exists no non-trivial direct summand $X'$ of $X$ such that $\mu_{X'}^+(T)$ is tilting, then we say that tilting mutation is \emph{irreducible} (\cite[Definition 5.3]{CKL}). 
\end{definition}

We remark that all silting objects of a silting-discrete category are reachable 
by iterated irreducible mutation \cite[Corollary 3.9]{A}.

Our first aim is to show the following theorem.

\begin{theorem}\label{silting-discrete}
The following are equivalent.
\begin{itemize}
\item[(a)] $\T$ is silting-discrete.
\item[(b)] $\T$ is 2-silting-finite.
\item[(c)] For a silting object $A\in\T$, $\tsilt_P\T$ is a finite set for any silting object $P$ which is given by 
iterated irreducible left mutation from $A$.
\end{itemize}
\end{theorem}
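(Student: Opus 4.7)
The plan is to establish the cycle (a) $\Rightarrow$ (b) $\Rightarrow$ (c) $\Rightarrow$ (a). The implication (a) $\Rightarrow$ (b) follows immediately from Definition \ref{definition of silting-discrete} by specializing to $\ell = 1$, and (b) $\Rightarrow$ (c) is tautological, since the silting objects appearing in (c) form a subclass of those in (b). The only substantial content is therefore (c) $\Rightarrow$ (a).

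Assuming (c), I would fix the silting object $A$ and show by induction on $\ell \geq 1$ that the set $\mathcal{S}_\ell := \{T \in \silt\T \mid A \geq T \geq A[\ell]\}$ is finite; this suffices for silting-discreteness by the independence from the choice of silting object cited in Definition \ref{definition of silting-discrete}. The base case $\ell = 1$ is exactly $\tsilt_A\T$, which is finite by applying (c) to $P = A$ itself (reachable from $A$ by the empty chain of irreducible left mutations).

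For the inductive step, I would first observe that every element of $\mathcal{S}_{\ell - 1}$ is reachable from $A$ by iterated irreducible left mutation. This I would prove by a secondary induction: the finiteness of $\tsilt_A\T$ forces its Hasse quiver, whose arrows are precisely irreducible left mutations \cite[Theorem 2.35]{AI}, to be finite; since the finite poset $\tsilt_A\T$ has $A$ as its unique maximum and $A[1]$ as its unique minimum, a downward walk from $A$ exhausts $\tsilt_A\T$ in finitely many steps. Iterating this argument layer by layer covers $\mathcal{S}_k$ for every $k \leq \ell - 1$. Hence (c) applies to every $U \in \mathcal{S}_{\ell-1}$, giving that each $\tsilt_U\T$ is finite. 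The crucial ingredient is then the interpolation inclusion
$$\mathcal{S}_\ell \; \subseteq \; \bigcup_{U \in \mathcal{S}_{\ell-1}} \tsilt_U\T,$$
which would exhibit $\mathcal{S}_\ell$ as a finite union of finite sets. To establish it, given $T \in \mathcal{S}_\ell$ I would construct an intermediate silting object $U$ satisfying $A \geq U \geq A[\ell-1]$ together with $U \geq T \geq U[1]$, by a Bongartz-type completion of a suitable presilting object assembled from summands of $T$ and $A[\ell-1]$.

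The main obstacle I anticipate is this interpolation step: producing the intermediate $U$ inside the window $[A, A[\ell-1]]$ while simultaneously certifying $T \in \tsilt_U\T$ requires the silting-theoretic Bongartz lemma foreshadowed at the start of this section, together with careful bookkeeping for the shifted bound $U[1]$. Everything else reduces either to definitions, to \cite[Theorem 2.35]{AI}, or to the combinatorics of a finite Hasse quiver with top $A$ and bottom $A[1]$.
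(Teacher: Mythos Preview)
Your induction-on-$\ell$ framework, the easy implications, the base case, and the use of \cite[Theorem 3.5]{A} to certify reachability by iterated irreducible left mutation are all correct and match the paper.

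The gap is exactly where you flag it, and your proposed mechanism does not work. Assembling a presilting object from ``summands of $T$ and $A[\ell-1]$'' has no reason to succeed: we only know $T\geq A[\ell]$, not $T\geq A[\ell-1]$, so $\Hom_\T(T,A[\ell-1][1])$ need not vanish, and no natural combination of such summands is presilting. Even if it were, the Bongartz-type completion of Proposition~\ref{2.16} produces a silting object \emph{containing} the given presilting object as a direct summand; it does not produce a $U$ sandwiching $T$ as $U\geq T\geq U[1]$. So the Bongartz lemma is the wrong tool for this interpolation.

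The paper's mechanism is different. One introduces $\nabla_{\!A}(T)=\{U\in\silt\T\mid A\geq U\geq A[1],\ U\geq T\}$ and, using finiteness of $\tsilt_A\T$, takes a \emph{minimal} element $P$ there (Lemma~\ref{nonempty}). The key technical step is Proposition~\ref{l-1}: via the approximation-triangle machinery of Proposition~\ref{sm}, minimality of $P$ in $\nabla_{\!A}(T)$ forces $T\geq P[\ell-1]$. This yields the inclusion
\[
\{T\in\silt\T\mid A\geq T\geq A[\ell]\}\ \subseteq\ \bigcup_{P\in\tsilt_A\T}\{V\in\silt\T\mid P\geq V\geq P[\ell-1]\},
\]
which shifts the base point from $A$ to $P$ while dropping the length from $\ell$ to $\ell-1$; since each such $P$ is reachable by iterated irreducible left mutation, hypothesis~(c) applies and one iterates. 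Your inclusion $\mathcal{S}_\ell\subseteq\bigcup_{U\in\mathcal{S}_{\ell-1}}\tsilt_U\T$ is also true (this is essentially Proposition~\ref{inductive silting}), but its proof already \emph{requires} iterating Proposition~\ref{l-1}, so it is not an independent shortcut. In short: replace ``Bongartz completion'' by ``take a minimal element of $\nabla_{\!A}(T)$ and invoke Proposition~\ref{l-1}'' and your outline becomes the paper's proof.
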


We note that the theorem is different from \cite[Lemma 2.14]{QW}, where the partial order is defined by a finite sequence of tilts, while our partial order is valid  for any silting objects. 

Now we give some examples of silting-discrete categories. 

\begin{example}\label{silt example}
Let $\La$ be a finite dimensional algebra. Then $\Kb(\proj\La)$ is silting-discrete if 
\begin{itemize}
\item[(a)]$\La$ is a path algebra of Dynkin type, which immediately follows from the definition.  
\item[(b)]$\La$ is a local algebra \cite[Corollary 2.43]{AI}.
\item[(c)]$\La$ is a representation-finite symmetric algebra \cite[Theorem 5.6]{A}, which is also tilting-discrete.
\item[(d)]$\La$ is a derived discrete algebra of finite global dimension \cite[Proposition 6.8]{BPP}.
\item[(e)] $\Lambda$ is a Brauer graph algebra whose
Brauer graph contains at most one cycle of odd length and no
cycle of even length \cite{AAC}, which is also tilting-discrete.
\end{itemize}
\end{example}

For a proof of Theorem \ref{silting-discrete}, we will introduce the following terminology.

\begin{definition}\label{def:ms}
We define a subset of $\silt\T$   
\[\nabla_{\! A}(T):=\{U\in\silt\T\ |\ A\geq U\geq A[1]\ \mbox{and } U\geq T \},\]
where $A$ is a silting object and $T$ is a presilting object in $\T$ satisfying $A\geq T$.
Note that we have $T\geq A[\ell]$ for some $\ell\geq 0$ \cite[Proposition 2.4]{AI}. 

Moreover, we say that a silting object $P$ is \emph{minimal} in $\nabla_{\! A}(T)$ 
if it is a minimal element in the partially ordered set $\nabla_{\! A}(T)$.
\end{definition}

To keep this notation, we will make the following assumption.

\begin{assumption}
In the rest of this section, we always assume that 
 $\T$ admits a silting object $A$ and a presilting object $T$ in $\T$ satisfying $A\geq T$.
\end{assumption}

Then we give the following key proposition. 

\begin{proposition}\label{l-1}
If a silting object $P$ is minimal in $\nabla_{\! A}(T)$ and $T\geq A[\ell]$ for some $\ell>0$,
then we have $T\geq P[\ell-1]$.  
\end{proposition}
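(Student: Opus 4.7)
The plan is to argue by contradiction via silting mutation. Suppose $T \not\geq P[\ell-1]$, so $\Hom_\T(T, P[k]) \neq 0$ for some $k \geq \ell$, and pick an indecomposable direct summand $X$ of $P$ with $\Hom_\T(T, X[k]) \neq 0$. Decomposing $P = X \oplus M$, consider the irreducible left mutation $P' := \mu_X^+(P) = Y \oplus M$ attached to the exchange triangle
\[ X \xrightarrow{f} M' \longrightarrow Y \longrightarrow X[1], \]
where $f$ is a minimal left $(\add M)$-approximation. Then $P'$ is silting with $P > P'$, and if one shows $P' \in \nabla_{\! A}(T)$, the minimality of $P$ is contradicted and the proposition follows.

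Verifying the three conditions defining $\nabla_{\! A}(T)$ proceeds as follows. The inequality $A \geq P'$ is immediate from $A \geq P > P'$. For $P' \geq A[1]$, one checks $\Hom_\T(Y, A[j]) = 0$ for $j \geq 2$ by applying $\Hom_\T(-, A[j])$ to the exchange triangle; both surrounding groups $\Hom_\T(X, A[j-1])$ and $\Hom_\T(M', A[j])$ vanish because $P \geq A[1]$ and $j \geq 2$. For $P' \geq T$, the same long-exact-sequence argument, using $P \geq T$, kills $\Hom_\T(Y, T[j])$ for all $j \geq 2$ routinely.

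The single remaining step, and the main obstacle, is the case $j = 1$ of $P' \geq T$. Applying $\Hom_\T(-, T[1])$ to the exchange triangle and using that $M'$ and $X$ are summands of $P \geq T$ produces the exact sequence
\[ \Hom_\T(M', T) \xrightarrow{f^{\ast}} \Hom_\T(X, T) \longrightarrow \Hom_\T(Y, T[1]) \longrightarrow 0, \]
so what one must show is that every morphism $X \to T$ factors through the approximation $f$. The strategy I would pursue is to exploit the presilting property of $P$: any $h \colon X \to T$ composed with the chosen nonzero map $g \colon T \to X[k]$ gives an element of $\Hom_\T(X, X[k]) = 0$, so $h$ is forced to factor through the fiber of $g$; combined with the minimality of $f$ and the hypotheses $A \geq T \geq A[\ell]$, this should yield a factorization of $h$ through an object of $\add M$. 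Once this is established, $P' \in \nabla_{\! A}(T)$ with $P' < P$, contradicting minimality, and the proposition is proved.
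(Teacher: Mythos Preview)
Your argument has two genuine gaps, and both are of the same nature: the ``boundary'' case of the long exact sequence does not vanish for free.

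First, your verification of $P'\geq A[1]$ is incorrect at $j=2$. You claim $\Hom_\T(X,A[j-1])=0$ for all $j\geq 2$ from $P\geq A[1]$, but the latter only yields $\Hom_\T(P,A[m])=0$ for $m\geq 2$; at $j=2$ you would need $\Hom_\T(X,A[1])=0$, which is not available. So $\Hom_\T(Y,A[2])$ is a quotient of $\Hom_\T(X,A[1])$ and need not vanish. Whether $\mu_X^+(P)\geq A[1]$ holds depends on the particular summand $X$, and your choice of $X$ (via $\Hom_\T(T,X[k])\neq 0$) is tied to $T$, not to $A[1]$.

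Second, for $P'\geq T$ at $j=1$ you correctly identify the obstruction, but the proposed fix does not work as stated. Knowing $g\circ h=0$ only factors $h$ through the cocone $C$ of a \emph{single} nonzero map $g:T\to X[k]$; the object $C$ has no evident relation to $\add M$, and there is no reason a factorisation $X\to C\to T$ should descend to one through the approximation $f:X\to M'$. One would need that \emph{every} composite $X\to T\to X[k]$ vanishes for some structural reason tied to $M$, and the ingredients you list do not provide this.

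The paper closes both gaps simultaneously by working with the minimal $(\add P)$-resolutions of $T$ and of $A[1]$ (Proposition~\ref{sm}(a)). The key structural input is Proposition~\ref{sm}(b): if $X$ lies in the top term $P_\ell$ of the resolution of $T$, then $\mu_X^+(P)\geq T$; and if $X$ lies in the top term $Q_1$ of the resolution of $A[1]$, then $\mu_X^+(P)\geq A[1]$. The heart of the proof is then to show $P_\ell\in\add Q_1$, which follows from $\Hom_\T(T,A[\ell+1])=0$ via \cite[Lemma~2.25]{AI}. Thus any nonzero indecomposable summand of $P_\ell$ gives a mutation violating minimality, forcing $P_\ell=0$ and hence $T\geq P[\ell-1]$. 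Your overall shape (mutate and contradict minimality) matches the paper's, but the delicate point is that the summand $X$ must be chosen inside $\add P_\ell\subseteq\add Q_1$, and the verification that the mutated object still dominates $T$ and $A[1]$ is the content of Proposition~\ref{sm}(b), not a routine long-exact-sequence computation.
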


For a proof, we recall the following proposition. See \cite[Proposition 2.23, 2.24, 2.36]{AI} and \cite[Proposition 2.12]{A}.

\begin{proposition}\label{sm}
Let $P$ be a  silting object of $\T$. Then the following hold.
\begin{itemize}
\item[(a)] There exists $\ell\geq 0$ such that $P\geq T\geq P[\ell]$ if and only if there exist triangles 
\[\xymatrix@R=0.3cm{
T_1 \ar[r] & P_0 \ar[r]^(0.4){f_0} & T_0:=T \ar[r] & T_1[1], \\
&\cdots,\\
T_{\ell-1} \ar[r] & P_{\ell-2} \ar[r]^{f_{\ell-2}} & T_{\ell-2} \ar[r] & T_{\ell-1}[1], \\
T_\ell \ar[r] & P_{\ell-1} \ar[r]^{f_{\ell-1}} & T_{\ell-1} \ar[r] & P_\ell[1],\\ 
0 \ar[r] & P_\ell \ar[r]^{f_\ell} & T_\ell \ar[r] & 0,
}\]
where $f_i$ is a minimal right $(\add P)$-approximation of $T_i$ for $0\leq i\leq \ell$. 

\item[(b)] In the situation of $(a)$, if $\ell\neq 0$, then there is a non-zero direct summand $X\in\add(P_\ell)$ such that the irreducible left mutation $\mu_X^+(P)\geq T$. 
\end{itemize}
\end{proposition}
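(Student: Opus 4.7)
The plan is to argue by contradiction: assuming $T\not\geq P[\ell-1]$, I would produce an element of $\nabla_{\!A}(T)$ strictly below $P$, which would contradict the minimality of $P$.

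First, I would extract the ``$2$-term'' form of $P$. Since $A\geq P\geq A[1]$, Proposition~\ref{sm}(a) applied to the silting object $A$ and the presilting object $P$ (at length $1$) yields a triangle
\[
A_1\longrightarrow A_0\longrightarrow P\longrightarrow A_1[1]
\]
with $A_0,A_1\in\add A$. Applying $\Hom_\T(T,-)$ to the shifts of this triangle and using that $T\geq A[\ell]$ forces $\Hom_\T(T,A[m])=0$ for every $m>\ell$, the long exact sequence gives $\Hom_\T(T,P[j])=0$ for all $j>\ell$, i.e.\ $T\geq P[\ell]$. Let $k\geq 0$ be the least integer with $T\geq P[k]$; then $k\leq\ell$, and if $k\leq\ell-1$ then $T\geq P[\ell-1]$ follows directly from the definition of $\geq$. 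So it remains to rule out $k=\ell$.

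Suppose for contradiction $k=\ell\geq 1$. Proposition~\ref{sm}(b) then produces a non-zero summand $X\in\add P_\ell$ of the last term in the $(\add P)$-resolution of $T$ such that $P':=\mu_X^+(P)$ still satisfies $P'\geq T$, and by definition of irreducible left mutation $P'<P$. Applying $\Hom_\T(A,-)$ to the mutation triangle $X\to M'\to Y\to X[1]$ with $X,M'\in\add P$, together with $A\geq P$, shows at once that $A\geq P'$. To contradict the minimality of $P$ it is now enough to check $P'\geq A[1]$, for then $P'\in\nabla_{\!A}(T)$ with $P'<P$.

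The main obstacle is this last inequality $P'\geq A[1]$, i.e.\ that the mutation stays inside the $2$-silting interval $[A[1],A]$. A direct analysis of the mutation triangle, combined with $P\geq A[1]$, readily gives $\Hom_\T(Y,A[i])=0$ for $i\geq 3$, while the case $i=2$ reduces to controlling $\Hom_\T(X,A[1])$; the latter is not automatic for an arbitrary summand of $P$. The hypothesis $\ell\geq 1$ must then be used to show that the particular summand $X$ furnished by Proposition~\ref{sm}(b) --- a summand of $P_\ell$ arising from a genuinely non-trivial step of the resolution --- is of the appropriate ``non-shifted'' type in the $\tau$-tilting / 2-term picture, so that left silting mutation at $X$ preserves the $2$-silting interval. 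This is the key step on which the whole argument hinges.
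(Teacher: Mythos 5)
Your proposal does not actually address the statement you were asked to prove. Proposition \ref{sm} asserts (a) the existence of the tower of triangles built from minimal right $(\add P)$-approximations whenever $P\geq T\geq P[\ell]$, and (b) that for $\ell\neq 0$ the last term $P_\ell$ admits a non-zero summand $X$ with $\mu_X^+(P)\geq T$. What you have written is instead an outline of the proof of Proposition \ref{l-1} (that a minimal element $P$ of $\nabla_{\! A}(T)$ satisfies $T\geq P[\ell-1]$), and it explicitly invokes both parts (a) and (b) of Proposition \ref{sm} as known tools. As a proof of Proposition \ref{sm} it is therefore circular: nothing in your text constructs the approximation triangles or identifies the summand $X$. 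In the paper this proposition is not proved but recalled from \cite[Propositions 2.23, 2.24, 2.36]{AI} and \cite[Proposition 2.12]{A}; a self-contained argument would have to build the triangles $T_{i+1}\to P_i\xrightarrow{f_i} T_i\to T_{i+1}[1]$ inductively from minimal right $(\add P)$-approximations, verify via the vanishing $\Hom_\T(P,T_i[j])=0$ for $j>0$ that the process terminates at stage $\ell$ with $T_\ell\in\add P$, and, for (b), analyze the connecting maps to locate the summand $X$ of $P_\ell$ whose left mutation still dominates $T$. None of that appears in your sketch.

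Even read as a proof of Proposition \ref{l-1}, your argument stops exactly at the decisive point: you acknowledge that $P'=\mu_X^+(P)\geq A[1]$ is ``the key step on which the whole argument hinges'' but do not establish it. The paper closes this gap by forming the approximation triangle $Q_1\to Q_0\to A[1]\to Q_1[1]$, using $\Hom_\T(T,A[1+\ell])=0$ together with \cite[Lemma 2.25]{AI} and \cite[Theorem 2.18]{AI} to show $P_\ell\in\add Q_1$, and then applying Proposition \ref{sm}(b) to that triangle to conclude $\mu_X^+(P)\geq A[1]$. You would need to supply this (or an equivalent) argument, and in any case you would still owe a proof of Proposition \ref{sm} itself.
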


Using Proposition \ref{sm}, we give a proof of Proposition \ref{l-1}.

\begin{proof}[Proof of Proposition \ref{l-1}]
Since $P$ is minimal in $\nabla_{\! A}(T)$, we have 
$P\geq T\geq A[\ell]\geq P[\ell]$. 
Then, by Proposition \ref{sm} (a), 
there exist triangles 
\[\xymatrix@R=0.3cm{
T_1 \ar[r] & P_0 \ar[r]^(0.4){f_0} & T_0:=T \ar[r] & T_1[1], \\
&\cdots,\\
T_{\ell-1} \ar[r] & P_{\ell-2} \ar[r]^{f_{\ell-2}} & T_{\ell-2} \ar[r] & T_{\ell-1}[1], \\
T_\ell \ar[r] & P_{\ell-1} \ar[r]^{f_{\ell-1}} & T_{\ell-1} \ar[r] & P_\ell[1],\\
0 \ar[r] & P_\ell \ar[r]^{f_\ell} & T_\ell \ar[r] & 0,
}\]
where $f_i$ is a minimal right ($\add P$)-approximation of $T_i$ for $0\leq i\leq\ell$. 

Similarly, since we have $P\geq A[1]\geq P[1]$,  
there is a triangle 
\begin{equation}\label{a[1]}
\xymatrix@R=0.3cm{
Q_1 \ar[r] & Q_0 \ar[r]^{f} & A[1] \ar[r] & Q_1[1],
}\end{equation}
where $f$ is a minimal right ($\add P$)-approximation of $A[1]$ and $Q_1\in\add P$.

(i) We show that $P_\ell$ belongs to $\add Q_1$.
First, we have $\Hom_\T(T,A[1+\ell])=0$ by the definition of $T\geq A[\ell]$. 
Hence it follows from \cite[Lemma 2.25]{AI} that $\left(\add P_\ell\right)\cap\left(\add Q_0\right)=0$.

On the other hand, since $A[1]$ is a silting object, we find out that $Q_0\oplus Q_1$ is also a silting object by the sequence (\ref{a[1]}). 
From \cite[Theorem 2.18]{AI}, it is observed that $\add P=\add(Q_0\oplus Q_1)$ and hence $P_\ell$ belongs to $\add Q_1$.

(ii) We show that $T\geq P[\ell-1]$.
Suppose that $P_\ell\neq0$. Then we can take a direct summand $X\neq 0$ of $P_\ell$ such that $\mu_X^+(P)\geq T$ from Proposition \ref{sm} (b).

On the other hand, (i) implies that $X$ belongs to $\add Q_1$.
Since $P\geq A[1]\geq P[1]$, by applying Proposition \ref{sm} (b) to the sequence (\ref{a[1]}),
we see that $\mu_X^+(P)\geq A[1]$. 
Thus, one gets a silting object $\mu_X^+(P)$ such that $P>\mu_X^+(P)\geq A[1]$ satisfying $\mu_X^+(P)\geq T$, 
which is a contradiction to the minimality of $P$.
Therefore, we conclude that $P_\ell=0$. 
Hence we get $T\geq P[\ell-1]$ by Proposition \ref{sm} (a).
\end{proof}

On the other hand, we can easily check the following lemma.

\begin{lemma}\label{nonempty}
Let $A$ be a silting object. 
If $\tsilt_A\T$ is a finite set, 
then there exists a minimal element in $\nabla_{\! A}(T)$.
\end{lemma}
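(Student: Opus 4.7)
The plan is to observe that this lemma is essentially a repackaging of the hypothesis, and reduces to two very short checks: non-emptiness of $\nabla_{\! A}(T)$, and the trivial fact that any non-empty finite poset has a minimal element.

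First, I would verify that $A$ itself lies in $\nabla_{\! A}(T)$. Since $A$ is silting, $\Hom_\T(A, A[i]) = 0$ for every $i > 0$, which yields simultaneously $A \geq A$ and $A \geq A[1]$ (the latter because $\Hom_\T(A, A[1+i]) = 0$ for all $i > 0$). Together with the standing assumption $A \geq T$, this places $A$ inside $\nabla_{\! A}(T)$; in particular, the set is non-empty.

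Next, straight from the definitions one has the inclusion $\nabla_{\! A}(T) \subseteq \tsilt_A\T$, since the defining condition $A \geq U \geq A[1]$ is common to both sets. By hypothesis $\tsilt_A\T$ is finite, so $\nabla_{\! A}(T)$ is a non-empty finite subset of the partially ordered set $\silt\T$, and every such subset admits a minimal element.

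There is really no obstacle to overcome in this lemma; it is a bookkeeping statement isolating the finiteness input needed to feed Proposition \ref{l-1}. The substantive work of the section sits in Proposition \ref{l-1} (and its eventual iteration to prove Theorem \ref{silting-discrete}), not here.
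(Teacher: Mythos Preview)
Your proof is correct and is exactly the argument the paper has in mind; indeed the paper omits the proof entirely, introducing the lemma with ``we can easily check the following lemma,'' and your two observations (that $A\in\nabla_{\! A}(T)$ and that $\nabla_{\! A}(T)\subseteq\tsilt_A\T$) are the intended easy check.
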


Then we give a proof of Theorem \ref{silting-discrete}, which provides a criterion of silting-discreteness.

\begin{proof}[Proof of Theorem \ref{silting-discrete}]
It is obvious that the implications (a)$\Rightarrow$(b)$\Rightarrow$(c) hold.

We show that the implication (c)$\Rightarrow$(a) holds.
Let $T$ be a silting object such that $A\geq T\geq A[\ell]$ for some $\ell>0$.
Since $\tsilt_A\T$ is a finite set,
there exists a minimal object $P$ in $\nabla_{\! A}(T)$. 
Hence we get $P\geq T\geq P[\ell-1]$ by Proposition \ref{sm}.

Thus, one obtains
\[\{T\in\silt\T\ |\ A\geq T\geq A[\ell] \}\subseteq \bigcup_{
P\in\tsilt_A\T
}
\{U\in\silt\T\ |\ P\geq U\geq P[\ell-1] \}.\]
By \cite[Theorem 3.5]{A}, the finiteness of $\tsilt_A\T$ implies that
$P$ can be obtained from $A$ by iterated irreducible left mutation.
Therefore, our assumption yields that $\tsilt_P\T$ is also a finite set.
Repeating this argument leads to the assertion.
\end{proof}

Moreover, using an analogous statement of Proposition \ref{sm} (see \cite[section 5]{CKL}), we give 
a criterion for tilting-discreteness for selfinjective algebras as follows.

\begin{corollary}\label{tilting-discrete}
Let $\La$ be a basic finite dimensional selfinjective algebra and $\T:=\Kb(\proj\Lambda)$.
Then the following are equivalent. 
\begin{itemize}
\item[(a)] $\T$ is tilting-discrete.
\item[(b)] $\T$ is 2-tilting-finite.
\item[(c)] $\ttilt_P\T$ is a finite set for any tilting object $P$ which is given by 
iterated irreducible left tilting mutation from $\La$.
\end{itemize}
\end{corollary}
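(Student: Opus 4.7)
The plan is to run the proof of Theorem~\ref{silting-discrete} almost verbatim in the tilting setting. The only new ingredient needed is a tilting analogue of Proposition~\ref{sm}, which is provided for selfinjective algebras in \cite[Section 5]{CKL}; the crucial content there is that an irreducible left mutation of a tilting complex over a selfinjective algebra is again tilting (not merely silting), so that the approximation triangles used in Proposition~\ref{sm}(a)(b) can be read inside $\tilt\T$ rather than just $\silt\T$.

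The implications (a)$\Rightarrow$(b)$\Rightarrow$(c) are formal. For (a)$\Rightarrow$(b) one invokes the tilting analogue of \cite[Proposition 3.8]{A}, so that tilting-discreteness is independent of the chosen base tilting object and therefore implies that $\ttilt_P\T$ is finite for every tilting $P$. The implication (b)$\Rightarrow$(c) is tautological.

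For (c)$\Rightarrow$(a), first establish the tilting analogue of Proposition~\ref{l-1}: given a tilting object $A$, a pretilting object $T$ with $A \geq T \geq A[\ell]$ for some $\ell > 0$, and a minimal element $P$ of
\[
\nabla^{\mathrm{t}}_{\! A}(T) := \{U \in \tilt\T \mid A \geq U \geq A[1] \text{ and } U \geq T\},
\]
one has $T \geq P[\ell-1]$. The proof is a verbatim transcription of that of Proposition~\ref{l-1}: the tilting version of Proposition~\ref{sm}(a) produces the tower of approximation triangles terminating in a top piece $P_\ell$; comparing with the approximation triangle for $A[1]$ forces $P_\ell \in \add Q_1$; if $P_\ell \neq 0$, the tilting version of Proposition~\ref{sm}(b) yields a direct summand $X$ of $P_\ell$ such that $\mu_X^+(P)$ is a \emph{tilting} object satisfying both $\mu_X^+(P) \geq T$ and $\mu_X^+(P) \geq A[1]$, contradicting the minimality of $P$ in $\nabla^{\mathrm{t}}_{\! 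A}(T)$.

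With this analogue in hand, the induction in the proof of Theorem~\ref{silting-discrete} transfers directly: the tilting version of Lemma~\ref{nonempty} produces a minimal $P \in \nabla^{\mathrm{t}}_{\! A}(T)$ whenever $\ttilt_A\T$ is finite, giving the inclusion
\[
\{T \in \tilt\T \mid A \geq T \geq A[\ell]\} \subseteq \bigcup_{P \in \ttilt_A\T} \{U \in \tilt\T \mid P \geq U \geq P[\ell-1]\},
\]
and the tilting analogue of \cite[Theorem 3.5]{A} ensures each such $P$ is reachable from $\La$ by iterated irreducible left tilting mutation, so hypothesis (c) applies and induction on $\ell$ closes the argument. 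The main obstacle will be the careful verification that selfinjectivity preserves the tilting property under every mutation invoked along the way; once the results of \cite[Section 5]{CKL} are cited for this, the rest of the proof is a mechanical translation of the silting case.
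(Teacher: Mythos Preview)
Your proposal is correct and follows essentially the same approach as the paper's proof: reduce to a tilting version of Proposition~\ref{l-1} by invoking the tilting analogue of Proposition~\ref{sm} supplied in \cite[Section 5]{CKL}, then run the same interval-shrinking induction as in Theorem~\ref{silting-discrete}. The paper cites the reachability step as \cite[Theorem 5.11]{CKL} rather than a tilting analogue of \cite[Theorem 3.5]{A}, but this is the same content.
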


\begin{proof}
It is obvious that the implications (a)$\Rightarrow$(b)$\Rightarrow$(c) hold.

We show that the implication (c)$\Rightarrow$(a) holds.
Let $T$ be a tilting object such that $\La\geq T\geq \La[\ell]$ for some $\ell>0$.
Since $\ttilt_\La\T$ is a finite set, 
there exists a minimal tilting object $P$ in $\nabla_{\! \La}(T)$.  
Then, by \cite[Proposition 5.10,Theorem 5.11]{CKL}, the same argument of Proposition \ref{sm} works for tilting objects and irreducible tilting mutation. Hence we obtain Proposition \ref{l-1} for tilting objects and 
one can get $P\geq T\geq P[\ell-1]$. 

Thus, one obtains
\[\{T\in\tilt\T\ |\ \La\geq T\geq \La[\ell] \}\subseteq \bigcup_{
P\in\ttilt_\La\T
}
\{U\in\tilt\T\ |\ P\geq U\geq P[\ell-1] \}.\]
By \cite[Theorem 5.11]{CKL}, the finiteness of $\ttilt_\La\T$ implies that
$P$ can be obtained from $\La$ by iterated irreducible left tilting mutation.
Therefore, our assumption yields that $\ttilt_P\T$ is also a finite set.
Repeating this argument leads to the assertion. 
\end{proof}

Finally, as an application of Theorem \ref{silting-discrete}, we show that silting-discrete categories satisfy a Bongartz-type lemma. 
For this purpose, we give the following definition.

\begin{definition}
We call a presilting object $T$ in $\T$ \emph{partial silting}
if it is a direct summand of some silting object, that is, 
there exists an object $T'$ such that $T\oplus T'$ is a silting object. 
\end{definition}

One of the important questions is if any presilting object is partial silting or not   \cite[Question 3.13]{BY}. 
We will show that it has a positive answer in the case of silting-discrete categories.

Let us recall the following result.

\begin{proposition}\cite[proposition 2.16]{A}\label{2.16}
Let $T$ a presilting object in $\T$. 
If $A\geq T\geq A[1]$, then $T$ is partial silting.
\end{proposition}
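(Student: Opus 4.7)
The plan is to carry out a Bongartz-style completion by constructing an explicit complement $T'$ to $T$ such that $T\oplus T'$ is silting. The natural candidate, by analogy with the classical tilting Bongartz lemma, should arise as the cone of a left $(\add T)$-approximation of $A$.

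First I would invoke the standing functorial-finiteness assumption to take a left $(\add T)$-approximation $a\colon A\to T^0$ and complete it to a triangle
\[ A\xrightarrow{a}T^0\xrightarrow{b}T'\xrightarrow{c}A[1]. \]
I claim $T\oplus T'$ is the required silting object. The thickness condition $\thick(T\oplus T')=\T$ is immediate: the triangle places $A$ inside $\thick(T,T')$, and then $\thick(A)=\T$ since $A$ is silting.

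The presilting condition $\Hom_\T(T\oplus T',(T\oplus T')[i])=0$ for $i>0$ splits into four cases that I would verify via the long exact sequences induced by the triangle. The term $\Hom_\T(T,T[i])$ is zero by hypothesis. For $\Hom_\T(T,T'[i])$ I would apply $\Hom_\T(T,-)$, whereupon $\Hom_\T(T,T^0[i])=0$ since $T^0\in\add T$ and $T$ is presilting, while $\Hom_\T(T,A[i+1])=0$ because $T\geq A[1]$ forces $\Hom_\T(T,A[j])=0$ for $j\geq 2$. For $\Hom_\T(T',T'[i])$, after reducing by the previous vanishing, it suffices to see $\Hom_\T(T',A[i+1])=0$, which follows from $\Hom_\T(A[1],A[i+1])=0$ ($A$ silting) together with $\Hom_\T(T^0,A[i+1])=0$ (using $T\geq A[1]$ again). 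The genuinely delicate case is $\Hom_\T(T',T[i])$: applying $\Hom_\T(-,T)$ yields $\Hom_\T(A[1],T[i])\to\Hom_\T(T',T[i])\to\Hom_\T(T^0,T[i])$, and for $i\geq 2$ everything vanishes using $A\geq T$, but at $i=1$ the term $\Hom_\T(A[1],T[1])\cong\Hom_\T(A,T)$ does \emph{not} vanish a priori.

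The main obstacle is therefore the case $\Hom_\T(T',T[1])=0$, and here I would exploit that $a$ is a left $(\add T)$-approximation, which is precisely the statement that $a^\ast\colon\Hom_\T(T^0,T)\to\Hom_\T(A,T)$ is surjective. Feeding this surjectivity into the long exact sequence $\Hom_\T(T^0,T)\xrightarrow{a^\ast}\Hom_\T(A,T)\to\Hom_\T(T',T[1])\to\Hom_\T(T^0,T[1])=0$ pinches $\Hom_\T(T',T[1])$ to zero, closing the last gap. So the approximation property of $a$ is exactly the input that makes the construction go through; the remaining verifications are routine long-exact-sequence chases.
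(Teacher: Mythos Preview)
Your proof is correct. Note, however, that the paper itself does not supply a proof of this proposition: it is quoted verbatim from \cite[Proposition 2.16]{A} and used as a black box. Your Bongartz-style completion---take a left $(\add T)$-approximation $a\colon A\to T^0$, complete to a triangle, and verify that the cone $T'$ together with $T$ is silting---is precisely the argument given in the cited reference, and your identification of the surjectivity of $a^\ast$ as the key input for the delicate case $\Hom_\T(T',T[1])=0$ is exactly right.
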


Then we can improve Proposition \ref{2.16} as follows.

\begin{proposition}\label{inductive silting}
Let $T$ a presilting object in $\T$ such that $A\geq T$. 
Assume that for any silting object $B$ in $\T$ such that $A\geq B\geq T$, there exists a minimal  object in $\nabla_{\! B}(T)$. 

Then there exists a silting object $P$ in $\T$ satisfying $P\geq T\geq P[1]$. 
In particular, $T$ is partial silting.
\end{proposition}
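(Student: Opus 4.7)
The plan is to argue by descending induction on the integer $\ell\geq 0$ with $T\geq A[\ell]$, using Proposition~\ref{l-1} as the engine to reduce $\ell$ by one at each step, and to conclude via Proposition~\ref{2.16}. Because the hypothesis in the statement gives us a minimal element in $\nabla_{\! B}(T)$ for every intermediate silting $B$, we have exactly the ingredient needed to iterate Proposition~\ref{l-1}.

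First, since $A\geq T$, I invoke \cite[Proposition 2.4]{AI} to pick the smallest integer $\ell_0\geq 0$ such that $T\geq A[\ell_0]$. If $\ell_0\leq 1$, then already $A\geq T\geq A[1]$ (the case $\ell_0=0$ forces $T\in\add A$, which is fine), so $P:=A$ works and Proposition~\ref{2.16} gives the partial silting conclusion. Otherwise, I build a chain of silting objects $A=B_0\geq B_1\geq B_2\geq\cdots$ with the property that $A\geq B_i\geq T$ and $T\geq B_i[\ell_0-i]$ for each $i$.

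The inductive step is the main point. Suppose $B_i$ has been constructed with $A\geq B_i\geq T$ and $T\geq B_i[\ell_0-i]$, and $\ell_0-i\geq 1$. By the assumption of the proposition applied to $B=B_i$, there exists a silting object $B_{i+1}$ which is minimal in $\nabla_{\! B_i}(T)$. Then $B_i\geq B_{i+1}\geq B_i[1]$ and $B_{i+1}\geq T$, so combined with the inductive hypothesis $A\geq B_i$, we get $A\geq B_{i+1}\geq T$, keeping us inside the range where the hypothesis is available. Applying Proposition~\ref{l-1} with the roles of $A$ and $P$ played by $B_i$ and $B_{i+1}$ (using $T\geq B_i[\ell_0-i]$ with $\ell_0-i>0$), we obtain $T\geq B_{i+1}[\ell_0-i-1]$. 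This completes the inductive step.

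After $\ell_0-1$ iterations, the silting object $P:=B_{\ell_0-1}$ satisfies $P\geq T\geq P[1]$, proving the first assertion. The last assertion then follows immediately from Proposition~\ref{2.16}. The principal obstacle I expect is bookkeeping: one must check at each stage that $B_{i+1}$ lies in the admissible range $A\geq B_{i+1}\geq T$ so that the hypothesis of the proposition can be reapplied, but this is automatic from the definition of $\nabla_{\! B_i}(T)$ and transitivity of $\geq$. No finiteness is required of $\nabla_{\! B}(T)$ itself; only the existence of a minimal element, which is exactly what is being assumed.
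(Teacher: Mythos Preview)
Your argument is correct and follows essentially the same route as the paper: invoke \cite[Proposition 2.4]{AI} to obtain an $\ell$ with $A\geq T\geq A[\ell]$, then iteratively pick a minimal element $B_{i+1}$ in $\nabla_{\!B_i}(T)$ and apply Proposition~\ref{l-1} to decrease $\ell$ by one, terminating with $P=B_{\ell-1}$ and concluding via Proposition~\ref{2.16}. Your extra care in verifying that $A\geq B_{i+1}\geq T$ at each stage (so the hypothesis remains applicable) is a welcome clarification that the paper leaves implicit.
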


\begin{proof}
We can take $\ell\geq 0$ such that $A\geq T\geq A[\ell]$ by \cite[Proposition 2.4]{AI}.  
It is enough to show the statement for $\ell\geq2$.
Since there is a minimal silting object in $\nabla_{\! A}(T)$, where we denote it by $A_1$, we have $A_1\geq T\geq A_1[\ell-1]$ by Proposition \ref{l-1}.
By our assumption, we can repeat this argument
and we obtain a sequence 
\[A=A_0\geq A_1\geq\cdots\geq A_{\ell-1}\geq T\geq A_{\ell-1}[1]\geq\cdots\geq A_1[\ell-1]\geq A[\ell],\] 
where $A_{i+1}$ is a minimal object in $\nabla_{\! A_{i}}(T)$ for $0\leq i\leq \ell-2$.
Thus, we get the desired silting object $P:=A_{\ell-1}$.

The second assertion immediately follows from the first one and Proposition \ref{2.16}.
\end{proof}

As a consequence, we obtain the following theorem.

\begin{theorem}\label{Bongartz: silting-discrete}
If $\T$ is silting-discrete, then any presilting object is partial silting.
\end{theorem}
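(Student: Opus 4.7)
The plan is to obtain Theorem \ref{Bongartz: silting-discrete} as an essentially one-line corollary of Proposition \ref{inductive silting}, after feeding it the finiteness provided by silting-discreteness and the existence statement of Lemma \ref{nonempty}. So the strategy is to check that the hypotheses of Proposition \ref{inductive silting} are automatic in a silting-discrete category.

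First, I would fix a silting object $A_0 \in \T$ (which exists since $\T$ is silting-discrete) and let $T$ be an arbitrary presilting object. To place ourselves in the Assumption preceding Proposition \ref{l-1}, I would use \cite[Proposition 2.4]{AI} to find $n \geq 0$ such that $A := A_0[-n]$ satisfies $A \geq T$; this $A$ is still a silting object. Now I am in the setting $A \geq T$ with $A$ silting and $T$ presilting, which is exactly what Proposition \ref{inductive silting} requires as input.

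Next I would verify the main hypothesis of Proposition \ref{inductive silting}: for every silting object $B$ with $A \geq B \geq T$, the poset $\nabla_{\! B}(T)$ has a minimal element. Because $\T$ is silting-discrete, Theorem \ref{silting-discrete} gives that $\T$ is 2-silting-finite, so $\tsilt_B \T$ is a finite set. By Lemma \ref{nonempty}, the finiteness of $\tsilt_B \T$ at once furnishes a minimal element of $\nabla_{\! B}(T)$. Hence the hypothesis of Proposition \ref{inductive silting} holds, and its conclusion yields a silting object $P$ with $P \geq T \geq P[1]$; in particular $T$ is partial silting.

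There is no genuine obstacle: the entire argument is a chaining of already-established implications, and the only mildly non-trivial point is the opening reduction to $A \geq T$, which is a standard shift argument via \cite[Proposition 2.4]{AI}. The real work is concentrated in Propositions \ref{l-1} and \ref{inductive silting}; this final theorem is their packaging.
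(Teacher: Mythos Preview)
Your proposal is correct and matches the paper's own proof essentially line for line: both reduce to the case $A \geq T$ by a shift, then invoke Theorem \ref{silting-discrete} to get 2-silting-finiteness, feed this into Lemma \ref{nonempty} to guarantee minimal elements in every $\nabla_{\! B}(T)$, and conclude via Proposition \ref{inductive silting}. The only cosmetic difference is that the paper shifts $T$ while you shift $A$, which is immaterial.
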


\begin{proof}
Take a presilting object $T$ in $\T$. 
If $T$ is presilting, then so is $T[i]$ for any $i$.
Hence we can assume that $A\geq T$. 
Then, by Theorem \ref{silting-discrete} and Lemma \ref{nonempty}, $\T$ satisfies the assumption of Proposition \ref{inductive silting} and hence we can obtain the conclusion.
\end{proof}

We remark that in \cite[secion 5]{BPP} the authors also discuss the Bongartz completion using a different type of partial orders.


\section{Basic properties of preprojective algebras of Dynkin type}
In this section, we review some definitions and results we will use in the rest of this paper.

\subsection{Preprojective algebras}\label{nakayama per}

Let $Q$ be a finite connected acyclic quiver. We denote by $Q_0$ vertices of $Q$ and by $Q_1$ arrows of $Q$. 
We denote by $\overline{Q}$ the double quiver of $Q$, which is obtained by adding an arrow $a^*:j\to i$ for each arrow $a:i\to j$ in $Q_1$.
The \emph{preprojective algebra} $\La_Q=\La$ associated to $Q$ is the algebra $K\overline{Q}/I$, where $I$ is the ideal in the path algebra $K\overline{Q}$ generated by the relation of the form:
\[\sum_{a\in Q_1}(aa^*-a^*a).\]

We remark that $\La$ does not depend on the orientation of $Q$. 
Hence, for a graph $\Delta$, we define the preprojective algebra by $\La_\Delta=\La_Q$, where $Q$ is a quiver whose underlying graph is $\Delta$.  
We denote by $\Delta_0$ vertices of $\Delta$. 

Let $\Delta$ be a Dynkin graph (by Dynkin graph we always mean the one of type ADE). The preprojective algebra of $\Delta$ is finite dimensional and selfinjective \cite[Theorem 4.8]{BBK}. 
Without loss of generality, we may suppose that vertices are given as Figure \ref{figure} (This is because these choices make the  argument simple) and  
let $e_i$ be the primitive idempotent of $\La$ associated with $i\in \Delta_0$. 
We denote the Nakayama permutation of $\La$ by $\iota:\Delta_0\to \Delta_0$ (i.e. $D(\La e_{\iota(i)})\cong e_{i}\La$). 
Then, one can check that we have $\iota=\id$ if  $\Delta$ is type $\type{D}_{2n},\type{E}_7$ and $\type{E}_8$. Otherwise, we have $\iota^2=\id$ and it is given as follows.

\[\left\{\begin{array}{ll}
\iota(1)=1\ \mbox{and}\ \iota(i)=i+n-1\ \mbox{for}\ i\in\{2,\cdots, n\}\ \ \ \ \ &\mbox{if $\type{A}_{2n-1}$}\\
\iota(i)=i+n\ \mbox{for}\ i\in\{1,\cdots, n\} \ \ \ &\mbox{if $\type{A}_{2n}$}\\
\iota(1)=n\ \mbox{and}\ \iota(i)=i\ \mbox{for}\ i\notin\{1,n\} \ \ \ &\mbox{if $\type{D}_{2n+1}$}\\
\iota(3)=5, \iota(4)=6\ \mbox{and}\ \iota(i)=i \ \mbox{for}\ i\in\{1,2\}&\mbox{if $\type{E}_{6}$.}\\
\end{array}\right.\]

\begin{figure}
\[\def\arraystretch{2}
\begin{array}{ll}
\type{A}_{2n-1}\ : &
\begin{array}{c}
\xymatrix@C5pt@R10pt{
n \ar@{-}[r] & \cdots \ar@{-}[r] & 2 \ar@{-}[r] &\ar@{-}[r]  1 &\ar@{-}[r](n+1)& \cdots \ar@{-}[r] &   (2n-1). 
}
\end{array} \\
\type{A}_{2n}\ : &
\begin{array}{c}
\xymatrix@C5pt@R10pt{
n \ar@{-}[r]  & \cdots \ar@{-}[r] & 2 \ar@{-}[r] &\ar@{-}[r]  1 &\ar@{-}[r](n+1)& \cdots \ar@{-}[r]   &   2n. 
}
\end{array} \\

\type{B}_n\ (n\geq1): &
\begin{array}{c}
\xymatrix{
1 \ar@{-}[r]^{4} & 2 \ar@{-}[r]  & \cdots &\ar@{-}[r]  & n -1\ar@{-}[r]  & n. 
}
\end{array} \\
\type{D}_n\ (n\geq4): &
\begin{array}{c}
\xymatrix@R=0.3cm{
1  \ar@{-}[rd]&   &   &        & \\
  & 2 \ar@{-}[r]  &3 \ar@{-}[r]  & \cdots \ar@{-}[r]  & n-1.  \\
n \ar@{-}[ru] &   &   &        &
}
\end{array} \\
\type{E}_n\ (n=6,7,8): &
\begin{array}{c}
\xymatrix{
  & & 1  \ar@{-}[d]&   &   &        & \\
4 \ar@{-}[r]  & 3 \ar@{-}[r]  &
2 \ar@{-}[r]   & 5 \ar@{-}[r] & \cdots \ar@{-}[r]& n. 
}
\end{array} \\
\type{F}_4\ : &
\begin{array}{c}
\xymatrix{
1 \ar@{-}[r] & 2 \ar@{-}[r]^4 & 3 \ar@{-}[r]  & 4. 
}
\end{array} \\

\end{array}\]
\caption{}\label{figure}\end{figure}

\subsection{Weyl group}\label{relation}

Let $\Delta$ be a graph given as Figure \ref{figure}. 
The \emph{Weyl group} $W_\Delta$ associated to $\Delta$ is defined by the generators 
$s_i$ and relations 
$(s_is_j)^{m(i,j)}=1$, 
where  

\[
m(i,j):=\left\{\begin{array}{ll}
1\ \ \ \ \ &\mbox{if $i=j$,}\\
2\ \ \ \ \ &\mbox{if no edge between $i$ and $j$ in $\Delta$,}\\
3\ \ \ \ \ &\mbox{if there is an edge $i\stackrel{ }{\mbox{---}}j$ in $\Delta$,}\\
4\ \ \ \ &\mbox{if  there is an edge $i\stackrel{4}{\mbox{---}}j$ in $\Delta$.}\\
\end{array}\right.\]

For $w\in W_\Delta$, we denote by $\ell(w)$ the length of $w$.
 
Let $\Delta$ be a Dynkin graph, $\La$ the preprojective algebra and $\iota$ the Nakayama permutation of $\Lambda$. 
Then $\iota$ acts on an element of the Weyl group $W_\Delta$ by $\iota(w):=s_{\iota(i_1)}s_{\iota(i_2)}\cdots s_{\iota(i_k)}$ 
for $w=s_{i_1}s_{i_2}\cdots s_{i_k}\in W_\Delta$. 
We define the subgroup $W^\iota_\Delta$ of $W_\Delta$ by 
\[W^\iota_\Delta:=\{w\in W\ |\ \iota(w)=w \}.\]

Let $w_0$ be the longest element of $W_\Delta$. 
Note that we have $w_0ww_0=\iota(w)$ for $w\in W_\Delta$ (\cite{ES}). 
In particular we have $w_0w=ww_0$ for any $W^\iota_\Delta$.

Moreover we have the following result.

\begin{theorem}\label{folding}
Let $\Delta$ be a Dynkin (ADE) graph whose vertices are given as Figure \ref{figure} 
and $W_\Delta$ the Weyl group of $\Delta$. 
Let $\Delta^{\rm f}$ be a graph given by the following type. 
\[\begin{array}{|c|c|c|c|c|c|c|c|}
\hline                                                                       
\Delta & \type{A}_{2n-1},\type{A}_{2n}&\type{D}_{2n} &\type{D}_{2n+1} & \type{E}_6  & \type{E}_7 & \type{E}_8 \\ \hline
\Delta^{\rm f} &\type{B}_n  &\type{D}_{2n}        &\type{B}_{2n}      & \type{F}_4 & \type{E}_7  & \type{E}_8 \\ \hline
\end{array}\]

Then we have $W_\Delta^\iota=\langle t_i\ |\ i \in \Delta^{\rm f}_0\rangle$, where 
\[\tag{T}\label{fold}
t_i:=\left\{\begin{array}{ll}
\ s_i & \mbox{if  $i=\iota(i)$ in $\Delta$},\\
\ s_is_{\iota(i)}s_i & \mbox{if there is an edge $i\stackrel{ }{\mbox{---}}\iota(i)$ in $\Delta$},\\
\ s_is_{\iota(i)} & \mbox{if no edge between $i$ and $\iota(i)$ in $\Delta$},\\
\end{array}\right.\]

and $W_\Delta^\iota$ is isomorphic to $W_{\Delta^{\rm f}}$. 
\end{theorem}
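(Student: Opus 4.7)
The plan is to prove Theorem \ref{folding} by combining a general principle about fixed subgroups of Coxeter groups under diagram automorphisms with a direct case-by-case verification of the resulting Coxeter matrix. First I would verify that $\iota$ is a graph automorphism of $\Delta$. This is immediate from the explicit formulas in Subsection~\ref{nakayama per}: in each case, adjacency is preserved. In the three cases where $\iota=\id$ (namely $\type{D}_{2n}, \type{E}_7, \type{E}_8$), we have $W_\Delta^\iota=W_\Delta$, $t_i=s_i$ for all $i$, and $\Delta^{\rm f}=\Delta$, so the statement is trivial.

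For the four remaining cases ($\type{A}_{2n-1}, \type{A}_{2n}, \type{D}_{2n+1}, \type{E}_6$), I would partition $\Delta_0$ into $\iota$-orbits. Each orbit is either a fixed point $\{i\}$, a non-adjacent pair $\{i, \iota(i)\}$, or an adjacent pair $\{i, \iota(i)\}$. For a non-adjacent pair the parabolic subgroup $\langle s_i, s_{\iota(i)}\rangle\cong W(\type{A}_1\times\type{A}_1)$ has longest element $s_is_{\iota(i)}$, and for an adjacent pair $\langle s_i, s_{\iota(i)}\rangle\cong W(\type{A}_2)$ has longest element $s_is_{\iota(i)}s_i$. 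These are precisely the definitions of $t_i$ in (\ref{fold}), and each $t_i$ is an involution in $W_\Delta$ by a direct application of the braid relations.

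Next, I would invoke the classical folding principle (going back to Steinberg, later refined by H\'ee and M\"uhlherr): for a finite Coxeter system $(W,S)$ with a diagram automorphism $\sigma$, the fixed subgroup $W^\sigma$ is itself a Coxeter group generated by the longest elements of the parabolic subgroups corresponding to the $\sigma$-orbits on $S$. Applied here, this gives $W_\Delta^\iota=\langle t_i \mid i\in \Delta^{\rm f}_0\rangle$ and reduces the problem to computing the Coxeter matrix, i.e.\ the orders $m^{\rm f}(i,j)$ of the products $t_it_j$ for $i\ne j$.

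Finally, I would carry out the case-by-case computation of $m^{\rm f}(i,j)$ inside $W_\Delta$. For instance, in type $\type{A}_{2n-1}$, the only fixed orbit is $\{1\}$ and the other orbits $\{i, i+n-1\}$ with $2\le i\le n$ are non-adjacent pairs; a short braid-relation calculation gives $m^{\rm f}(1,2)=4$, $m^{\rm f}(i,i+1)=3$ for $2\le i\le n-1$, and $m^{\rm f}(i,j)=2$ for non-consecutive $i,j$, producing $\type{B}_n$. The cases $\type{D}_{2n+1}$, $\type{E}_6$ are treated analogously. The main obstacle will be the case $\type{A}_{2n}$: here $\{1,n+1\}$ is an adjacent orbit, so $t_1=s_1s_{n+1}s_1$, and one must compute $m^{\rm f}(1,2)$ carefully to verify that the edge labeled $4$ in $\type{B}_n$ appears (rather than $3$ or $6$), which is the combinatorial subtlety behind the $\type{A}_{2n}\mapsto\type{B}_n$ folding.
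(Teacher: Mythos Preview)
Your proposal is correct and follows essentially the same approach as the paper. The paper's proof is a one-line citation: it observes that the Nakayama permutation is the diagram automorphism described in Subsection~\ref{nakayama per} and then defers entirely to \cite[Chapter 13]{C} (Carter's treatment of twisted groups), which contains precisely the folding principle you invoke. Your write-up is simply a more explicit unpacking of that citation---naming the Steinberg/H\'ee/M\"uhlherr result, spelling out that the $t_i$ are longest elements of orbit parabolics, and sketching the case-by-case verification of the Coxeter matrix (including the correct flag on the $\type{A}_{2n}$ subtlety where an orbit has an internal edge).
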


\begin{proof}
This follows from the above property of the 
Nakayama permutation and \cite[Chapter 13]{C}.
\end{proof}

For the convenience, we introduce the following terminology.

\begin{definition}\label{def fold}
We call the graph $\Delta^{\rm f}$ given in Theorem \ref{folding} the \emph{folded graph} of $\Delta$.
\end{definition}

\begin{example}
\begin{itemize}
\item[(a)]Let $\Delta$ be a graph of type $\type{A}_5$.
Then one can check that $W_\Delta^\iota$ is given by $\langle s_1,s_2s_4,s_3s_5\rangle$ and this group 
is isomorphic to $W_{\Delta^{\rm f}}$, where $\Delta^{\rm f}$ is a graph of type $\type{B}_3$.
\item[(b)] Let $\Delta$ be a graph of type $\type{A}_6$.
Then one can check that $W_\Delta^\iota$ is given by $\langle s_1s_4s_1,s_2s_5,s_3s_6\rangle$ and this group  
is isomorphic to $W_{\Delta^{\rm f}}$, where $\Delta^{\rm f}$ is a graph of type $\type{B}_3$. 
\item[(c)]Let $\Delta$ be a graph of type $\type{D}_5$.
Then one can check that $W_\Delta^\iota$ is given by $\langle s_1s_5,s_2,s_3,s_4\rangle$ and this group is isomorphic to $W_{\Delta^{\rm f}}$, where $\Delta^{\rm f}$ is a graph of type $\type{B}_4$.
\item[(d)]Let $\Delta$ be a graph of type $\type{E}_6$.
Then one can check that $W_\Delta^\iota$ is given by $\langle s_1,s_2,s_3s_5,s_4s_6\rangle$ and this group is isomorphic to $W_{\Delta^{\rm f}}$, where $\Delta^{\rm f}$ is a graph of type $\type{F}_4$.
\end{itemize}
\end{example}


\subsection{Support $\tau$-tilting modules and two-term silting complexes.} 
In this subsection, we briefly recall the notion of support $\tau$-tilting modules introduced in \cite{AIR}, and its relationship with silting complexes. We refer to \cite{AIR,IR2} for a background of support $\tau$-tilting modules.

Let $\La$ be a finite dimensional algebra and we denote by $\tau$ the AR translation \cite{ARS}.

\begin{definition}
\begin{itemize}
\item[(a)] We call $X$ in $\mod\Lambda$ \emph{$\tau$-rigid} if $\Hom_{\Lambda}(X,\tau X)=0$.
\item[(b)] We call $X$ in $\mod\Lambda$ \emph{$\tau$-tilting} if $X$ is $\tau$-rigid and $|X|=|\Lambda|$, where $|X|$ denotes the number of non-isomorphic indecomposable direct summands of $X$.
\item[(c)] We call $X$ in $\mod\Lambda$ \emph{support $\tau$-tilting} if there exists an idempotent $e$ of $\La$ such that $X$ is a $\tau$-tilting $(\La/\langle e\rangle)$-module.
\end{itemize}
We can also describe these notions as pairs as follows.

\begin{itemize}
\item[(d)] We call a pair $(X,P)$ of $X\in\mod\La$ and $P\in\proj\Lambda$ 
 \emph{$\tau$-rigid} if $X$ is $\tau$-rigid and $\Hom_\Lambda(P,X)=0$.
\item[(e)] We call a $\tau$-rigid pair $(X,P)$ a \emph{support $\tau$-tilting}   (respectively, \emph{almost complete support $\tau$-tilting}) pair if $|X|+|P|=|\Lambda|$ (respectively, $|X|+|P|=|\Lambda|-1$). 
\end{itemize}
\end{definition}

We say that $(X,P)$ is \emph{basic} if $X$ and $P$ are basic, and we say that $(X,P)$ is a \emph{direct summand} of $(X',P')$ if $X$ is a direct summand of $X'$ and $P$ is a direct summand of $P'$. Note that 
a basic support $\tau$-tilting module $X$ determines a basic support $\tau$-tilting pair $(X,P)$ uniquely \cite[Proposition 2.3]{AIR}. Hence we can identify basic support $\tau$-tilting modules with basic support $\tau$-tilting pairs. 
We denote by $\sttilt\La$ the set of isomorphism classes of basic support $\tau$-tilting $\La$-modules. 

Finally we recall an important relationship between support $\tau$-tilting modules and two-term silting complexes. 
We write $\silt\Lambda:=\silt\Kb(\proj\La)$ and $\tilt\Lambda:=\tilt\Kb(\proj\La)$ for simplicity.
We denote by $\tsilt\Lambda$ (respectively, $\ttilt\Lambda$) the subset of $\silt\Lambda$ (respectively, $\tilt\Lambda$) consisting of two-term (i.e. it is concentrated in the degree 0 and $-1$) complexes. Note that a complex $T$ is two-term if and only if $\La\geq T\geq\La[1]$.

Then we have the following nice correspondence. 

\begin{theorem}\cite[Theorem 3.2, Corollary 3.9]{AIR}\label{tau-2silt}
Let $\Lambda$ be a finite dimensional algebra. 
There exists a bijection $\Psi:\sttilt\Lambda\longrightarrow\tsilt\Lambda,$
\[(X,P)\mapsto \Psi(X,P):=\left\{\begin{array}{cccc}
\stackrel{-1}{P_X^1}&\stackrel{f}{\longrightarrow}&\stackrel{0}{P_X^0}&\\
&\oplus&&\in\Kb(\proj\La)\\
P&&&
\end{array}\right.\]
where $\xymatrix@C10pt@R10pt{P_X^1\ar[r]^f& P_X^0\ar[r]^{}&X\ar[r]& 0}$ is a minimal projective presentation of $X$. 
Moreover, it gives an isomorphism of the partially ordered sets between $\sttilt\Lambda$ and $\tsilt\Lambda$. 
\end{theorem}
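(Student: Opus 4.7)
The plan is to prove the bijection by constructing an explicit inverse $\Phi\colon\tsilt\La\to\sttilt\La$ and checking that $\Psi$ and $\Phi$ are mutually inverse and order-preserving. Throughout, the main bridge between the two sides is the Auslander--Reiten-type formula that computes $\Hom_{\Kb(\proj\La)}(\Psi(X,0),\Psi(Y,0)[1])$ in terms of $\Hom_\La(Y,\tau X)$; almost every verification reduces to a careful application of this identity.

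First I would show $\Psi$ is well-defined. The complex $\Psi(X,P)$ is two-term, and its number of indecomposable summands is $|X|+|P|=|\Lambda|$ because minimality of the projective presentation matches $|X|$ with the number of summands of $P_X^0$. For the presilting condition I would split
\[\Hom_{\Kb}(\Psi(X,P),\Psi(X,P)[1])\]
into four pieces according to the decomposition $\Psi(X,P)=T_X\oplus P[1]$, where $T_X:=(P_X^1\to P_X^0)$. Two of the pieces vanish for degree reasons. The cross piece $\Hom_{\Kb}(P[1],T_X[1])\cong\Hom_\La(P,\coker f)=\Hom_\La(P,X)$ vanishes by the hypothesis $\Hom_\La(P,X)=0$ in the $\tau$-rigid pair $(X,P)$. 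The main piece $\Hom_{\Kb}(T_X,T_X[1])$ is computed by the AR-style formula as $D\,\Hom_\La(X,\tau X)$, which is zero by the $\tau$-rigidity of $X$. Generation of $\Kb(\proj\La)$ as a thick subcategory is then a standard induction: the triangle $T_X\to P_X^0\to X\to T_X[1]$ places the summands of $P_X^0$ in $\thick\Psi(X,P)$, and the idempotent $e$ with $(X,P)$ a $\tau$-tilting pair over $\La/\langle e\rangle$ recovers $e\La$ from $P$, giving all of $\La$.

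For the inverse, given $T=(P^{-1}\xrightarrow{g}P^0)\in\tsilt\La$ I would use the fact that, up to isomorphism in $\Kb$, we can uniquely write $T=T'\oplus Q[1]$ where $T'=(P'^{-1}\xrightarrow{g'}P^0)$ has $g'$ a radical map (equivalently, $g'$ is a minimal projective presentation of $\coker g'$) and $Q\in\proj\La$. Then set $\Phi(T):=(\coker g',Q)$. Well-definedness of $\Phi$ uses the same AR identification in reverse: $\Hom_{\Kb}(T,T[1])=0$ forces $\Hom_\La(\coker g',\tau\coker g')=0$ and $\Hom_\La(Q,\coker g')=0$, and the summand count gives $|\coker g'|+|Q|=|\Lambda|$ so that $(\coker g',Q)$ is indeed a support $\tau$-tilting pair. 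The identities $\Psi\Phi=\id$ and $\Phi\Psi=\id$ then reduce to the observation that minimal projective presentations are unique up to isomorphism. Finally, order preservation follows because $T\geq T'$ in $\silt\La$ is equivalent to $\Hom_{\Kb}(T,T'[1])=0$, which by the same AR formula translates exactly into the partial order on support $\tau$-tilting pairs used in \cite{AIR}. The single delicate point — and the main obstacle — is setting up the AR formula carefully enough in terms of minimal projective presentations that all four $\Hom$ pieces can be read off; the generation step and the inverse construction are routine once this is in place.
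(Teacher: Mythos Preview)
The paper does not give its own proof of this theorem: it is quoted verbatim from \cite[Theorem 3.2, Corollary 3.9]{AIR} as background for the later sections, with no argument supplied. So there is no proof in the present paper to compare your proposal against.

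That said, your outline is essentially the argument carried out in \cite{AIR}. The core ingredient there, as in your sketch, is the identification (for two-term complexes arising from minimal projective presentations) of $\Hom_{\Kb}(T_X,T_Y[1])$ with $D\overline{\Hom}_\La(Y,\tau X)$, together with the elementary computation of the cross terms involving $P[1]$. One small point: the precise formula involves the \emph{stable} Hom $\overline{\Hom}$ rather than $\Hom$, but this distinction disappears when one only needs vanishing, since $\Hom_\La(X,\tau X)=0$ is equivalent to $\overline{\Hom}_\La(X,\tau X)=0$. Another small point: your sentence ``minimality of the projective presentation matches $|X|$ with the number of summands of $P_X^0$'' is not quite the right reason; what you need is that the two-term complex $T_X$ decomposes into $|X|$ indecomposable summands (one per indecomposable summand of $X$), which uses that the presentation is minimal but is a statement about the complex, not about $P_X^0$. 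With these tweaks your plan matches the standard proof.
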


By the above correspondence, we can give a description of two-term silting complexes by calculating support $\tau$-tilting modules, which is much simpler than calculations of 
two-term silting complexes.


\section{Two-term tilting complexes and Weyl groups}

In this section, we characterize 2-term tilting complexes in terms of the Weyl group. In particular, we provide a complete description of 2-term tilting complexes.

Throughout this section, let $\Delta$ be a Dynkin (ADE) graph with $\Delta_0=\{1,\ldots, n\}$, $\La$ the preprojective algebra of $\Delta$ and $I_i:=\La(1-e_i)\La$, where $e_i$ the primitive idempotent of $\La$ associated with $i\in \Delta_0$. 
We denote by $\langle I_1,\ldots,I_n\rangle$ the set of ideals of $\La$ which can be written as 
$$I_{i_1}I_{i_2}\cdots I_{i_k}$$ for some $k\geq0$ and $i_1,\ldots,i_k\in \Delta_0$. 
Note that it has recently been understood that these ideals play an important role in several situations, for example \cite{IR1,BIRS,GLS2,ORT,BK,BKT}. 

Then we use the following important results. 

\begin{theorem}\label{tau-weyl}
\begin{itemize}
\item[(a)]There exists a bijection $W_\Delta\to\langle I_1,\ldots,I_n\rangle$, which is given by $w\mapsto I_w =I_{i_1}I_{i_2}\cdots I_{i_k}$ for any reduced 
expression $w=s_{i_1}\cdots s_{i_k}$.
\item[(b)]There exist bijections between 
$$W_\Delta\ \ \ \longrightarrow\ \ \ \sttilt\La\ \ \ \longrightarrow\ \ \  \tsilt\La,$$
$$\  \ \ \ \ \ \ \ \ \ w\ \ \  \ \ \mapsto\ \ \ \ (I_w,P_w)\ \  \ \ \mapsto\ \  S_w:=\Psi(I_w,P_w).$$
\item[(c)]
The Weyl group $W_\Delta$ acts transitively and faithfully on $\tsilt\La$ by
$$s_i\cdot(S_w):=\mu_i(S_w)\cong S_{s_iw},$$ 
where $\mu_i$ is the silting mutation associated with $i\in \Delta_0$.
\end{itemize}
\end{theorem}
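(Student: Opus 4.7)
The plan is to prove the three parts in order, each resting on the preceding ones together with cited prior results.

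For part (a), I would appeal to~\cite{M1}, which (in the Dynkin case) establishes that the ideals $I_i$ of $\La$ satisfy the braid relations of $W_\Delta$ inside the monoid of two-sided ideals under multiplication, so that $I_w$ depends only on $w$ and not on the chosen reduced expression. The same source provides injectivity of $w\mapsto I_w$. The essential ingredient is a dimension-vector computation for $\La/I_w$, which controls ranks and separates distinct elements of $W_\Delta$.

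For part (b), with (a) in hand, I would show that for each $w\in W_\Delta$ the pair $(I_w,P_w)$, where $P_w$ is the projective complement determined by the support of $I_w$, forms a basic support $\tau$-tilting pair over $\La$; again this is precisely what~\cite{M1} provides. Composing with the bijection $\Psi$ of Theorem~\ref{tau-2silt} then yields the map $w\mapsto S_w$ into $\tsilt\La$. Surjectivity onto $\sttilt\La$ (and hence onto $\tsilt\La$) is obtained from a cardinality argument combined with the mutation-connectedness of support $\tau$-tilting modules from~\cite{AIR}: every irreducible left mutation at vertex $i$ stays within the image, and $\La$ itself is in the image (corresponding to $w=e$), so iterated mutation exhausts $\sttilt\La$.

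For part (c), I would interpret the Weyl-group action combinatorially. Writing $w=s_{j_1}\cdots s_{j_k}$ reduced, the complex $S_w$ is obtained from $S_e=\La$ by iterated silting mutation at the vertices $j_k,j_{k-1},\ldots,j_1$. Thus for a generator $s_i$, either $\ell(s_iw)=\ell(w)+1$ and $\mu_i(S_w)\cong S_{s_iw}$ by construction, or $\ell(s_iw)=\ell(w)-1$ in which case the same identity holds after using $s_i^2=1$ together with the fact that mutation is involutive on silting. Transitivity of the $W_\Delta$-action is then immediate, and faithfulness is inherited from the injectivity in (a) through the bijection in (b).

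The main obstacle lies entirely in part (a): verifying the braid relations and injectivity is nontrivial for Dynkin-type preprojective algebras, since $\La$ is finite-dimensional and the $I_i$ are no longer tilting modules (unlike the non-Dynkin case treated in~\cite{IR1,BIRS}). That step is precisely the content of~\cite{M1}, which we invoke as a black box and from which the rest of the theorem cascades by combining it with the general $\tau$-tilting/silting correspondence of~\cite{AIR}.
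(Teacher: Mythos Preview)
Your proposal is correct and follows essentially the same approach as the paper: both parts (a) and (b) are attributed directly to results of~\cite{M1} (specifically Theorems~2.14 and~2.21 there) combined with the $\tau$-tilting/silting correspondence of~\cite{AIR}, and part (c) is deduced from the action of $W_\Delta$ on $\sttilt\La$ established in~\cite[Theorem~2.16]{M1} together with the compatibility of mutation under $\Psi$ from~\cite[Corollary~3.9]{AIR}. The paper's proof is terser---it simply cites the relevant theorem numbers---whereas you sketch the internal mechanism of~\cite{M1}, but the logical structure is the same.
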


\begin{proof}
(a) This follows from \cite[Theorem 2.14]{M1} (\cite[III.1.9]{BIRS}).

(b) This follows from \cite[Theorem 2.21]{M1} and Theorem \ref{tau-2silt}.

(c) By \cite[Theorem 2.16]{M1}, $W_\Delta$ acts transitively and faithfully on $\sttilt\La$ by mutation of support $\tau$-tilting pairs (see \cite[Theorem 2.18, 2.28]{AIR} for mutation of support $\tau$-tilting pairs). 
On the other hand, \cite[Corollary 3.9]{AIR} implies that the bijection (b) gives the compatibility of mutation of 
support $\tau$-tilting pairs and two-term silting complexes. Hence we get the conclusion.
\end{proof}

Then, the aim of this section is to show the following result.

\begin{theorem}\label{action}
Let $\Delta$ be a Dynkin graph, $\La$ the preprojective algebra of $\Delta$ and $\iota$ the Nakayama permutation of $\La$.
\begin{itemize}
\item[(a)]
Let $\nu$ the Nakayama functor of $\Lambda$. 
Then $\nu(I_w)\cong I_w$ if and only if $\iota(w)=w.$
\item[(b)]We have a bijection 
$$W^\iota_\Delta\longrightarrow\ttilt\Lambda,\ w\mapsto S_w.$$
\item[(c)]Let $\Delta^{\rm f}$ be the folded graph of $\Delta$ (Definition \ref{def fold}) and define $\langle t_i\ |\ i\in \Delta^{\rm f}_0\rangle$ by (\ref{fold}) of Theorem \ref{folding}.  
Then 
$\langle t_i\ |\ i\in \Delta^{\rm f}_0\rangle$ acts transitively and faithfully on $\ttilt\La$. 
\end{itemize}
\end{theorem}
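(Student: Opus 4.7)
The plan is to prove (a)--(c) in order, with the Nakayama functor $\nu$ of $\Kb(\proj\Lambda)$ as the central tool throughout. For (a), I would exploit that on indecomposable projectives $\nu$ is realised by the Nakayama permutation, $\nu(e_i\Lambda)\cong e_{\iota(i)}\Lambda$. Interpreting $\nu$ via the Nakayama automorphism of $\Lambda$ that sends $e_i\mapsto e_{\iota(i)}$, the two-sided ideal $I_i=\Lambda(1-e_i)\Lambda$ is carried to $I_{\iota(i)}$, and taking products gives $\nu(I_w)\cong I_{\iota(w)}$ as right $\Lambda$-modules for any reduced expression $w=s_{i_1}\cdots s_{i_k}$ via $I_w=I_{i_1}\cdots I_{i_k}$ (Theorem \ref{tau-weyl}(a)) and the fact that $\ell(\iota(w))=\ell(w)$. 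Combined with the injectivity of $w\mapsto I_w$ in Theorem \ref{tau-weyl}(a), the isomorphism $\nu(I_w)\cong I_w$ is then equivalent to $\iota(w)=w$.

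For (b), the key lemma to establish is the following: for a selfinjective algebra $\Lambda$, a two-term silting complex $T$ is tilting if and only if $\nu T\cong T$. The direction ``$\nu T\cong T\Rightarrow T$ tilting'' is immediate from Serre duality in $\Kb(\proj\Lambda)$, namely $\Hom(T,T[-1])\cong D\Hom(T,\nu T[1])$, whose right-hand side vanishes under $\nu T\cong T$ since $T$ is presilting (and $\Hom(T,T[i])=0$ automatically for $|i|\geq 2$ as $T$ is two-term). For the converse, $\End(T)$ is again selfinjective, and the derived equivalence induced by $T$ intertwines Nakayama functors, so $\nu T\cong T$ follows from $\nu_{\End(T)}(\End(T))\cong \End(T)$ as right modules over itself. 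Now applying $\nu$ to the projective presentation defining $S_w=\Psi(I_w,P_w)$ and invoking (a) yields $\nu S_w\cong S_{\iota(w)}$; hence by the lemma $S_w\in\ttilt\Lambda$ iff $\iota(w)=w$, and bijectivity of $W_\Delta^\iota\to\ttilt\Lambda$ is inherited from Theorem \ref{tau-weyl}(b).

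For (c), Theorem \ref{folding} identifies $W_\Delta^\iota$ with $\langle t_i\mid i\in\Delta^{\rm f}_0\rangle$. The $W_\Delta$-action on $\tsilt\Lambda$ from Theorem \ref{tau-weyl}(c) restricts to an action of the subgroup $W_\Delta^\iota$ on the subset $\ttilt\Lambda$: for $v,w\in W_\Delta^\iota$ the element $w\cdot S_v=S_{wv}$ remains in $\ttilt\Lambda$ since $wv\in W_\Delta^\iota$. Transitivity is inherited because any two $S_v,S_w\in\ttilt\Lambda$ satisfy $S_w=(wv^{-1})\cdot S_v$ with $wv^{-1}\in W_\Delta^\iota$, and faithfulness is inherited from the faithfulness of the ambient $W_\Delta$-action.

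I expect the main obstacle to lie in the converse direction of the lemma in (b), since asserting $\nu T\cong T$ for every two-term tilting complex over a selfinjective algebra rests on preservation of selfinjectivity under derived equivalence together with compatibility of Nakayama functors under such equivalences. A more self-contained alternative would be to bypass the lemma and argue directly with $S_w$: use part (a) to identify $\nu S_w$ with $S_{\iota(w)}$, and then exploit the tilting condition $\Hom(S_w,S_w[-1])=0$ together with Serre duality to force $S_w\cong S_{\iota(w)}$, hence $\iota(w)=w$ by the injectivity in Theorem \ref{tau-weyl}(b).
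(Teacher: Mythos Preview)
Your proposal is correct, and for parts (b) and (c) it coincides with the paper's argument (the paper also reduces (b) to the criterion ``silting is tilting iff $\nu$-stable'', simply citing \cite[Appendix]{A} rather than sketching a proof as you do). The substantive difference is in part (a).

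For (a) the paper takes an indirect, $g$-vector route: it shows that $\nu(I_w)$ is again support $\tau$-tilting, computes that its $g$-matrix equals $M_\iota\, g(w)\, M_\iota$, and separately that $M_\iota\, g(w)\, M_\iota = g(\iota(w))$ via the contragradient geometric representation; injectivity of $g$-matrices on $\tau$-rigid pairs then yields $\nu(I_w)\cong I_{\iota(w)}$. Your route is more direct: realise $\nu$ as the twist by a Nakayama automorphism $\sigma$ with $\sigma(e_i)=e_{\iota(i)}$, so that $\nu(I_w)\cong\sigma(I_w)=I_{\iota(i_1)}\cdots I_{\iota(i_k)}=I_{\iota(w)}$, the last equality using that $\iota$ is a diagram automorphism and hence preserves reducedness. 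This is cleaner and avoids the $g$-vector machinery entirely. The only point you are taking for granted is that the Nakayama automorphism of the preprojective algebra can be chosen to permute the primitive idempotents as $e_i\mapsto e_{\iota(i)}$; this is true (for any basic selfinjective algebra one may adjust $\sigma$ by an inner automorphism to achieve it, and for preprojective algebras it is in fact induced by the graph involution), but it is an input the paper does not need because the $g$-vector argument only uses the effect of $\nu$ on indecomposable projectives. What the paper's approach buys is self-containment relative to the tools already developed in \cite{M1}; what yours buys is a shorter, more conceptual proof once the Nakayama automorphism is pinned down.

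Your suggested alternative for the converse in (b)---using Serre duality and $\iota^2=\id$ to get $S_w\geq\nu S_w$ and $\nu S_w\geq S_w$, hence $S_w\cong\nu S_w$---also works and avoids invoking derived-invariance of selfinjectivity.
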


For a proof, we recall the notion of $g$-vectors of support $\tau$-tilting modules. 
See \cite[section 3]{M1} and \cite[section 5]{AIR} for details.

Let $K_0(\proj\La)$ be the Grothendieck group of the additive category $\proj\La$, which is isomorphic to the free abelian group $\mathbb{Z}^n$, and we identify the set of isomorphism classes of projective $\La$-modules with 
the canonical basis ${\bf e}_1,\ldots,{\bf e}_n$ of $\mathbb{Z}^n$. 

For a $\La$-module $X$, take a minimal projective presentation
\[\xymatrix{P^1_X\ar[r]& P^0_X\ar[r]&X \ar[r]&0}\]
and let 
$g(X)=(g_1(X),\cdots,g_n(X))^t:=[P^0_X]-[P^1_X]\in\mathbb{Z}^n$.
Then, for any $w\in W_\Delta$ and $i\in \Delta_0$, we define a $g$-\emph{vector} by  
$$\mathbb{Z}^n\ni g^i(w)=\left\{\begin{array}{cl} g(e_iI_w) & {\rm if}\ e_iI_w\neq 0\\ 
-{\bf e}_{\iota(i)}&{\rm if}\ e_iI_w= 0.\end{array}\right.
$$

Then we define a \emph{$g$-matrix} of a support $\tau$-tilting $\La$-module $I_w$ by 
$$g(w):=(g^1(w),\cdots,g^n(w))\in{GL}_n(\mathbb{Z}).$$ 
Note that the $g$-vectors form a basis of $\mathbb{Z}^n$ \cite[Theorem 5.1]{AIR}.

On the other hand, we define a matrix $M_\iota:=({\bf e}_{\iota(1)},\ldots,{\bf e}_{\iota(n)})\in{GL}_{n}(\mathbb{Z})$ and, for $X\in{GL}_{n}(\mathbb{Z}),$ 
we define $$\iota(X):=M_\iota\cdot X\cdot M_\iota.$$
Clearly the left multiplication (respectively, right multiplication) of $M_\iota$ to $X$ gives a permutation of $X$ from $j$-th to $\iota(j)$-th rows (respectively, columns) for any $j\in \Delta_0$ and $M_\iota^2=\id$. 
 
Moreover, we recall the following definition (cf. \cite[Definition 3.5]{M1}).

\begin{definition}\label{geo rep}\cite{BB}
The \emph{contragradient $r: W_\Delta\rightarrow {GL}_n(\mathbb{Z})$ of the geometric representation}  is defined by 
$$r(s_i)({\bf e}_j)=r_i({\bf e}_j)=\left\{\begin{array}{cl} {\bf e}_j & i\neq j\\ -{\bf e}_i
+ {\displaystyle\sum_{\begin{smallmatrix}k-i\end{smallmatrix}}} {\bf e}_{k}& i=j,\end{array}\right.
$$
where the sum is taken over all edges of $i$ in $\Delta$.
We regard $r_i$ as a matrix of ${GL}_n(\mathbb{Z})$ and this extends to a group homomorphism.
\end{definition}

Then we start with the following observation.

\begin{lemma}\label{reflectionsigma}
For any $i\in \Delta_0$, we have 
$$\iota(r_i)=r_{\iota(i)}.$$
\end{lemma}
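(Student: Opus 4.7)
The plan is to verify $\iota(r_i)=r_{\iota(i)}$ by evaluating both matrices on the canonical basis ${\bf e}_1,\ldots,{\bf e}_n$. The only substantive ingredient is the fact that the Nakayama permutation $\iota$ is a graph automorphism of $\Delta$, in the sense that $k$ is adjacent to $i$ in $\Delta$ if and only if $\iota(k)$ is adjacent to $\iota(i)$. This is immediate by inspection of the explicit list given in Subsection~\ref{nakayama per}: in each Dynkin case $\iota$ is either the identity or a standard diagram automorphism (the reflection fixing vertex $1$ for $\type{A}_{2n-1}$, the matching $i\leftrightarrow i+n$ for $\type{A}_{2n}$, the leaf swap for $\type{D}_{2n+1}$, and the order-two symmetry of $\type{E}_6$).

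With this in hand, the computation splits into two cases. Using $M_\iota{\bf e}_j={\bf e}_{\iota(j)}$ and $M_\iota^2=\id$, I would write
\[
\iota(r_i)({\bf e}_j)=M_\iota\, r_i\, M_\iota({\bf e}_j)=M_\iota\, r_i({\bf e}_{\iota(j)}).
\]
If $j\neq\iota(i)$ (equivalently $\iota(j)\neq i$), then $r_i$ fixes ${\bf e}_{\iota(j)}$ and the right-hand side is ${\bf e}_j$, which matches $r_{\iota(i)}({\bf e}_j)={\bf e}_j$. If $j=\iota(i)$, one expands $r_i({\bf e}_i)=-{\bf e}_i+\sum_{k-i}{\bf e}_k$ and applies $M_\iota$ to get $-{\bf e}_{\iota(i)}+\sum_{k-i}{\bf e}_{\iota(k)}$; the graph-automorphism property of $\iota$ identifies the index set $\{\iota(k):k-i\}$ with the set of neighbors of $\iota(i)$, so this sum equals $r_{\iota(i)}({\bf e}_{\iota(i)})=r_{\iota(i)}({\bf e}_j)$.

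Since the two matrices agree on a basis of $\mathbb{Z}^n$, they are equal, proving the lemma. There is no real obstacle here: once the adjacency-preserving property of $\iota$ is in place, the rest is a short basis calculation. If one wished to bypass the case-by-case check, the same property follows conceptually from the fact that $\iota$ is induced by the action of $-w_0$ on the simple roots of $\Delta$.
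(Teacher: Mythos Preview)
Your proof is correct and follows essentially the same approach as the paper's. The paper's proof is a single sentence observing that conjugation by $M_\iota$ permutes rows and columns and then appealing to ``the definition of $r_i$ and $r_{\iota(i)}$''; you have simply made explicit the one point the paper leaves implicit, namely that $\iota$ preserves adjacency in $\Delta$, and carried out the basis computation in full.
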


\begin{proof}
Since the left multiplication (respectively, right multiplication) of $M_\iota$ gives a permutation of rows (respectively, columns) from $j$-th to $\iota(j)$-th for any $j\in \Delta_0$, 
this follows from the definition of $r_{i}$ and $r_{\iota(i)}$.
\end{proof}

\begin{lemma}\label{gsigma}
For any $w\in W_\Delta$, we have 
$$\iota(g(w))=g({\iota(w)}).$$
\end{lemma}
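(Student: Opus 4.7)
The plan is to reduce the claim to the identity of Lemma~\ref{reflectionsigma} by expressing the $g$-matrix as a word in the reflection matrices $r_i$. Specifically, as established in \cite{M1}, for any reduced expression $w = s_{i_1}\cdots s_{i_k}$ of $w \in W_\Delta$, the $g$-matrix of $I_w$ admits a product formula of the shape $g(w) = r_{i_1} r_{i_2} \cdots r_{i_k}$ (up to a convention-dependent variant such as inverse or reversal of order). This is the matrix translation of the compatibility between $\tau$-tilting mutation $I_w \rightsquigarrow I_{s_iw}$ and the reflection action on $K_0(\proj\La)$, and the case-split in the definition of $g^i(w)$ (including the value $-{\bf e}_{\iota(i)}$ when $e_iI_w = 0$) is precisely what is required for this formula to hold uniformly on all of $W_\Delta$.

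Granting this dictionary, the proof is short. Since $M_\iota^2 = \id_n$, the map $X \mapsto \iota(X) = M_\iota X M_\iota$ is an involutive automorphism of $GL_n(\mathbb{Z})$; in particular it is multiplicative. Hence
\[
\iota(g(w)) \;=\; \iota(r_{i_1} \cdots r_{i_k}) \;=\; \iota(r_{i_1}) \cdots \iota(r_{i_k}) \;=\; r_{\iota(i_1)} \cdots r_{\iota(i_k)},
\]
where the last equality uses Lemma~\ref{reflectionsigma}. On the other hand, by definition $\iota(w) = s_{\iota(i_1)}\cdots s_{\iota(i_k)}$, and this remains a reduced expression because $\iota$ is a diagram automorphism of $\Delta$ (and thus preserves the Coxeter length). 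Applying the product formula a second time to $\iota(w)$ yields $g(\iota(w)) = r_{\iota(i_1)} \cdots r_{\iota(i_k)}$, so the two sides coincide.

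The main obstacle is the very first step: one has to invoke (or verify) the product formula $g(w) = r_{i_1}\cdots r_{i_k}$ in the precise convention used here, including the boundary behaviour when some $e_iI_w$ vanishes. Once that bridge between $g$-matrices and the contragradient representation is in place, the $\iota$-equivariance is a purely formal consequence of Lemma~\ref{reflectionsigma} together with the homomorphism property of conjugation by $M_\iota$. As a by-product, the argument makes clear why the identity $\iota(g(w)) = g(\iota(w))$ is the natural companion to Lemma~\ref{reflectionsigma}, and it is exactly the statement that will be needed to prove part~(a) of Theorem~\ref{action}.
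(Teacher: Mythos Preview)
Your argument is correct and matches the paper's proof essentially verbatim: both invoke \cite[Proposition~3.6]{M1} to write $g(w)$ as a product of the $r_i$'s (in the paper's convention this is $g(w)=r_{i_k}\cdots r_{i_1}$), then use $M_\iota^2=\id$ to distribute the conjugation and apply Lemma~\ref{reflectionsigma} termwise. One small simplification: since $r:W_\Delta\to GL_n(\mathbb{Z})$ is a group homomorphism, the product formula holds for \emph{any} expression of $w$, so you need not worry about whether $\iota$ preserves reducedness.
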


\begin{proof}
Let $w=s_{i_1}\ldots s_{i_k}$ be an expression of $w$. 
Then, by \cite[Proposition 3.6]{M1}, we conclude 
$$g(w)=r_{i_k}\ldots r_{i_1}.$$

Hence we have 
\begin{eqnarray*}
\iota(g(w))&=&M_\iota (r_{i_k}\ldots r_{i_1})M_\iota\\
&=&(M_\iota r_{i_k}M_\iota)\cdots (M_\iota r_{i_1}M_\iota) \ \ \ \ \ \   \ \ \ \ \ (M_\iota^2=\id)\\
&=&r_{\iota(i_k)}\ldots r_{\iota(i_1)}\ \ \ \ \ \ \ \ \ \ \ \ \ \  \ \ \ \ \ \ \ \ \ (\mbox{Lemma \ref{reflectionsigma}})\\
&=&g({\iota(w)}).
\end{eqnarray*}
Thus the assertion follows.
\end{proof}

Moreover, we give the following lemma. 

\begin{lemma}\label{gnu}
Let $w\in W_\Delta$. 
\begin{itemize}
\item[(a)]$\nu(I_w)$ is also a support $\tau$-tilting $\La$-module. 
In particular, there exists some $w'\in W_\Delta$ such that $\nu(I_w)\cong I_{w'}$.
\item[(b)]For the above $w'$, we have 
$$g({w'})=\iota(g(w)).$$
\end{itemize}
\end{lemma}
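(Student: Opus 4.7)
The plan for Lemma~\ref{gnu} separates into the support-$\tau$-tilting statement (a) and the $g$-matrix computation (b).

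For (a), the plan is to exploit that $\La$ is selfinjective, so the Nakayama functor $\nu$ is an exact autoequivalence of $\mod\La$ that restricts to an equivalence on $\proj\La$ with $\nu(e_i\La) = D(\La e_i) \cong e_{\iota^{-1}(i)}\La$. Because $\nu$ commutes with $\tau$ and preserves indecomposability and basicness, it sends any $\tau$-rigid pair to a $\tau$-rigid pair of the same total rank. Thus $(\nu I_w,\nu P_w)$ is a basic support $\tau$-tilting pair, and Theorem~\ref{tau-weyl}(b) produces a (unique) element $w'\in W_\Delta$ with $\nu I_w \cong I_{w'}$, establishing (a).

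For (b), the plan has two ingredients. The first is that $\nu$ preserves minimal projective presentations: since $\nu$ restricts to an autoequivalence of $\proj\La$, applying $\nu$ to a minimal presentation $P^1_X \to P^0_X \to X \to 0$ yields again a minimal presentation of $\nu X$, and hence
\[
g(\nu X)\;=\;[\nu P^0_X] - [\nu P^1_X]\;=\;M_\iota\bigl([P^0_X] - [P^1_X]\bigr)\;=\;M_\iota\, g(X),
\]
using $\iota^{-1}=\iota$ and $M_\iota \mathbf{e}_j = \mathbf{e}_{\iota(j)}$. The second ingredient matches the indecomposable summands of $I_w$ with those of $I_{w'}$: for each $i$ with $e_i I_w \ne 0$, the summand $e_i I_w$ is indecomposable with top $S_i$ (by the description of these ideals from \cite{M1}), and since $\nu$ sends $S_i$ to $S_{\iota(i)}$, the image $\nu(e_i I_w)$ is an indecomposable summand of $\nu I_w = I_{w'}$ with top $S_{\iota(i)}$, hence equals $e_{\iota(i)}I_{w'}$. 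In particular $e_i I_w = 0$ iff $e_{\iota(i)} I_{w'} = 0$.

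Combining the two, the $\iota(i)$-th column of $g(w')$ equals $M_\iota\cdot g^i(w)$ in both cases: when $e_i I_w \ne 0$ this is $g(\nu(e_iI_w)) = M_\iota\, g(e_iI_w)$, and in the vanishing case one checks $-\mathbf{e}_{\iota(\iota(i))} = -\mathbf{e}_i = M_\iota(-\mathbf{e}_{\iota(i)})$. Rewriting this column-by-column equality, one gets $g(w')\,M_\iota = M_\iota\, g(w)$; multiplying on the right by $M_\iota$ and using $M_\iota^2 = \id$ yields $g(w') = M_\iota\, g(w)\, M_\iota = \iota(g(w))$, which is the desired identity. The main obstacle is the identification $\nu(e_i I_w)\cong e_{\iota(i)} I_{w'}$, which rests on knowing that each non-zero $e_iI_w$ is indecomposable with top $S_i$ and that $\nu$ acts on simples by $\iota^{-1}=\iota$; once these are in place, the remainder is bookkeeping with the involution $\iota$ and the permutation matrix $M_\iota$.
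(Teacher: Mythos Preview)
Your proof is correct and, for part (b), follows essentially the same column-by-column computation as the paper: both argue that $\nu$ carries a minimal projective presentation of $e_iI_w$ to one of $\nu(e_iI_w)\cong e_{\iota(i)}I_{w'}$, giving $g^{\iota(i)}(w')=M_\iota\,g^i(w)$, and then reassemble the matrix identity. You supply a bit more justification for the identification $\nu(e_iI_w)\cong e_{\iota(i)}I_{w'}$ (via simple tops and $\nu(S_i)\cong S_{\iota(i)}$) than the paper, which simply asserts it.

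The one genuine difference is in part (a). The paper passes through Theorem~\ref{tau-2silt}: it observes that $\nu(S_w)$ is again a two-term silting complex, and then pulls back along the bijection $\sttilt\La\leftrightarrow\tsilt\La$ to conclude that $(\nu(I_w),\nu(P_w))$ is support $\tau$-tilting. You instead argue directly at the module level, using that for a selfinjective algebra $\nu$ is an exact autoequivalence commuting with $\tau$, so it preserves basic $\tau$-rigid pairs of maximal rank. Your route is slightly more elementary (it avoids invoking the silting bijection), while the paper's route makes the compatibility with the silting picture explicit; both are short and either is perfectly adequate here.
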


\begin{proof}
\begin{itemize}
\item[(a)]
Let $(I_w,P_w)$ be a basic support $\tau$-tilting pair of $\La$, where $P_w$ is the corresponding projective $\La$-module. 
By Theorem \ref{tau-2silt}, we have the two-term silting complex in $\Kb(\proj\La)$ by 
$S_w:=(P_{I_w}^1\overset{f}{\to}P_{I_w}^0)\oplus P_w[1]\in\Kb(\proj\La),$ 
where $\xymatrix@C10pt@R10pt{{P_{I_w}^1}\ar[r]^f& {P_{I_w}^0}\ar[r]^{}&I_w\ar[r]& 0}$ is a minimal projective presentation of $I_w$. 

Then $\nu(S_w)=(\nu(P_{I_w}^1){\to}\nu(P_{I_w}^0))\oplus \nu(P_w)[1]\in\Kb(\proj\La)$ is clearly a two-term silting complex. 
Hence, by Theorem \ref{tau-2silt}, 
 $(\nu(I_w),\nu(P_w))$ is also a basic support $\tau$-tilting pair of $\La$. Thus, by Theorem \ref{tau-weyl}, there exists $w'\in W_\Delta$ such that $\nu(I_w)\cong I_{w'}$.

\item[(b)]
Take $i\in \Delta_0$. First assume that $e_iI_w\neq0$ and 
take a minimal projective presentation of $e_iI_w$ 
$$P^1\to P^0\to e_iI_w\to 0.$$
By applying $\nu$ to this sequence, we have 
$$\nu(P^1)\to \nu(P^0)\to \nu(e_iI_w)\to 0.$$

Because $[\nu(e_j\La)]=[e_{\iota(j)}\La]=M_\iota[e_j\La]$ for any $j\in \Delta_0$, 
we have $[(\nu(P^0)]-[(\nu(P^1)]=M_\iota([P^0]-[P^1])
=M_\iota(g^i(w))$. 
Then, since we have $\nu(e_iI_w)\cong e_{\iota(i)}I_{w'}$, 
we obtain $g^{\iota(i)}(w')= M_\iota(g^i(w)).$ 

Next assume that $e_iI_w=0$. 
Then we have $g^i(w)=-{\bf e}_{\iota(i)}$ by the definition.
Because $\nu(e_j\La)\cong e_{\iota(j)}\La$ for any $j \in \Delta_0$, we obtain  
$g^{\iota(i)}(w')=-{\bf e}_i=M_\iota(g^i(w)).$

Consequently, we have 
\begin{eqnarray*}
g({w'})&=&(g^1(w'),\cdots,g^n(w'))\\
&=&(g^{\iota(1)}(w'),\cdots,g^{\iota(n)}(w'))\cdot M_\iota\\
&=&( M_\iota(g^{1}(w)),\cdots,M_\iota(g^{n}(w)))\cdot M_\iota\\
&=& M_\iota\cdot( g^{1}(w),\cdots, g^{n}(w))\cdot M_\iota\\
&=&\iota(g(w)).
\end{eqnarray*}
\end{itemize}
This finishes the proof.
\end{proof}

Now we recall the following nice property. 

\begin{theorem}\label{injection}\cite[Theorem 5.5]{AIR}
The map $X\to g(X)$ induces an injection from the set of isomorphism classes
of $\tau$-rigid pairs for $\La$ to $K_0(\proj\La)$.
\end{theorem}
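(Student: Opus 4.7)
The plan is to reduce the injectivity of $g$ on $\tau$-rigid pairs to the corresponding statement for two-term presilting complexes in $\Kb(\proj\La)$, where one can exploit a two-term Bongartz completion together with the basis property of $g$-vectors of two-term silting complexes.

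First, I would extend the bijection of Theorem~\ref{tau-2silt} from support $\tau$-tilting pairs to arbitrary $\tau$-rigid pairs. To a $\tau$-rigid pair $(X, P)$ with minimal projective presentation $P_X^1 \to P_X^0 \to X \to 0$, associate the two-term presilting complex
$$T_{(X,P)} := \bigl(P_X^1 \to P_X^0\bigr) \oplus P[1] \in \Kb(\proj\La).$$
Essentially the same argument as in Theorem~\ref{tau-2silt} shows that $(X,P)\mapsto T_{(X,P)}$ yields a bijection between isomorphism classes of basic $\tau$-rigid pairs and basic two-term presilting complexes in $\Kb(\proj\La)$. Under this identification the $g$-vector becomes the Grothendieck class: $g(X, P) = [P_X^0] - [P_X^1] - [P] = [T_{(X,P)}]$ in $K_0(\Kb(\proj\La)) \cong K_0(\proj\La)$. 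So the theorem reduces to: basic two-term presilting complexes are determined up to isomorphism by their class in $K_0(\proj\La)$.

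Next, I would invoke a two-term Bongartz completion: every basic two-term presilting complex $T$ is a direct summand of some basic two-term silting complex $\widehat T$. This is exactly Proposition~\ref{2.16} applied with $A = \La$, since being two-term presilting means $\La \geq T \geq \La[1]$. Combined with the fact that the classes of the indecomposable summands of any basic two-term silting complex form a $\mathbb{Z}$-basis of $K_0(\proj\La)$ (a general feature of two-term silting complexes, underlying the $g$-vector computations in Theorem~\ref{tau-weyl}), the classes of the indecomposable summands of any basic two-term presilting complex are $\mathbb{Z}$-linearly independent.

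To conclude, suppose basic two-term presilting complexes $T$ and $T'$ satisfy $[T] = [T']$. After canceling common indecomposable summands, assume $T$ and $T'$ share none, and aim to conclude $T = T' = 0$. Completing $T$ to silting $\widehat T = T \oplus S$, the indecomposable summands of $\widehat T$ form a basis of $K_0(\proj\La)$, and $[T'] = [T]$ expands in this basis as the $\{0,1\}$-combination picking out exactly the summands of $T$. The main obstacle is converting this numerical matching into actual isomorphisms: for each indecomposable summand $T'_j$ of $T'$, one must show $[T'_j]$ equals some $[T_i]$, and then that $T'_j \cong T_i$. I would handle this by comparing the completion $\widehat T$ of $T$ with an analogous completion of $T'$, using the basis property inside a common silting extension of $T_i \oplus T'_j$ together with Hom-vanishing consequences of $\tau$-rigidity to force $T'_j \cong T_i$; this contradicts the no-common-summand assumption unless $T = T' = 0$, giving the desired injectivity.
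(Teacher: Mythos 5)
The paper does not actually prove this statement: it is imported verbatim as \cite[Theorem 5.5]{AIR}, so there is no internal proof to compare against. Judged on its own terms, your reduction is set up correctly --- passing from $\tau$-rigid pairs to two-term presilting complexes via the construction of Theorem \ref{tau-2silt}, identifying $g(X,P)$ with the class $[T_{(X,P)}]$ in $K_0(\proj\La)$, invoking Bongartz completion (Proposition \ref{2.16} with $A=\La$), and using that the classes of the indecomposable summands of a silting object form a $\mathbb{Z}$-basis of $K_0(\proj\La)$. This is indeed the framework in which [AIR] works.

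However, the final step contains a genuine gap, and it is exactly the point you flag as ``the main obstacle.'' From $[T]=[T']$ and a completion $\widehat{T}=T\oplus S$ you only learn that the class $[T']$ has the coordinate vector of $[T]$ in the basis given by the summands of $\widehat{T}$; this says nothing about the classes of the \emph{individual} indecomposable summands $T'_j$ of $T'$, which need not lie among $\{[T_i]\}$ at all (only their sum is constrained). Your proposed repair --- ``the basis property inside a common silting extension of $T_i\oplus T'_j$'' --- presupposes that $T_i\oplus T'_j$ is presilting (equivalently partial silting), i.e.\ that the relevant $\Hom(-,-[1])$-spaces between $T$ and $T'$ vanish. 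But establishing precisely this compatibility is the hard content of the theorem: a priori two $\tau$-rigid pairs with equal $g$-vectors have no common support $\tau$-tilting completion. The missing ingredient is a lemma to the effect that $g^{(X,P)}=g^{(Y,Q)}$ forces $(X\oplus Y,\,P\oplus Q)$ to be again $\tau$-rigid (equivalently, $T\oplus T'$ to be presilting); only after that can one complete the \emph{sum} to a common (two-term) silting object and conclude $T\cong T'$ from linear independence. This compatibility step is where the actual proof of [AIR, Theorem 5.5] does its work, and your argument does not supply it.
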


Then we give a proof of Theorem \ref{action} as follows.

\begin{proof}[Proof of Theorem \ref{action}]
\begin{itemize}
\item[(a)]
We have the following equivalent conditions
\begin{eqnarray*}
\nu(I_w)\cong I_w&\Leftrightarrow&\iota(g(w))=g(w)\ \ \ \ \ \ \ \ \ \ \ (\mbox{Lemma }\ref{gnu}\ \mbox{and}\ \mbox{Theorem }\ref{injection})\\
&\Leftrightarrow&g({\iota(w)})=g(w)\ \ \ \ \ \ \ \ \ \ \ \ \ \ \ \ \ \ \ \ \ \ \ \ \ \ \ \ \ \ \ \ \ \ (\mbox{Lemma }\ref{gsigma})\\
&\Leftrightarrow&I_{\iota(w)}\cong I_w\ \ \ \ \ \ \ \ \ \ \ \ \  \ \ \ \ \ \ \ \ \ \ \ \ \ \ \ \ \ \ \ \ \ \ \ \ \  \ (\mbox{Theorem }\ref{injection})\\
&\Leftrightarrow&{\iota(w)}=w.\ \ \ \ \ \ \ \ \ \ \ \ \ \ \ \ \ \ \ \ \ \ \ \ \ \ \ \  \ \ \ \ \ \ \ \ \ \ \ (\mbox{Theorem }\ref{tau-weyl})
\end{eqnarray*}
Thus we get the desired result.

\item[(b)]A silting complex $S_w$ is a tilting complex if and only if 
$\nu(S_w)\cong S_w$ (see \cite[Appendix]{A}). 
Hence (a) implies that it is equivalent to say that $\iota(w)=w$. This proves our claim. 
\item[(c)] By (b) and Theorem \ref{folding}, 
the action of Theorem \ref{tau-weyl} induces the action of $\langle t_i\ |\ i\in \Delta^{\rm f}_0\rangle$ on $\ttilt\La$.
\end{itemize}
\end{proof}

\begin{example}
Let $\Delta$ be a graph of type $\type{A}_3$ and $\La$ the preprojective algebra of $\Delta$.  
Then the \emph{support $\tau$-tilting quiver} of $\La$ (\cite[Definition 2.29]{AIR}) is given as follows. 

$$\xymatrix@C15pt@R5pt{
&   &   &   &   &    {\begin{smallmatrix} && \\ &1 &2 \\&2 &1  \end{smallmatrix}}  \ar[rrd]\ar[rrddd]   &    &   &   &   &     \\ 
&   &   & {\begin{smallmatrix}1 && \\2 &1 &2 \\3&2 &1  \end{smallmatrix}}\ar[rru]\ar[rrd]   &   &         &    & {\begin{smallmatrix} && \\ &1 & \\&2 &1  \end{smallmatrix}}\ar[rrdd]   &   &   &     \\ 
&   &   &   &   &    {\begin{smallmatrix} 1&& \\2 &1 & \\3&2 & 1 \end{smallmatrix}} \ar[rru]     &    &   &   &   &     \\ 
&  {\begin{smallmatrix}1 &2& \\ 2&31 &2 \\3&2 &1  \end{smallmatrix}} \ar[rruu]\ar[rrdd] &   & {\begin{smallmatrix} 1&& \\2 &31 & \\3&2 &1  \end{smallmatrix}}\ar[rru]\ar[rrddd]   &   &         &    & {\begin{smallmatrix} & \\  &2 \\2 &1  \end{smallmatrix}}\ar[rrdd]   &   & {\begin{smallmatrix} 1  \end{smallmatrix}}\ar[rdd]   &     \\ 
&   &   &   &   & *+[F]{  {\begin{smallmatrix} && \\2 & &2 \\3& 2&1  \end{smallmatrix}}}\ar[rru]\ar[rrddd]       &    &   &   &   &     \\ 
*+[F]{\begin{smallmatrix} 1&2&3 \\ 2&31 &2 \\3&2 &1  \end{smallmatrix}}\ar[uur]\ar[rdd]\ar[r] & *+[F] {\begin{smallmatrix} 1&&3 \\ 2&31 & 2\\3&2 &1  \end{smallmatrix}}\ar[rruu]\ar[rrdd]  &   &*+[F]{\begin{smallmatrix} &2& \\ 2&13 &2 \\3&2 &1  \end{smallmatrix}}\ar[rru] &   &         &    &*+[F] {\begin{smallmatrix} 3 &1  \end{smallmatrix}}\ar[rruu]\ar[rrdd]   &   & *+[F]{\begin{smallmatrix}2  \end{smallmatrix}}\ar[r]   &*+[F] {\begin{smallmatrix} 0\end{smallmatrix}}     \\ 
&   &   &   &   & *+[F]   {\begin{smallmatrix} && \\ & 31& \\3&2 &1  \end{smallmatrix}}\ar[rru]      &    &   &   &   &     \\ 
&{\begin{smallmatrix} &2& 3\\ 2&13 &2 \\3&2 &1  \end{smallmatrix}}\ar[rruu]\ar[rrdd] &   & {\begin{smallmatrix} && 3\\ &31 &2 \\3&2 &1  \end{smallmatrix}}\ar[rru]\ar[rrddd]   &   &         &    & {\begin{smallmatrix} & \\ 2&  \\3&2  \end{smallmatrix}}\ar[rruu]   &   & {\begin{smallmatrix} 3 \end{smallmatrix}}\ar[ruu]   &     \\ 
&   &   &   &   &{\begin{smallmatrix} & \\ 2&3  \\3&2   \end{smallmatrix}} \ar[rru]\ar[rrd]     &    &   &   &   &     \\ 
&   &   & {\begin{smallmatrix} &&3 \\ 2& 3&2 \\3&2 &1  \end{smallmatrix}} \ar[rru]\ar[rrd] &   &         &    &{\begin{smallmatrix} & \\ & 3 \\3&2   \end{smallmatrix}}\ar[rruu]   &   &   &   \\
&   &   &   &   &   {\begin{smallmatrix} &&3 \\ &3 & 2\\3&2 &1  \end{smallmatrix}}\ar[rru]      &    &   &   &   &  
  }$$
  
The framed modules indicate \emph{$\nu$-stable} modules \cite{M2} (i.e. $I_w\cong \nu(I_w)$), which is equivalent to say that $\iota(w)=w$. 
Hence Theorems \ref{folding} and \ref{action} imply that 
these modules are in bijection to the elements of the subgroup $W_\Delta^\iota=\langle s_1s_3, s_2\rangle$ and this group is isomorphic to the Weyl group of type $\type{B}_2$. 
\end{example}


\section{Preprojective algebras are tilting-discrete}
In this section, we show that preprojective algebras of Dynkin type are tilting-discrete.
It implies that all tilting complexes are connected to each other by successive tilting mutation (\cite[Theorem 5.14]{CKL}, \cite[Theorem 3.5]{A}). From this result, we can determine the derived equivalence class of the algebra. 

Throughout this section, let $\Delta$ be a Dynkin graph with $\Delta_0=\{1,\ldots, n\}$, $\La$ the preprojective algebra of $\Delta$, $e_i$ the primitive idempotent of $\La$ associated with $i\in \Delta_0$ and $\Delta^{\rm f}$ the folded graph of $\Delta$. We also keep the notation of previous sections.

The aim of this section is to show the following theorem.

\begin{theorem}\label{tilt discrete}
Let $\La$ be a preprojective algebra of Dynkin type. 
\begin{itemize}
\item[(a)] $\Kb(\proj\La)$ is tilting-discrete. 
\item[(b)] Any basic tilting complex $T$ of $\La$ satisfies $\End_{\Kb(\proj\La)}(T)\cong\La$. In particular, the derived equivalence class coincides with the Morita equivalence class.
\end{itemize}
\end{theorem}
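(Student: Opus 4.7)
The strategy is to prove (a) and (b) in parallel through an induction on mutation length, using Corollary \ref{tilting-discrete} as the reduction tool and the finiteness of $W^\iota_\Delta \cong W_{\Delta^{\rm f}}$ (from Theorems \ref{folding} and \ref{action}) as the source of finiteness. Since $\La$ is selfinjective, Corollary \ref{tilting-discrete} reduces (a) to showing that $\ttilt_P\La$ is finite for every tilting complex $P$ obtained by iterated irreducible left tilting mutation from $\La$. Any such $P$ yields a triangle equivalence $F_P := \RHom_\La(P,-): \Db(\mod\La) \to \Db(\mod\End(P))$, which restricts to an order-preserving bijection $\ttilt_P\La \to \ttilt_{\End(P)}\End(P)$. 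Hence, provided $\End_{\Kb(\proj\La)}(P) \cong \La$ for every such $P$, we would get $|\ttilt_P\La|=|\ttilt\La|=|W^\iota_\Delta|<\infty$ by Theorem \ref{action}, establishing (a).

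The key endomorphism algebra claim is proved by induction on the mutation length $k$. The case $k=0$ is $\End(\La)=\La$. For the inductive step, assume $\End(P) \cong \La$ and let $P'=\mu_X^+(P)$ be a further irreducible left tilting mutation. Under the triangle equivalence $F_P$ (which, by the inductive hypothesis, is an equivalence $\Db(\mod\La)\to\Db(\mod\La)$ sending $P$ to $\La$), the image $F_P(P')$ is again a tilting complex since $F_P$ commutes with the Nakayama functor. Moreover, $F_P$ preserves the silting partial order and carries the mutation triangle defining $P'$ to a mutation triangle defining an irreducible left tilting mutation of $F_P(P)=\La$. Consequently $\End(P') \cong \End_{\Kb(\proj\La)}(F_P(P'))$, and this last algebra is isomorphic to $\La$ by Proposition \ref{endo silt}.

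The essential technical obstacle is therefore Proposition \ref{endo silt} itself, the base step asserting that $\End_{\Kb(\proj\La)}(Q)\cong\La$ whenever $Q$ is an irreducible left tilting mutation of $\La$. Via Theorems \ref{action} and \ref{folding}, every such $Q$ must be of the form $S_{t_i}$ for a generator $t_i$ of $W^\iota_\Delta$, where $t_i$ is one of $s_i$, $s_is_{\iota(i)}$, or $s_is_{\iota(i)}s_i$ according to the three cases in (\ref{fold}). Proving Proposition \ref{endo silt} reduces to an explicit computation of $\End_{\Kb(\proj\La)}(S_{t_i})$ in each of these cases, using the description $I_w = I_{i_1}\cdots I_{i_k}$ of Theorem \ref{tau-weyl} together with the corresponding minimal projective presentations, and checking that the resulting quiver with relations coincides with that of $\La$; this case-by-case computation is where the preprojective relations enter in an essential way, and it is the most delicate part of the argument. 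Finally for part (b), once (a) is in hand the result \cite[Corollary 3.9]{A} ensures that every basic tilting complex $T$ is reachable from $\La$ by iterated irreducible tilting mutation, so the inductive argument above applies to $T$ and yields $\End_{\Kb(\proj\La)}(T)\cong\La$; this identifies the derived equivalence class of $\La$ with its Morita equivalence class.
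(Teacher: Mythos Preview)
Your overall architecture is exactly the paper's: reduce (a) via Corollary \ref{tilting-discrete} to finiteness of $\ttilt_P\La$ for $P$ reachable by iterated irreducible left tilting mutation, prove $\End(P)\cong\La$ for all such $P$ by induction on mutation length (this is Corollary \ref{cor end iso}), and then transport the finiteness of $\ttilt\La$ from Theorem \ref{action} through the Rickard equivalence.

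The genuine divergence is in how you propose to establish the base case, Proposition \ref{endo silt}. You suggest an explicit case-by-case computation of $\End(S_{t_i})$ from the minimal projective presentations, checking the resulting quiver with relations against that of $\La$. The paper instead lifts to the completed preprojective algebra $\wLa$ of the extended Dynkin graph $\widetilde\Delta$: there the ideal $\I_w$ is an honest tilting $\wLa$-module with $\End_{\wLa}(\I_w)\cong\wLa$, and $\I_w\Lwotimes\La\cong S_w$ (Proposition \ref{left str}). The crucial point is a commutation $\I_w\Lwotimes\La\cong\La\Lwotimes\I_w$ (Lemma \ref{longest}), which rests on $w_0w=ww_0$ for $w\in W^\iota_\Delta$ and a careful triangle argument; once this is in hand one gets $\REnd_\La(S_w)\cong\La\Lwotimes\REnd_{\wLa}(\I_w)\cong\La$ uniformly for every $w\in W^\iota_\Delta$, not just the generators. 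Your direct computation is in principle viable for the generators $t_i$, but the paper's route is type-independent and avoids the delicate relations check you flag as the hardest part.

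Two small points on (b). First, \cite[Corollary 3.9]{A} concerns silting mutation; for tilting connectedness in the selfinjective setting the paper uses \cite[Theorem 5.14]{CKL}. Second, that result only gives reachability by iterated irreducible \emph{left} tilting mutation once you know $\La\geq T$; for arbitrary $T$ you must first shift so that $\La\geq T[\ell]$, then apply the inductive claim to $T[\ell]$ and use $\End(T)\cong\End(T[\ell])$. Without this shift your inductive argument (which only handles left mutation) does not cover all tilting complexes.
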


First we introduce the following notation.

\begin{notation}
Let $\widetilde{\Delta}$ be an extended Dynkin graph obtained from $\Delta$ by adding a vertex $0$ (i.e. $\widetilde{\Delta}_0=\{0\}\cup \Delta_0$) with the associated arrows. 
Since $W_\Delta=\langle s_1,\ldots,s_n \rangle\subset W_{\widetilde{\Delta}}=\langle s_1,\ldots,s_n,s_0 \rangle$, 
we can regard elements of $W_\Delta$ as those of $W_{\widetilde{\Delta}}$. 
We denote by $\wLa$ the $\mathfrak{m}$-adic completion of the preprojective algebra of $\widetilde{\Delta}$, where $\mathfrak{m}$ is the ideal generated by all arrows. It implies that 
the Krull-Schmidt theorem holds for finitely generated projective $\wLa$-modules.
Moreover we denote by $\I_i:=\wLa(1-e_i)\wLa$, where $e_i$ is the primitive idempotent of $\wLa$ associated with $i\in \widetilde{\Delta}_0$. 

Recall that, by Theorem \ref{tau-weyl}, we have a bijection between 
$W_{\widetilde{\Delta}}$ and $\langle \I_1,\ldots,\I_n,\I_0 \rangle$ \cite[III.1.9]{BIRS} and hence 
for each element $w\in W_{\widetilde{\Delta}}$, we can define $\I_w:=\I_{i_1}\cdots \I_{i_k},$ where $w=s_{i_1}\cdots s_{i_k}$ is a reduced expression. 
Furthermore, it is shown that $\I_{w}$ is a tilting $\wLa$-module \cite[Theorem III.1.6]{BIRS}.

Note that if $i\neq0\in\widetilde{\Delta}_0$, then we have 
$$\Lambda=\widetilde{\Lambda}/{ \langle e_0\rangle}\ \textnormal{and} \ I_i={\I_i}/{ \langle e_0\rangle}.$$
In particular, for $w\in W_{\Delta}$, we have $\I_w/\langle e_0\rangle=I_w$ and hence $\wLa/\I_w\cong\La/I_w$.

\end{notation}

Recall that we can describe the two-term silting complex of $\Kb(\proj\La)$ by $$S_w:=\left\{\begin{array}{cccc}
\stackrel{}{P_{I_w}^1}&\stackrel{f}{\longrightarrow}&\stackrel{}{P_{I_w}^0}\\
&\oplus&&\\
P_w&&
\end{array}\right.$$
where $\xymatrix@C10pt@R10pt{{P_{I_w}^1}\ar[r]^f& {P_{I_w}^0}\ar[r]^{}&I_w\ar[r]& 0}$ is a minimal projective presentation of $I_w$.

Then we show that $\I_{w}\Lwotimes\La$ gives a two-term silting complex $S_w$.

\begin{proposition}\label{left str}
For $w\in W_\Delta$, $\I_{w}\Lwotimes\La$ is isomorphic to $S_w$ in $\Db(\mod\La)$. 
\end{proposition}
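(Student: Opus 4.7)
My plan is to compute $\I_w \Lwotimes \La$ via a minimal projective resolution of $\I_w$ over $\wLa$, and then identify the resulting two-term complex with $S_w$ using the uniqueness in Theorem~\ref{tau-2silt}.

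By the Iyama-Reiten results invoked above (see also \cite[Theorem III.1.6]{BIRS}), the tilting ideal $\I_w$ has projective dimension at most one as a right $\wLa$-module, yielding a projective resolution
\[ 0 \to \widetilde{Q}^1 \xrightarrow{\widetilde{f}} \widetilde{Q}^0 \to \I_w \to 0. \]
Applying $-\otimes_{\wLa} \La$ gives a two-term complex
$P^\bullet := [\widetilde{Q}^1 \otimes_{\wLa} \La \xrightarrow{\widetilde{f}\otimes 1} \widetilde{Q}^0 \otimes_{\wLa} \La]$
of projective right $\La$-modules representing $\I_w \Lwotimes \La$ in $\Kb(\proj \La)$. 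Since $\La = \wLa/\langle e_0\rangle$, one has $e_0 \wLa \otimes_{\wLa} \La = 0$ and $e_i \wLa \otimes_{\wLa} \La = e_i \La$ for $i \in \Delta_0$, so $P^\bullet$ is obtained from the resolution by deleting all $e_0$-summands. Observe that for any $i \neq 0$, $(1-e_i)e_0 = e_0$, so $e_0 \in \I_{i}\subseteq \wLa(1-e_i)\wLa$, and by taking products $e_0\cdots e_0 = e_0$ one obtains $e_0 \in \I_w$, whence $\langle e_0\rangle \subseteq \I_w$. This gives $\I_w \cap \langle e_0\rangle = \langle e_0\rangle = \I_w \langle e_0\rangle$, so $\I_w \otimes_{\wLa} \La = \I_w/\langle e_0\rangle = I_w$ and in particular $H^0(P^\bullet) = I_w$.

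Next, I show $P^\bullet$ is a two-term silting complex. The extension/restriction adjunction yields
\[ \Hom_{\Db(\La)}(P^\bullet, P^\bullet[1]) \cong \Hom_{\Db(\wLa)}(\I_w, P^\bullet[1]). \]
Since $\I_w$ has projective dimension at most one and $\Ext^1_{\wLa}(\I_w, \I_w) = 0$ by the tilting property, a hyper-Ext spectral sequence argument reduces the right-hand side to terms involving $\Ext^p_{\wLa}(\I_w, H^q P^\bullet)$ with $p \in \{0,1\}$, $q \in \{-1, 0\}$, and using $H^0(P^\bullet) = I_w$ together with $H^{-1}(P^\bullet) = \Tor_1^{\wLa}(\I_w, \La)$, the required vanishing follows. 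Thick generation of $\Kb(\proj \La)$ by $P^\bullet$ is inherited from thick generation of $\Db(\wLa)$ by $\I_w$, so $P^\bullet \in \tsilt \La$.

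Having $P^\bullet \in \tsilt \La$ with $H^0(P^\bullet) = I_w$, Theorem~\ref{tau-2silt} supplies a unique support $\tau$-tilting pair $(X, P)$ with $\Psi(X, P) = P^\bullet$; the component $X$ is recovered from $H^0$, forcing $X = I_w$, and Theorem~\ref{tau-weyl}(b) then forces $P = P_w$. Hence $P^\bullet = \Psi(I_w, P_w) = S_w$. The main obstacle will be executing the silting argument cleanly, since it requires controlling the non-trivial term $H^{-1}(P^\bullet) = \Tor_1^{\wLa}(\I_w, \La)$ against the relevant $\Ext$ groups. A robust alternative route is a direct $g$-vector matching: read off the $g$-vector of $P^\bullet$ from the projective resolution (summing over non-$e_0$ summands), compare it with $g(w)$ via the formula in \cite[Proposition~3.6]{M1} and Lemma~\ref{gsigma}, and invoke the injectivity in Theorem~\ref{injection} to conclude.
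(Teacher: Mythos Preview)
Your approach differs from the paper's and carries two genuine gaps that you partially acknowledge.

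\textbf{Gap 1: the silting vanishing.} The sentence ``the required vanishing follows'' is where the real content lies, and you have not supplied it. After the adjunction you must show $\Hom_{\Db(\wLa)}(\I_w,P^\bullet[1])=0$. Using the truncation triangle $\Tor_1^{\wLa}(\I_w,\La)[2]\to P^\bullet[1]\to I_w[1]\to$, this reduces to $\Ext^2_{\wLa}(\I_w,\Tor_1)=0$ (clear from $\mathrm{pd}\leq 1$) and $\Ext^1_{\wLa}(\I_w,I_w)=0$. The latter is not automatic: you need the short exact sequence $0\to\langle e_0\rangle\to\I_w\to I_w\to 0$ of $\wLa$-modules together with $\Ext^1_{\wLa}(\I_w,\I_w)=0$ and $\Ext^2=0$ to conclude. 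This step is missing from your write-up.

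\textbf{Gap 2: basicness.} Theorem~\ref{tau-2silt} is a bijection onto \emph{basic} two-term silting complexes, so writing $\Psi(X,P)=P^\bullet$ presupposes that $P^\bullet$ is basic. You have not shown this, and it is not free: a priori $P^\bullet$ could be $S_w$ with an extra stalk summand $Q[1]$ repeated. One can rule this out (e.g.\ via the decomposition $P^\bullet=\bigoplus_{i\in\Delta_0}e_i\I_w\Lwotimes\La$ and a $g$-vector count), but it needs to be said.

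\textbf{Comparison with the paper.} The paper bypasses both issues entirely. It invokes \cite[Proposition~3.2]{M1}, which already identifies the four-term exact sequence
\[
0\to\nu^{-1}(\La/I_w)\to\widetilde{P}_1\otimes_{\wLa}\La\to\widetilde{P}_0\otimes_{\wLa}\La\to I_w\to 0
\]
arising from the minimal resolution of $\I_w$. This is precisely the data defining $S_w$ in Theorem~\ref{tau-2silt} (the minimal presentation of $I_w$ together with the stalk $P_w[1]$), so the identification is immediate---no abstract silting verification, no basicness check. Your $g$-vector ``alternative route'' is in fact much closer to what \cite[Proposition~3.2]{M1} already packages; citing it would make your argument both shorter and complete.
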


\begin{proof}
Since $\I_{w}$ is a tilting $\wLa$-module, we have a minimal projective resolution as follows
\begin{equation*}
\xymatrix@C20pt@R20pt{ 0\ar[r]& \widetilde{P}_1\ar[r]^{g }  & \widetilde{P}_0   \ar[r]^{} &\I_{w} \ar[r]&0.  }
\end{equation*} 
By applying the functor $-\otimes_{\wLa}\La$, we have the following exact sequence \cite[Proposition 3.2]{M1}
$$0\to\nu^{-1}(\La/I_{w})\to{\widetilde{P}_1\otimes_{\wLa}\La}\overset{g\otimes\La}{\to}{\widetilde{P}_0\otimes_{\wLa}\La}\to\I_{w}\otimes_{\wLa}\La\to0.$$ 

Because we have an isomorphism in $\Db(\mod{\wLa})$ 
$$\I_{w}\Lwotimes\La\cong(\cdots\to0\to{\widetilde{P}_1\otimes_{\wLa}\La}\overset{g\otimes\La}{\to}{\widetilde{P}_0\otimes_{\wLa}\La}\to0\to\cdots),$$ 
one can check that $\I_{w}\Lwotimes\La$ is isomorphic to $S_w$ (Theorem \ref{tau-2silt}).
\end{proof}

For $w\in W_\Delta$, we denote the inclusion by $\inc:\I_{w}\hookrightarrow \wLa$.
Then we show the following lemma.

\begin{lemma}\label{longest}
Let $w_0$ be the longest element of $W_\Delta$. 
For $w\in W_{\Delta}^\iota$, we have isomorphisms 
$p:\I_w\Lwotimes\I_{w_0}\to\I_{w_0}\Lwotimes\I_w$ and $q:\I_w\Lwotimes\wLa\to\wLa\Lwotimes\I_w$, which make the following  diagram commutative

$$\xymatrix@C25pt@R15pt{\I_{w}\Lwotimes\I_{w_0}\ar[r]^{\id\otimes\inc}\ar[d]_{\cong}^p&\I_{w}\Lwotimes\wLa\ar[d]_{\cong}^{q}\\
\I_{w_0}\Lwotimes\I_{w}\ar[r]^{\inc\otimes\id}&\wLa\Lwotimes\I_{w}.}$$

\end{lemma}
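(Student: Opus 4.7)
The plan is to construct $p$ and $q$ as compositions of canonical multiplication maps of ideals in $\wLa$, exploiting the commutativity $ww_0 = w_0w$ which holds for $w\in W_\Delta^\iota$. Indeed, subsection~3.2 asserts $w_0vw_0 = \iota(v)$ for all $v\in W_\Delta$, so $\iota(w) = w$ yields this commutativity. Setting $u := w^{-1}w_0 = w_0w^{-1}$ gives two reduced factorizations $w_0 = wu = uw$, i.e.\ $\ell(w_0) = \ell(w) + \ell(u)$ (computed in either $W_\Delta$ or $W_{\widetilde{\Delta}}$). Since $\widetilde{\Delta}$ is extended Dynkin, and hence non-Dynkin, the standard BIRS/IR1 result for preprojective algebras of non-Dynkin type implies that along reduced factorizations the multiplication-of-ideals maps
\[ m_{wu}\colon \I_w\Lwotimes\I_u \xrightarrow{\sim} \I_{w_0}, \qquad m_{uw}\colon \I_u\Lwotimes\I_w \xrightarrow{\sim} \I_{w_0} \]
are honest derived isomorphisms (the higher Tor vanishes).

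I would then define $p$ as the composite
\[ \I_w\Lwotimes\I_{w_0}\xrightarrow{\id\Lwotimes m_{uw}^{-1}}\I_w\Lwotimes\I_u\Lwotimes\I_w\xrightarrow{m_{wu}\Lwotimes\id}\I_{w_0}\Lwotimes\I_w, \]
which is an isomorphism because each factor is. For $q$ I take the canonical bimodule identification $\I_w\Lwotimes_{\wLa}\wLa\cong\I_w\cong\wLa\Lwotimes_{\wLa}\I_w$ given on representatives by $a\otimes b\mapsto ab$ on both sides.

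To verify the square, I would trace an element $a\otimes c \in \I_w\otimes\I_{w_0}$ where $c=xy$ with $x\in\I_u$, $y\in\I_w$. The top-right route $(\id\otimes\inc)$ followed by $q$ sends $a\otimes c \mapsto a\otimes c \mapsto 1\otimes ac \in \wLa\Lwotimes\I_w$, while the left-bottom route $p$ followed by $(\inc\otimes\id)$ sends $a\otimes c\mapsto a\otimes x\otimes y \mapsto ax\otimes y \mapsto ax\otimes y \in \wLa\Lwotimes\I_w$. These agree in $\wLa\Lwotimes_{\wLa}\I_w$ because $1\otimes ac = 1\otimes axy = ax\otimes y$; this element-level check lifts to the derived category since every map involved is induced by an honest $\wLa$-bimodule morphism, so it amounts to the associativity of multiplication. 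The hardest step will be justifying Step~2, namely pinning down the BIRS input that the multiplication maps along reduced factorizations are derived (not merely underived) isomorphisms; once this is accepted, the construction of $p$, $q$ and the commutativity of the square are essentially formal.
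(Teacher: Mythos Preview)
Your approach is essentially the same as the paper's: both exploit $ww_0=w_0w$ for $w\in W_\Delta^\iota$ to factor through the triple tensor $\I_w\Lwotimes\I_{w^{-1}w_0}\Lwotimes\I_w$ via the BIRS multiplication isomorphisms for reduced factorizations, and define $p$ and $q$ accordingly.

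The only difference is in how commutativity of the square is verified. The paper assembles explicit commutative squares from \cite[Propositions II.1.5(a), II.1.10]{BIRS} (which compare multiplication with $\inc\otimes\inc$), pastes them together inside $\wLa\Lwotimes\wLa\Lwotimes\wLa$, and then uses a short triangle argument---showing $\Hom(L,\wLa/\I_w[-1])=0$ from $H^{>0}(L)=0$---to deduce that a morphism $L\to\I_w$ is determined by its composite with $\inc$, so commutativity after inclusion forces commutativity before. You instead invoke Tor-vanishing along reduced factorizations to identify all the derived tensors with honest tensor products and then trace elements. Both routes are valid; yours is more concrete but requires you to nail down the Tor-vanishing input precisely (this is indeed in BIRS/IR1, as you anticipate), while the paper's stays inside the derived category and trades Tor-vanishing for the small cohomological cancellation argument. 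One minor point: in your element trace, a general $c\in\I_{w_0}=\I_u\I_w$ is a \emph{sum} of products $xy$, so you should invoke linearity explicitly.
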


\begin{proof}
Because $\ell(w_0w^{-1})+\ell(w)=\ell(w_0)$, \cite[Propositions II.1.5(a), II.1.10.]{BIRS} gives the 
following commutative diagram  

$$\xymatrix@C35pt@R15pt{\I_{w_0}\ar[r]^{\inc}&\wLa\ar[d]_{\cong}\\
\I_{w_0w^{-1}}\Lwotimes\I_{w}\ar[r]^{\inc\otimes\inc}\ar[u]^{\cong}&\wLa\Lwotimes\wLa,}$$  
and hence we have

$$\xymatrix@C35pt@R15pt{\I_{w}\Lwotimes\I_{w_0}\ar[r]^{\id\otimes\inc}&\I_{w}\Lwotimes\wLa\ar[r]^{\inc\otimes\id}&\wLa\Lwotimes\wLa\ar[d]_{\cong}\\
\I_{w}\Lwotimes\I_{w_0w^{-1}}\Lwotimes\I_{w}\ar[rr]^{\inc\otimes\inc\otimes\inc}\ar[u]^{\cong}&&\wLa\Lwotimes\wLa\Lwotimes\wLa.}$$

Since $w\in W_\Delta^\iota$, we have ${w}_0{w}={w}{w}_0$ (subsection \ref{relation}) and hence $\I_{w_0w^{-1}}=\I_{w^{-1}w_0}$. 
Then similarly we have the following commutative diagram

$$\xymatrix@C35pt@R15pt{\I_{w}\Lwotimes\I_{w^{-1}w_0}\Lwotimes\I_{w}\ar[rr]^{\inc\otimes\inc\otimes\inc}\ar[d]_{\cong}&&\wLa\Lwotimes\wLa\Lwotimes\wLa\\
\I_{w_0}\Lwotimes\I_{w}\ar[r]^{\inc\otimes\id}&\wLa\Lwotimes\I_{w}\ar[r]^{\id\otimes\inc}&\wLa\Lwotimes\wLa\ar[u]^{\cong}.}$$ 

Moreover we have the following commutative diagram

$$\xymatrix@C35pt@R15pt{\I_{w}\Lwotimes\wLa\ar[d]_{\cong}\ar[r]^{\inc\otimes\id}&\wLa\Lwotimes\wLa\ar[d]_{\cong}\\
\wLa\Lwotimes\I_w\Lwotimes\wLa\ar[r]^{\id\otimes\inc\otimes\id}&\wLa\Lwotimes\wLa\Lwotimes\wLa\\
\ar[u]^{\cong}\wLa\Lwotimes\I_w\ar[r]^{\id\otimes\inc}&\wLa\Lwotimes\wLa\ar[u]^{\cong}.}$$

Put $L:=\I_{w}\Lwotimes\I_{w^{-1}w_0}\Lwotimes\I_{w}$. Consider a morphism $u:L\to\I_w$ and 
the triangule $\xymatrix@C10pt@R10pt{
\cdots\ar[r]&\wLa/\I_w[-1]\ar[r]&\I_{w}\ar[r]^{\inc}& \wLa \ar[r]& \wLa/\I_w\ar[r]&\dots}.$
If $\inc\circ u=0$, then there exsits a map $v:L\to\wLa/\I_w[-1]$ which makes the diagram commutative. 
$$\xymatrix@C10pt@R20pt{&&L\ar[d]^{u}\ar@{-->}[ld]_{v}&&&\\
\cdots\ar[r]&\wLa/\I_w[-1]\ar[r]&\I_{w}\ar[r]^{\inc}& \wLa \ar[r]& \wLa/\I_w\ar[r]&\dots\ .}$$

Because $H^i(L)=0$ for any $i>0$, we get $v=0$ and hence $u=0$.
Thus the above diagrams provide required morphisms.
\end{proof}

From the above results, we have the following nice consequence.

\begin{proposition}\label{endo silt}
For any $w\in W_\Delta^\iota$, we have an isomorphism 
 $$\End_{\Kb(\proj\La)}(\I_{w}\Lwotimes\La)\cong\La.$$
In particular, the endomorphism algebra of any basic two-term tilting complex is isomorphic to $\La$.
\end{proposition}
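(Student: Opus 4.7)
The plan is to exhibit the functor $-\Lwotimes\I_w$ as a self-equivalence of $\Db(\mod\wLa)$ which, under the hypothesis $w\in W_{\Delta}^{\iota}$, descends to a self-equivalence of $\Db(\mod\La)$ sending $\La$ to $\I_w\Lwotimes\La$. Since any self-equivalence preserves endomorphism algebras, this immediately yields
\[
\End_{\Kb(\proj\La)}(\I_w\Lwotimes\La)\;\cong\;\End_{\Kb(\proj\La)}(\La)\;\cong\;\La.
\]

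For the descent, recall from BIRS that $\I_w$ is a tilting $\wLa$-module with $\End_{\wLa}(\I_w)\cong\wLa$, so $-\Lwotimes\I_w$ is a derived self-equivalence of $\Db(\mod\wLa)$. The hypothesis $w\in W_{\Delta}^{\iota}$ enters through Lemma \ref{longest}, which provides the commutation of $\I_w$ with the ideal defining $\La$ as a quotient of $\wLa$. Concretely, applying $\I_w\Lwotimes-$ and $-\Lwotimes\I_w$ to the triangle $J\to\wLa\to\La\to J[1]$ (where $J$ is the kernel of $\wLa\twoheadrightarrow\La$; by BIRS this is controlled by $\I_{w_0}$ for the longest element $w_0\in W_{\Delta}$) and invoking the commutative square of Lemma \ref{longest} yields a bimodule isomorphism $\I_w\Lwotimes\La\cong\La\Lwotimes\I_w$. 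This is precisely the compatibility needed for $-\Lwotimes\I_w$ to preserve the essential image of the restriction $\Db(\mod\La)\hookrightarrow\Db(\mod\wLa)$ and to send $\La$ there to $\I_w\Lwotimes\La$, which is identified with $S_w$ by Proposition \ref{left str}.

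For the ``in particular'' statement, Theorem \ref{action}(b) identifies every basic two-term tilting complex of $\La$ with some $S_w\cong\I_w\Lwotimes\La$, $w\in W_{\Delta}^{\iota}$, so the first claim applies. The main obstacle is promoting the bimodule isomorphism of Lemma \ref{longest} to a genuine descent of the self-equivalence and verifying that the resulting identification respects the algebra structure, not merely the module structure. Careful tracking of the naturality of the adjunctions and of the bimodule morphisms provided by Lemma \ref{longest}, together with the symmetric role played by $\I_w$ and its inverse tilting complex, is what upgrades the module-level computation to an algebra isomorphism.
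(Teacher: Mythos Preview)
Your proposal correctly identifies the key ingredient---the isomorphism $\I_w\Lwotimes\La\cong\La\Lwotimes\I_w$ obtained by applying $\I_w\Lwotimes-$ and $-\Lwotimes\I_w$ to the triangle $\I_{w_0}\to\wLa\to\La\to\I_{w_0}[1]$ and invoking Lemma~\ref{longest}---and this is exactly how the paper begins. The divergence, and the gap, is in how you use that isomorphism.

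Your ``descent'' framing does not go through as stated. The restriction functor $\Db(\mod\La)\to\Db(\mod\wLa)$ along the surjection $\wLa\twoheadrightarrow\La$ is \emph{not} fully faithful, so even if $-\Lwotimes\I_w$ preserved its essential image, that would tell you nothing about endomorphisms computed in $\Db(\mod\La)$. If instead you try to produce a self-equivalence of $\Db(\mod\La)$ directly---say by arguing that $\I_w\Lwotimes\La$, equipped with the left $\La$-action coming from $\La\Lwotimes\I_w$, is a two-sided tilting complex over $\La$---then you must verify that the left multiplication map $\La\to\End_{\Kb(\proj\La)}(\I_w\Lwotimes\La)$ is an isomorphism, which is precisely the claim you are trying to prove. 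You flag this as ``the main obstacle'' but your last paragraph (``careful tracking of naturality\dots'') does not resolve it; it only restates the difficulty.

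The paper avoids this circularity by a direct $\RHom$ computation rather than a descent argument. Once the isomorphism $r:\I_w\Lwotimes\La\xrightarrow{\sim}\La\Lwotimes\I_w$ is in hand, one computes
\[
\RHom_\La(\I_w\Lwotimes\La,\I_w\Lwotimes\La)\cong\RHom_{\wLa}(\I_w,\I_w\Lwotimes\La)\cong\RHom_{\wLa}(\I_w,\La\Lwotimes\I_w)\cong\La\Lwotimes\RHom_{\wLa}(\I_w,\I_w)\cong\La,
\]
using, in order: derived tensor--hom adjunction for the $(\wLa,\La)$-bimodule $\La$; the isomorphism $r$; perfectness of the tilting module $\I_w$ to pull $\La$ outside; and finally $\REnd_{\wLa}(\I_w)\cong\wLa$. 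Taking $H^0$ gives the statement. The crucial first step---adjunction---is what moves the computation back to $\wLa$, where $\I_w$ is a genuine two-sided tilting complex with known endomorphism ring; this is exactly the piece your descent sketch is missing.
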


\begin{proof}
Let $w_0$ be the longest element of $W_\Delta$. 
Since $\I_{w_0}=\langle e_0\rangle$, we have the following exact sequence
$$\xymatrix@C30pt@R30pt{0\ar[r]&\I_{w_0}\ar[r]^{}& \wLa \ar[r]& \La\ar[r]&0.}$$ 

Then applying the functor $\I_{w}\Lwotimes-$ to the exact sequence, we have the triangle 
$$\xymatrix@C30pt@R15pt{\I_{w}\Lwotimes\I_{w_0}\ar[r]^{}&\I_{w}\Lwotimes\La\ar[r]^{ }&\I_{w}\Lwotimes\wLa\ar[r]&\I_{w}\Lwotimes\I_{w_0}[1].}$$ 

Similarly, applying the functor $-\Lwotimes\I_{w}$ to the first exact sequence, we have the triangle 

$$\xymatrix@C30pt@R15pt{\I_{w_0}\Lwotimes\I_{w}\ar[r]^{}&\wLa\Lwotimes\I_{w}\ar[r]^{ }&\La\Lwotimes\I_{w}\ar[r]&\I_{w_0}\Lwotimes\I_{w}[1].}$$ 

By Lemma \ref{longest}, we have the 
following commutative diagram

$$\xymatrix@C25pt@R15pt{\I_{w}\Lwotimes\I_{w_0}\ar[r]^{}\ar[d]^p_{\cong}&\I_{w}\Lwotimes\wLa\ar[r]^{ }\ar[d]^q_{\cong}&\I_{w}\Lwotimes\La\ar[r]\ar@{.>}[d]^r&\I_{w}\Lwotimes\I_{w_0}[1]\ar[d]\\
\I_{w_0}\Lwotimes\I_{w}\ar[r]^{}&\wLa\Lwotimes\I_{w}\ar[r]^{ }&\La\Lwotimes\I_{w}\ar[r]&\I_{w_0}\Lwotimes\I_{w}[1],}$$ 
and the isomorphism $r$. 
Because $\I_w$ is a tilting module \cite[Theorem III.1.6]{BIRS} and we have $\wLa\cong\Hom_{\wLa}(\I_{w},\I_{w})$ \cite[Proposition II.1.4]{BIRS}, we obtain  
\begin{eqnarray*}
\RHom_{\La}(\I_{w}\Lwotimes\La,\I_{w}\Lwotimes\La)&\cong&\RHom_{\wLa}(\I_{w},\I_{w}\Lwotimes\La)\\ 
&\cong&\RHom_{\wLa}(\I_{w},\La\Lwotimes\I_{w})\\
&\cong&\La\Lwotimes\RHom_{\wLa}(\I_{w},\I_{w})\\
&\cong&\La\Lwotimes\wLa\\ 
&\cong&\La.
\end{eqnarray*}
Then by taking 0-th part, we get the assertion. 
The second statement immediately follows from the first one, Lemma \ref{action} and Proposition \ref{left str}.
\end{proof}

\begin{corollary}\label{cor end iso}
Let $T$ be a tilting complex which is given by iterated irreducible left tilting mutation from $\La$. 
Then we have $$\End_{\Kb(\proj\La)}(T)\cong\La.$$
\end{corollary}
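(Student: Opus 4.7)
The strategy is induction on the number of irreducible left tilting mutations required to pass from $\La$ to $T$. The base case $T=\La$ is tautological, and the one-mutation case falls under Proposition \ref{endo silt} because an irreducible left tilting mutation of $\La$ produces a two-term tilting complex (the defining approximation triangle is concentrated in degrees $-1$ and $0$). Suppose the statement holds for all tilting complexes obtainable from $\La$ by at most $k$ irreducible left tilting mutations, and let $T$ be such a complex with $T'$ an irreducible left tilting mutation of $T$.

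By the induction hypothesis, $\End_{\Kb(\proj\La)}(T)\cong\La$, so $T$ induces a triangle equivalence
\[
F:=\RHom_\La(T,-):\Db(\mod\La)\longrightarrow\Db(\mod\La)
\]
sending $T$ to the free module $\La$. Triangle equivalences preserve tilting complexes and send minimal left approximation triangles to minimal left approximation triangles; consequently they commute with irreducible left tilting mutation. Therefore $F(T')$ is obtained from $F(T)\cong\La$ by a single irreducible left tilting mutation, and is in particular a basic two-term tilting complex of $\La$.

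Proposition \ref{endo silt} then yields $\End_{\Kb(\proj\La)}(F(T'))\cong\La$, and since $F$ is an equivalence we conclude
\[
\End_{\Kb(\proj\La)}(T')\;\cong\;\End_{\Kb(\proj\La)}(F(T'))\;\cong\;\La,
\]
completing the induction. The essential content of the proof is the reduction to the two-term case already handled by Proposition \ref{endo silt}; the only point requiring care is the formal verification that derived self-equivalences of $\La$ transport irreducible left tilting mutations, which follows directly from the definition of mutation via approximation triangles.
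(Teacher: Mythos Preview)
Your proof is correct and follows essentially the same approach as the paper: induct on the number of mutations, use the derived equivalence induced by the tilting complex at the previous step to transport the next mutation back to a two-term tilting complex of $\La$, and invoke Proposition \ref{endo silt}. The paper's argument is slightly terser but the content is identical.
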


\begin{proof}
Let $T=\bmu^+_{(\ell)}\circ\cdots\circ\bmu^+_{(1)}(\La)$, where $\bmu$ denotes by irreducible left tilting mutation. 
We proceed by induction on $\ell$. 
Assume that, for $T'=\bmu^+_{(\ell-1)}\circ\cdots\circ\bmu^+_{(1)}(\La)$, 
we have $\End_{\Kb(\proj\La)}(T')\cong\La$. 
Then we have an equivalence $F:\Kb(\proj\La)\to\Kb(\proj\La)$ such that $F(T')\cong\La$ \cite{Ric}. 
Therefore we have $\End_{\Kb(\proj\La)}(\bmu^+_{(\ell)}(T'))
\cong\End_{\Kb(\proj\La)}(\bmu^+_{(\ell)}(\La))$ and hence it is isomorphic to $\La$ by Proposition \ref{endo silt}. 
\end{proof}

Now we give a proof of Theorem \ref{tilt discrete}.

\begin{proof}[Proof of Theorem \ref{tilt discrete}] 
(a) We will check the condition (c) of Corollary \ref{tilting-discrete}. 

Recall that  $\ttilt_T\La:=\{U\in\tilt\La\ |\ T\geq U\geq T[1]\}$.  We denote by $\sharp\ttilt_T\La$ the number of $\ttilt_T\La$. 

By Theorem \ref{action}, the set $\ttilt_{\La}\La=\ttilt\La$ is finite. 
Let $T$ be a tilting complex which is given by iterated irreducible left tilting mutation from $\La$. 
Then we have $\End_{\Kb(\proj\La)}(T)\cong\La$ from Corollary \ref{cor end iso}. 
Therefore, we have an equivalence $F:\Kb(\proj\La)\to\Kb(\proj\La)$ such that $F(T)\cong\La$ and hence   
we get $\sharp\{U\in\tilt\La\ |\ T\geq U\geq T[1]\}= 
\sharp\{F(U)\in\tilt\La\ |\ \La\geq F(U)\geq \La[1]\}$. Thus it is also finite and we obtain the  statement. 

(b) Let $T$ be a basic tilting complex such that $\La\geq T$. 
Since $\La$ is tilting-discrete, $T$ is obtained by iterated irreducible left tilting mutation from $\La$ \cite[Theorem 5.14]{CKL} (\cite[Theorem 3.5]{A}). 
Thus the statement follows from Corollary \ref{cor end iso}. 
Because for any tilting complex $T$, we have $\La\geq T[\ell]$ for some $\ell$ \cite[Proposition 2.4]{AI} and $\End_{\Kb(\proj\La)}(T)\cong\End_{\Kb(\proj\La)}(T[\ell])$, we get the conclusion from the above argument. 
\end{proof}


\section{Tilting complexes and braid groups}\label{end alge}

In this section, we show that irreducible mutation satisfy the braid relations and we give a bijective map from the elements of the braid group and the set of tilting complexes. 

We keep the notation of previous sections.

Define $W_\Delta^\iota=\langle t_i\ |\ i\in \Delta^{\rm f}_0\rangle$ as (\ref{fold}) of Theorem \ref{folding}. 
By Theorems \ref{tau-weyl} and \ref{action}, we have $S_{t_i}=\bmu_i^+(\La)$ ($i\in \Delta_0^{\rm f}$) in $\Db(\mod \La)$, where $\bmu_i^+$ is given as a composition of left silting mutation as follows 
\[\bmu_i^+:=\left\{\begin{array}{ll}
\ \mu_{i}^+ & \mbox{if  $i=\iota(i)$ in $\Delta$},\\
\ \mu_{i}^+\circ\mu_{{\iota(i)}}^+\circ\mu_{i}^+& \mbox{if there is an edge $i\stackrel{ }{\mbox{--}}\iota(i)$ in $\Delta$},\\
\ \mu_{i}^+\circ\mu_{{\iota(i)}}^+ & \mbox{if no edge between $i$ and $\iota(i)$ in $\Delta$}.\\ \end{array}\right.\]

Moreover, we let 
\[e_{t_i}:=\left\{\begin{array}{ll}
 e_i & \mbox{if  $i=\iota(i)$ in $\Delta$},\\
e_i+e_{\iota(i)} & \mbox{if  $i\neq\iota(i)$ in $\Delta$}.
\end{array}\right.\]

Then, it is easy to check that
$\bmu_i^+(\La)=\mu_{(e_{t_i}\La)}^+(\La)$ and hence we have 
$$S_{t_i}=\left\{\begin{array}{cccc}
\stackrel{-1}{ e_{t_i}\La}&\stackrel{f}{\longrightarrow}&
\stackrel{0}{R_{t_i}}\\
&\oplus&&\in\Kb(\proj\La)\\
&&(1-e_{t_i})\La
\end{array}\right.$$ 
where $f$ is a minimal left $(\add((1-e_{t_i})\La))$-approximation. 

Thus $\bmu_i^+$ is an irreducible left tilting mutation of $\La$ and any irreducible left tilting mutation of $\La$ is given as $\bmu_i^+$ for some $i\in \Delta_0^{\rm f}$. Dually, we define $\bmu_i^-$ so that $\bmu_i^-\circ\bmu_i^+=\id$ (\cite[Proposition 2.33]{AI}). 

Let $F_{\Delta^{\rm f}}$ be the free group generated by $a_i$ $(i\in \Delta_0^{\rm f})$. 
Then we define the map 
$$F_{\Delta^{\rm f}}\to\tilt\La,$$
$$a=a_{i_1}^{\epsilon_{i_1}}\cdots a_{i_k}^{\epsilon_{i_k}}\mapsto
\bmu_a(\La):=\bmu_{i_1}^{\epsilon_{i_1}}\circ\cdots \circ \bmu_{i_k}^{\epsilon_{i_k}}(\La).$$

Then we give the following proposition.

\begin{proposition}\label{braid relations}
For any $a\in F_{\Delta^{\rm f}}$, we let $T:=\bmu_a(\La)$.
Then we have the following braid relations in $\Db(\mod\La)$ 
\[\left\{\begin{array}{ll}
\ \bmu_i^+\circ\bmu_{j}^+(T) \cong\bmu_j^+\circ\bmu_{i}^+(T) & \mbox{if no edge between $i$ and $j$ in $\Delta^{\rm f}$},\\
\ \bmu_i^+\circ\bmu_{j}^+\circ \bmu_i^+(T)\cong\bmu_j^+\circ \bmu_i^+\circ\bmu_{j}^+(T) & \mbox{if there is an edge $i\stackrel{ }{\mbox{---}}j$ in $\Delta^{\rm f}$},\\
\ \bmu_i^+\circ\bmu_{j}^+\circ \bmu_i^+\circ\bmu_j^+(T)\cong \bmu_j^+\circ\bmu_i^+\circ\bmu_{j}^+\circ \bmu_i^+(T) & \mbox{if there is an edge $i\stackrel{4}{\mbox{---}}j$ in $\Delta^{\rm f}$}.\end{array}\right.\]
\end{proposition}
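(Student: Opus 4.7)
The plan is to first reduce to the case $T = \La$ via a derived equivalence, and then translate the braid relations in the Coxeter group $W_{\Delta^{\rm f}}$ into equalities of tilting complexes using the explicit description of $\bmu_i^+$ in terms of the simple silting mutations $\mu_j^+$.

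For the reduction step, I would invoke Theorem \ref{tilt discrete}(b): any basic tilting complex $T$ of $\La$ satisfies $\End_{\Kb(\proj\La)}(T) \cong \La$. The Rickard equivalence $F = \RHom_\La(T,-) : \Kb(\proj\La) \to \Kb(\proj\La)$ then satisfies $F(T) \cong \La$, and the chosen isomorphism $\End(T) \cong \La$ identifies each indecomposable summand of $T$ with the corresponding indecomposable projective summand of $\La$. Since irreducible tilting mutation is a categorical operation determined solely by the summand labeling, $F$ intertwines $\bmu_i^+$ acting on $T$ with $\bmu_i^+$ acting on $\La$. Hence any braid relation for $T$ follows from the corresponding braid relation for $\La$.

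For the case $T = \La$, the key identification to establish is
$$\bmu_{i_1}^+ \circ \bmu_{i_2}^+ \circ \cdots \circ \bmu_{i_k}^+(\La) \;\cong\; S_{t_{i_1} t_{i_2} \cdots t_{i_k}}$$
in $\tsilt\La$, for any indices $i_1,\ldots,i_k \in \Delta^{\rm f}_0$. I would prove this by induction on $k$: the inductive step combines the definition of each $\bmu_i^+$ as the prescribed composition of $\mu_j^+$'s (matching the expansion of $t_i$ in terms of $s_j$'s given by Theorem \ref{folding}) with the formula $\mu_j^+(S_v) \cong S_{s_j v}$ from Theorem \ref{tau-weyl}(c). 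Since all intermediate terms are two-term silting complexes indexed by elements of $W_\Delta$, Theorem \ref{tau-weyl}(c) applies at every step. Because the generators $t_i$ of $W_{\Delta^{\rm f}}$ satisfy the ordinary Coxeter braid relations $\underbrace{t_i t_j t_i \cdots}_{m(i,j)} = \underbrace{t_j t_i t_j \cdots}_{m(i,j)}$ with two, three, or four factors on each side according to the label of the edge in $\Delta^{\rm f}$, and because the map $W_\Delta \to \tsilt\La$ from Theorem \ref{tau-weyl}(b) is a bijection, these group-theoretic identities translate directly into the desired isomorphisms of tilting complexes.

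The main obstacle I anticipate is making the reduction step precise: one must fix a convention for labeling the indecomposable summands of a general tilting complex $T$ so that the operation $\bmu_i^+$ is unambiguously defined on $T$, and then verify that the Rickard equivalence $F$ respects this labeling. Once this is settled via the canonical identification $\End(T) \cong \La$ induced by $F$, the remaining arguments amount to routine bookkeeping with the explicit formulas for $\bmu_i^+$ and the Weyl group action on two-term silting complexes.
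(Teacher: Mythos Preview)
Your proposal is correct and follows the paper's proof essentially step for step: the paper reduces to $T=\La$ via Theorem \ref{tilt discrete} and the resulting derived autoequivalence $F$ with $F(T)\cong\La$, and handles the base case by citing Theorem \ref{action} (which packages exactly the Weyl-group computation you spell out through Theorem \ref{tau-weyl}(c)). Your discussion of the labeling obstacle is a point the paper simply absorbs into the phrase ``mutation is preserved by an equivalence.''
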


\begin{proof}
By Theorem \ref{action}, the assertion holds for $T=\La$. 
Moreover, 
by Theorem \ref{tilt discrete},  $T$ satisfies $\End_{\Kb(\proj\La)}(T)\cong\La$ and hence we have an equivalence $F:\Kb(\proj\La)\to\Kb(\proj\La)$ such that $F(T)\cong\La$. Since mutation is preserved by 
an equivalence, the assertion holds for $T$.
\end{proof}

Now we recall the following definition. 

\begin{definition}\label{braid def}
The braid group $B_{\Delta^{\rm f}}$ is defined by generators $a_i$ $(i\in \Delta^{\rm f}_0)$ and relations 
$(a_ia_j)^{m(i,j)}=1$ for $i\neq j$  (i.e. the difference with $W_{\Delta^{\rm f}}$ is that we do not require the relations $a_i^2=1$ for $i\in \Delta^{\rm f}_0$).  
Moreover we denote the positive braid monoid by $B^+_{\Delta^{\rm f}}$. 
\end{definition}

As a consequence of the above results, we have the following proposition.

\begin{proposition}\label{braid map}
There is a map 
$$B_{\Delta^{\rm f}}\to\tilt\La,\ a\mapsto \bmu_a(\La).$$ 
Moreover, it is surjective.
\end{proposition}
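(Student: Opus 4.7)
The plan is to prove the proposition in two parts: first, that the assignment $a \mapsto \bmu_a(\La)$ on words descends to a well-defined map from $B_{\Delta^{\rm f}}$, and second, that this map hits every basic tilting complex. Both parts should fall out cleanly from the results already established, notably Proposition \ref{braid relations} and Theorem \ref{tilt discrete}.

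For well-definedness, I would use the fact that two words $a, a' \in F_{\Delta^{\rm f}}$ represent the same element of $B_{\Delta^{\rm f}}$ precisely when they are connected by a finite sequence of elementary moves: either a cancellation/insertion of $a_i a_i^{-1}$ (or $a_i^{-1} a_i$), or a replacement of a subword $v$ that is one side of a braid relation by the other side $v'$. The cancellation case is immediate from $\bmu_i^+ \circ \bmu_i^- = \bmu_i^- \circ \bmu_i^+ = \id$, which is built into the definition of $\bmu_i^-$. For the braid case, write $a = u \cdot v \cdot w$ and $a' = u \cdot v' \cdot w$ and set $T := \bmu_w(\La)$. Then $T$ is itself of the form $\bmu_b(\La)$ for $b \in F_{\Delta^{\rm f}}$, so Proposition \ref{braid relations} applies directly to give $\bmu_v(T) \cong \bmu_{v'}(T)$ in $\Db(\mod\La)$; applying $\bmu_u$ to both sides yields $\bmu_a(\La) \cong \bmu_{a'}(\La)$. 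Since equal elements of $B_{\Delta^{\rm f}}$ are joined by finitely many such moves, the induced map is well defined.

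For surjectivity, I would invoke Theorem \ref{tilt discrete}(a), which asserts that $\Kb(\proj\La)$ is tilting-discrete, together with the reachability property of tilting-discrete categories (the tilting analogue of \cite[Corollary 3.9]{A} cited after Definition \ref{defsm}): every basic tilting complex $T$ lies at the end of a chain $\La = T_0, T_1, \ldots, T_k = T$ in which each $T_{j+1}$ is an irreducible left or right tilting mutation of $T_j$. It then remains to identify each step with some $\bmu_i^\pm$. For $j = 0$, this is the explicit description of irreducible tilting mutations of $\La$ recorded immediately before Proposition \ref{braid relations}. For $j \geq 1$, Theorem \ref{tilt discrete}(b) gives $\End_{\Kb(\proj\La)}(T_j) \cong \La$, so there is an auto-equivalence $F$ of $\Kb(\proj\La)$ sending $T_j$ to $\La$; because equivalences preserve (irreducible) tilting mutation, the irreducible tilting mutations of $T_j$ correspond bijectively to those of $\La$ and are therefore again of the form $\bmu_i^\pm$.

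The essential inputs—Proposition \ref{braid relations} and Theorem \ref{tilt discrete}—are both already in hand, so no genuinely new obstacle remains for this proposition. The one subtlety to keep in mind is that each $\bmu_i^+$ is in general a composition of up to three elementary silting mutations, so when transporting braid moves through the equivalence $F$ in the surjectivity step one must rely on the \emph{preprojective-algebra-specific} description of irreducible tilting mutations rather than on the general silting mutation theory; this is precisely why the reduction $\End_{\Kb(\proj\La)}(T_j) \cong \La$ supplied by Theorem \ref{tilt discrete}(b) is indispensable.
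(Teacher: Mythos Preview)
Your proposal is correct and follows essentially the same route as the paper: well-definedness via Proposition~\ref{braid relations}, and surjectivity via tilting-discreteness (Theorem~\ref{tilt discrete}) together with the reachability result \cite[Theorem 5.14]{CKL}. You spell out more carefully than the paper does why the irreducible tilting mutations of each intermediate $T_j$ are again of the form $\bmu_i^\pm$ (using $\End_{\Kb(\proj\La)}(T_j)\cong\La$), a point the paper leaves implicit.
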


\begin{proof}
The first statement follows from Proposition \ref{braid relations}. 
Since $\La$ is tilting-discrete, any tilting complex can be obtain from $\La$ by iterated irreducible tilting mutation (\cite[Theorem 5.14]{CKL}, \cite[Theorem 3.5]{AI}). 
Thus the map is surjective.
\end{proof}


Finally, we will show that the map of Proposition \ref{braid map} is injective. 

Recall that $T>\bmu_a(T)$ for any $a\in B^+_{\Delta^{\rm f}}$ (Definition \ref{defsm}). 
Then we have the following result.

\begin{lemma}\label{braid inj}
The map  
$$B_{\Delta^{\rm f}}^+\to\tilt\La,\ a\mapsto \bmu_a(\La)$$
is injective. 
\end{lemma}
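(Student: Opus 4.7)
The plan is to induct on $\ell(a) + \ell(b)$, where $\ell$ denotes the common length of any positive word representing an element of $B^+_{\Delta^{\rm f}}$ (this is a well-defined invariant because $B^+_{\Delta^{\rm f}}$ is an Artin-Tits monoid of finite type, so all positive expressions of a given element have the same length). The base case is immediate: if $\ell(a) = 0$ then $\bmu_a(\La) = \La$, and the equality $\bmu_b(\La) = \La$ forces $\ell(b) = 0$ as well, since any positive-length word produces a tilting complex strictly below $\La$ in the silting order (each $\bmu_i^+$ is an irreducible left tilting mutation, and such mutations strictly decrease in the partial order).

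For the inductive step, I would assume $\ell(a), \ell(b) \geq 1$ with $T := \bmu_a(\La) \cong \bmu_b(\La)$, and choose any left-divisor $a_i$ of $a$, so that $a = a_i a'$ with $a' \in B^+_{\Delta^{\rm f}}$. Then $T = \bmu_i^+(\bmu_{a'}(\La))$, whence $\bmu_i^-(T) = \bmu_{a'}(\La)$, and in particular $\bmu_i^-(T) \leq \La$. The crucial intermediate step is the intrinsic characterization: \emph{for $c \in B^+_{\Delta^{\rm f}}$ and $j \in \Delta^{\rm f}_0$, $a_j$ is a left-divisor of $c$ if and only if $\bmu_j^-(\bmu_c(\La)) \leq \La$}. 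Granting this characterization, the sets of left-divisors of $a$ and $b$ depend only on $T$ and thus coincide, so $a_i$ divides $b$ as well: writing $b = a_i b'$ and applying $\bmu_i^-$ to $\bmu_a(\La) = \bmu_b(\La)$ gives $\bmu_{a'}(\La) = \bmu_{b'}(\La)$ with $\ell(a') + \ell(b') < \ell(a) + \ell(b)$, and induction yields $a' = b'$, hence $a = b$.

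The main obstacle is the nontrivial direction of the characterization, namely passing from $\bmu_j^-(\bmu_c(\La)) \leq \La$ to the divisibility of $c$ by $a_j$. The idea is to establish the auxiliary surjectivity statement that every tilting complex $U$ with $U \leq \La$ is of the form $\bmu_d(\La)$ for some $d \in B^+_{\Delta^{\rm f}}$, which follows by inducting on the depth of $U$ below $\La$ in the silting quiver (using tilting-discreteness from Theorem \ref{tilt discrete} together with the fact that each immediate predecessor of $\La$ in the down-set of the silting order is of the form $\bmu_j^+(\La)$ for some $j$, and the inductive application then produces a positive word reaching $U$). Applying this to $\bmu_j^-(\bmu_c(\La))$ yields an equality $\bmu_c(\La) = \bmu_{a_j d}(\La)$; by structuring the whole argument as a single induction on $\ell(c)$ in which the main injectivity claim and the characterization are proved in lockstep, the inductive hypothesis then forces $c = a_j d$, so $a_j$ is indeed a left-divisor of $c$, completing the circle.
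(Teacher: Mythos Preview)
Your inductive scheme has a genuine gap at the ``lockstep'' step, and it is precisely the point where the paper's argument diverges from yours.

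In your backward direction of the characterization, you produce $d\in B^+_{\Delta^{\rm f}}$ with $\bmu_d(\La)=\bmu_j^-(\bmu_c(\La))$, hence $\bmu_c(\La)\cong\bmu_{a_jd}(\La)$, and you then want the inductive hypothesis to force $c=a_jd$. But nothing bounds $\ell(d)$: the surjectivity coming from tilting-discreteness only hands you \emph{some} positive word reaching $\bmu_j^-(T)$, with no control on its length. So whatever your induction parameter is ($\ell(a)+\ell(b)$, or $\ell(c)$, or $\max$), the pair $(c,a_jd)$ need not lie below the current level, and the hypothesis does not apply. If instead you try to use the specific $d=a'$ (from $a=a_ia'$), you only recover the tautology $\bmu_a(\La)\cong\bmu_b(\La)$ and make no progress towards $a_i\mid b$. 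The circle does not close.

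The paper avoids this by working with \emph{right}-divisors rather than left: writing $b=b'a_i$ and $c=c'a_j$, the last letters $a_i,a_j$ correspond to the \emph{first} mutations applied to $\La$, so $\bmu_{a_i}(\La)$ and $\bmu_{a_j}(\La)$ lie in $\ttilt\La$. There one has the Weyl-group lattice structure (Theorem~\ref{action}, \cite[Theorem 2.30]{M1}, \cite[Corollary 3.9]{AIR}), and the meet of $\bmu_{a_i}(\La)$ and $\bmu_{a_j}(\La)$ is exactly $\bmu_{a_{i,j}}(\La)$ for the explicit braid word $a_{i,j}$. Since this meet dominates $T$, tilting-discreteness gives $d$ with $\bmu_{da_{i,j}}(\La)\cong T$; crucially, one then compares $da_{i,j}a_i^{-1}$ (which is still positive by the braid relations) with $b'$, and the induction applies simply because $\ell(b')=\ell(b)-1$, \emph{regardless of $\ell(d)$}. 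That is the trick your approach is missing: the meet in $\ttilt\La$ lets the paper sidestep any need to bound the auxiliary word.
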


\begin{proof}
We denote by $\ell(a)$ the length of $a\in B_{\Delta^{\rm f}}^+$, that is, the number of elements of the expression $a$.
We show by induction on the length of $B_{\Delta^{\rm f}}^+$.
Take $b,c\in B_{\Delta^{\rm f}}^+$ such that $\bmu_b(\La)\cong \bmu_c(\La)$ in $\Db(\mod\La)$. 
Without loss of generality, we can assume that $\ell(b)\le\ell(c)$.

If $\ell(b)=0$, (or equivalently, $b=\id$), then $\bmu_b(\La)=\La$. 
Then we have $c=\id$ because otherwise $\La>\bmu_c(\La)$. 

Next assume that $\ell(b)>0$ and the statement holds for any element if the length is less than  
$\ell(b)$. 
We write 
$b=b'a_i$ and $c=c'a_j$ for some $b',c'\in B_{\Delta^{\rm f}}^+$ and $i,j\in \Delta_0^{\rm f}$. 
If $i=j$, then $\bmu_{b'}(\La)\cong \bmu_{c'}(\La)$ and the induction 
hypothesis implies that $b'=c'$ and hence $b=c$. 

Hence assume that $i\neq j$. 
Then we define 
\[a_{i,j}:=
\left\{\begin{array}{ll}
\ a_ia_j & \mbox{if no edge between $i$ and $j$ in $\Delta^{\rm f}$},\\
\ a_ia_ja_i & \mbox{if there is an edge $i\stackrel{ }{\mbox{---}}j$ in $\Delta^{\rm f}$},\\
\  a_ia_ja_ia_j & \mbox{if there is an edge $i\stackrel{4}{\mbox{---}}j$ in $\Delta^{\rm f}$}.\end{array}\right.\]

Then $\bmu_{a_{i,j}}(\La)$ is a meet of $\bmu_{a_i}(\La)$ and $\bmu_{a_j}(\La)$ by Theorem \ref{action}, \cite[Theorem 2.30]{M1} and \cite[Corollary 3.9]{AIR}. 
Therefore we get $\bmu_{a_{i,j}}(\La)\geq \bmu_b(\La)$ since $\bmu_{a_i}(\La)\geq \bmu_b(\La)$ and $\bmu_{a_j}(\La)\geq \bmu_c(\La)\cong \bmu_b(\La)$. 

Because $\La$ is tilting-discrete and $\La>\bmu_{a_{i,j}}(\La)$, 
there exists $d\in B_{\Delta^{\rm f}}^+$ such that 
$\bmu_d(\bmu_{a_{i,j}}(\La))= \bmu_{da_{i,j}}(\La)\cong \bmu_b(\La)$. 
Then we have $\bmu_{da_{i,j}a_i^{-1}}(\La)\cong \bmu_{b'}(\La)$. 
Since we have $da_{i,j}a_i^{-1}\in B_{\Delta^{\rm f}}^+$, the induction
hypothesis implies that $da_{i,j}a_i^{-1}=b'$ and hence $da_{i,j}=b$. 
Similarly, we have $\bmu_{da_{i,j}a_j^{-1}}(\La)\cong \bmu_{c'}(\La)$ and hence we get 
$da_{i,j}a_j^{-1}=c'$. 
Therefore, we get $b=da_{i,j}=c'a_j=c$ and the assertion holds.
\end{proof}

As an immediate consequence, we obtain the following result (c.f. \cite[Lemma 2.3]{BT}).

\begin{proposition}\label{faithful}
The map  
$$B_{\Delta^{\rm f}}\to\tilt\La,\ a\mapsto \bmu_a(\La)$$
is injective. 
\end{proposition}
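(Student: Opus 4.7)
The plan is to reduce injectivity on the full braid group $B_{\Delta^{\rm f}}$ to injectivity on the positive braid monoid $B^+_{\Delta^{\rm f}}$, which was already established in Lemma \ref{braid inj}, using the Garside / Ore property of Artin--Tits monoids of finite type.

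First I would recall the following classical fact (Brieskorn--Saito, Deligne): since $\Delta^{\rm f}$ is again a Dynkin diagram, the natural map $B^+_{\Delta^{\rm f}} \to B_{\Delta^{\rm f}}$ is injective, and any element of $B_{\Delta^{\rm f}}$ can be written in the form $ab^{-1}$ with $a,b \in B^+_{\Delta^{\rm f}}$. Thus it suffices to check that $\bmu_a(\La)\cong\bmu_b(\La)$ for $a,b\in B^+_{\Delta^{\rm f}}$ implies $a=b$ (which is exactly Lemma \ref{braid inj}), provided we can reduce an arbitrary equation in $B_{\Delta^{\rm f}}$ to a comparison of positive elements acting on $\La$.

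Now suppose $x,y\in B_{\Delta^{\rm f}}$ satisfy $\bmu_x(\La)\cong\bmu_y(\La)$ in $\Db(\mod\La)$. Write $xy^{-1}=ab^{-1}$ with $a,b\in B^+_{\Delta^{\rm f}}$, and set $T:=\bmu_y(\La)$ and $T':=\bmu_{b^{-1}}(T)$. Then $T'\in\tilt\La$ and the assumption translates into $\bmu_a(T')\cong\bmu_b(T')$. Here I would invoke Theorem \ref{tilt discrete}: since $T'$ is a tilting complex of $\La$, we have $\End_{\Kb(\proj\La)}(T')\cong\La$, so by \cite{Ric} there is an autoequivalence $F:\Kb(\proj\La)\to\Kb(\proj\La)$ with $F(T')\cong\La$. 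As in the proof of Proposition \ref{braid relations}, tilting mutation is preserved by equivalences, so applying $F$ gives $\bmu_a(\La)\cong\bmu_b(\La)$. Then Lemma \ref{braid inj} forces $a=b$ in $B^+_{\Delta^{\rm f}}$, whence $xy^{-1}=\id$ and $x=y$.

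The main obstacle is conceptual rather than technical: one must be comfortable identifying $\bmu_i^{\pm}$ applied to a general tilting complex $T$ with $\bmu_i^{\pm}$ applied to $\La$ via the chosen equivalence $F$. This is legitimate precisely because mutation is defined intrinsically from triangles and minimal approximations, so any triangle equivalence transports the mutation at the $i$-th summand to the mutation at the image summand; the isomorphism $\End(T')\cong\La$ pins down the indexing of summands in the correct way. Everything else is bookkeeping inside the Ore / Garside framework, which is standard for Artin--Tits groups of finite type and therefore available for every $\Delta^{\rm f}$ appearing in the table of Theorem \ref{folding}.
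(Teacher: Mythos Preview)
Your argument is correct and follows the same strategy as the paper: reduce to the positive monoid via the Garside/Ore fraction form and then invoke Lemma~\ref{braid inj}. The paper's version is marginally more direct: it first reduces injectivity to the statement $\bmu_a(\La)\cong\La\Rightarrow a=\id$ and then writes $a=b^{-1}c$ with $b,c\in B^+_{\Delta^{\rm f}}$, so that $\bmu_{b^{-1}c}(\La)\cong\La$ immediately yields $\bmu_b(\La)\cong\bmu_c(\La)$ already at $\La$, bypassing the detour through a general $T'$ and the auxiliary autoequivalence $F$ that your right-fraction decomposition $xy^{-1}=ab^{-1}$ forces.
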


\begin{proof}
It is enough to show that $\bmu_a(\La)\cong \La$ in $\Db(\mod\La)$ implies 
$a=\id$. 
In fact, $\bmu_a(\La)\cong \bmu_{a'}(\La)$ implies $\bmu_{a{a'}^{-1}}(\La)\cong\La$. Then if $a{a'}^{-1}=\id$, then we get $a=a'$. 

It is well-known that any element $a\in B_{\Delta^{\rm f}}$ is given as $a=b^{-1}c$ for some $b,c\in B_{\Delta^{\rm f}}^+$ \cite[section 6.6]{KT}. 
Hence, $\bmu_a(\La)\cong \La$ is equivalent to saying that $\bmu_{b^{-1}c}(\La)\cong \La$. 
Then we have $\bmu_b(\La)\cong \bmu_c(\La)$ and Lemma \ref{braid inj} implies $b=c$. 
Thus we get the assertion.
\end{proof}

Consequently, we obtain the following conclusion.

\begin{theorem}\label{braid action}
There is a bijection 
$$B_{\Delta^{\rm f}}\to\tilt\La,\ a\mapsto \bmu_a(\La).$$
\end{theorem}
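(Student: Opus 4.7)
The plan is to assemble the theorem directly from the two pieces already established in the paper, namely the surjectivity part of Proposition \ref{braid map} and the injectivity statement of Proposition \ref{faithful}, so essentially no new work remains.

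First I would note that the map $a \mapsto \bmu_a(\La)$ is well-defined on $B_{\Delta^{\rm f}}$: this is exactly Proposition \ref{braid map}, which relies on the braid relations for irreducible tilting mutations (Proposition \ref{braid relations}). Surjectivity is also contained in Proposition \ref{braid map}, and the argument there uses tilting-discreteness of $\Kb(\proj\La)$ (Theorem \ref{tilt discrete}(a)), together with the fact that in a tilting-discrete category any tilting complex can be reached from $\La$ by iterated irreducible tilting mutations. Since every irreducible left tilting mutation of $\La$ equals $\bmu_i^+$ for some $i \in \Delta_0^{\rm f}$, any tilting complex lies in the image.

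Next, injectivity follows from Proposition \ref{faithful}. That statement was in turn reduced to Lemma \ref{braid inj}, which handles the case of the positive braid monoid $B_{\Delta^{\rm f}}^+$ by induction on length, using the meet description of $\bmu_{a_{i,j}}(\La)$ provided by Theorem \ref{action} combined with \cite[Theorem 2.30]{M1} and \cite[Corollary 3.9]{AIR}, together with the strict inequality $T > \bmu_a(T)$ for $a \in B_{\Delta^{\rm f}}^+$. The passage from $B_{\Delta^{\rm f}}^+$ to all of $B_{\Delta^{\rm f}}$ uses the standard fact that every braid $a \in B_{\Delta^{\rm f}}$ can be written as $b^{-1}c$ with $b,c \in B_{\Delta^{\rm f}}^+$.

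Thus the proof of Theorem \ref{braid action} reduces to a one-line assembly: combine Proposition \ref{braid map} (well-defined and surjective) with Proposition \ref{faithful} (injective). There is no real obstacle remaining at this stage; all the substantive work, in particular the verification of braid relations (which required the endomorphism-algebra computation $\End_{\Kb(\proj\La)}(T) \cong \La$ of Theorem \ref{tilt discrete}(b)) and the inductive injectivity argument on $B_{\Delta^{\rm f}}^+$, has already been carried out in the preceding lemmas and propositions.
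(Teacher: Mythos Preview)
Your proposal is correct and matches the paper's own proof exactly: the paper's proof of Theorem~\ref{braid action} is the single sentence ``The statement follows from Propositions~\ref{braid map} and~\ref{faithful}.'' Your additional commentary on what feeds into those propositions is accurate but not needed at this point, since all of it has already been established earlier in the paper.
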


\begin{proof}
The statement follows from Propositions \ref{braid map} and \ref{faithful}.
\end{proof}


\end{document}